\newtheorem{theorem}{Theorem}[section]
\newtheorem{lemma}[theorem]{Lemma}
\newtheorem{corollary}[theorem]{Corollary}
\newtheorem{proposition}[theorem]{Proposition}
\theoremstyle{definition}
\newtheorem{definition}[theorem]{Definition}
\theoremstyle{remark}
\newtheorem{remark}[theorem]{Remark}
\numberwithin{equation}{section}
\begin{document}

\title{Brieskorn spheres, cyclic group actions and the Milnor conjecture}

\author{David Baraglia}
\address{School of Computer and Mathematical Sciences, The University of Adelaide, Adelaide SA 5005, Australia}
\email{david.baraglia@adelaide.edu.au}

\author{Pedram Hekmati}
\address{Department of Mathematics, The University of Auckland, Auckland, 1010, New Zealand}
\email{p.hekmati@auckland.ac.nz}


\date{\today}

\begin{abstract}
In this paper we further develop the theory of equivariant Seiberg--Witten--Floer cohomology of the two authors, with an emphasis on Brieskorn homology spheres. We obtain a number of applications. First, we show that the knot concordance invariants $\theta^{(c)}$ defined by the first author satisfy $\theta^{(c)}(T_{a,b}) = (a-1)(b-1)/2$ for torus knots, whenever $c$ is a prime not dividing $ab$. Since $\theta^{(c)}$ is a lower bound for the slice genus, this gives a new proof of the Milnor conjecture. Second, we prove that a free cyclic group action on a Brieskorn homology $3$-sphere $Y = \Sigma(a_1 , \dots , a_r)$ does not extend smoothly to any homology $4$-ball bounding $Y$. In the case of a non-free cyclic group action of prime order, we prove that if the rank of $HF_{red}^+(Y)$ is greater than $p$ times the rank of $HF_{red}^+(Y/\mathbb{Z}_p)$, then the $\mathbb{Z}_p$-action on $Y$ does not extend smoothly to any homology $4$-ball bounding $Y$. Third, we prove that for all but finitely many primes a similar non-extension result holds in the case that the bounding $4$-manifold has positive definite intersection form. Finally, we also prove non-extension results for equivariant connected sums of Brieskorn homology spheres.
\end{abstract}

\maketitle


\section{Introduction}

In \cite{bh}, we introduced the theory of equivariant Seiberg--Witten--Floer cohomology and established its basic properties. In this paper we further develop this theory, with a particular emphasis on Brieskorn homology spheres. Applications include a new proof of the Milnor conjecture and obstructions to extending group actions over a bounding $4$-manifold.

For pairwise coprime positive integers $a_1 , \dots , a_r > 1$, the Brieskorn manifold $Y = \Sigma(a_1 , \dots , a_r)$ is an integral homology Seifert $3$-manifold. The Seifert structure defines a circle action on $Y$. Restricting the circle action to finite subgroups, we obtain an action of the cyclic group $\mathbb{Z}_p$ on $Y$ for each integer $p > 1$. We obtain our main results by considering the equivariant Seiberg--Witten--Floer cohomology of $Y$ with respect to such $\mathbb{Z}_p$-actions.

\subsection{Knot concordance invariants and the Milnor conjecture}

Let $p$ be a prime number. One way of producing $\mathbb{Z}_p$-actions on rational homology $3$-spheres is to take $Y = \Sigma_p(K)$, the cyclic $p$-fold cover of $S^3$ branched over a knot $K$. From the equivariant Seiberg--Witten--Floer cohomology of $Y$ one may extract invariants of the knot $K$. In \cite{bar}, this construction was used to obtain a series of knot concordance invariants $\theta^{(p)}(K)$. These invariants are lower bounds for the slice genus $g_4(K)$, that is, $g_4(K) \ge \theta^{(p)}(K)$ for all primes $p$. More generally, the invariants $\theta^{(p)}$ can be used to bound the genus of surfaces bounding $K$ in negative definite $4$-manifolds with $S^3$ boundary.

We are interested in the case that $K$ is a torus knot. If $K = T_{a,b}$ is an $(a,b)$ torus knot and $c$ is a prime not dividing $ab$, then $\Sigma_c(T_{a,b})$ is the Brieskorn homology sphere $\Sigma(a,b,c)$ and the $\mathbb{Z}_c$-action arising from the branched covering construction coincides with the restriction to $\mathbb{Z}_c$ of the Seifert circle action. By studying the equivariant Seiberg--Witten--Floer homology of $\Sigma(a,b,c)$, we deduce the following:

\begin{theorem}\label{thm:theta1}
Let $a,b > 1$ be coprime integers and let $c$ be a prime not dividing $ab$. Then $\theta^{(c)}(T_{a,b}) = \dfrac{1}{2}(a-1)(b-1)$.
\end{theorem}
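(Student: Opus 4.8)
The plan is to establish the two bounds $\theta^{(c)}(T_{a,b}) \le \frac{1}{2}(a-1)(b-1)$ and $\theta^{(c)}(T_{a,b}) \ge \frac{1}{2}(a-1)(b-1)$ by completely different arguments. The upper bound is elementary: Seifert's algorithm applied to the standard diagram of $T_{a,b}$ produces a spanning surface of genus $\frac{1}{2}(a-1)(b-1)$, and pushing its interior into $B^4$ yields a properly embedded surface of that genus bounding $T_{a,b}$; since $\theta^{(c)}$ is a lower bound for the slice genus, this already gives $\theta^{(c)}(T_{a,b}) \le g_4(T_{a,b}) \le \frac{1}{2}(a-1)(b-1)$. (Note that once the reverse inequality is known these estimates collapse to $g_4(T_{a,b}) = \frac{1}{2}(a-1)(b-1)$, which is the Milnor conjecture.) All of the substance is therefore in the lower bound, and for this I would compute the $\mathbb{Z}_c$-equivariant Seiberg--Witten--Floer cohomology of $\Sigma_c(T_{a,b}) = \Sigma(a,b,c)$, equipped with its $\mathbb{Z}_c$-action, and read off $\theta^{(c)}$ from the definition in \cite{bar}.

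The feature that makes this feasible, and which is special to torus knots, is that the $\mathbb{Z}_c$-action on $\Sigma(a,b,c)$ is the restriction to $\mathbb{Z}_c \subset S^1$ of the Seifert circle action --- with fixed-point set the exceptional fibre of order $c$, mapping down to the branch knot in $S^3$ --- and that $c$ is prime, so the equivariant theory of \cite{bh} applies directly. Because the entire set-up is $S^1$-invariant, the Seiberg--Witten moduli spaces of $\Sigma(a,b,c)$ are, after a suitable perturbation, governed by the Mrowka--Ozsv\'ath--Yu description: the reducible together with a finite set of $S^1$-invariant irreducible solutions indexed by certain effective divisors on the base orbifold $S^2(a,b,c)$ (the sphere with three cone points of orders $a,b,c$). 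The $\mathbb{Z}_c$-action fixes each of these critical points, so the nontrivial equivariant information sits in the $\mathbb{Z}_c$-representations carried by the spinor and index data and in the weights occurring in the finite-dimensional approximation spheres. Following these through the equivariant Conley-index construction, one obtains the $\mathbb{Z}_c$-equivariant SWF cohomology of $\Sigma(a,b,c)$ as an explicit $H^*_{\mathbb{Z}_c}(\mathrm{pt})$-module, whose infinite $U$-tower begins in a grading determined by $d(\Sigma(a,b,c))$ together with Dedekind-type contributions from the three exceptional fibres --- equivalently, by the counting function of the Seifert data in the form computed by N\'emethi.

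It then remains to identify the resulting value of $\theta^{(c)}$, a normalised grading of a distinguished class in the localised equivariant cohomology, with $\frac{1}{2}(a-1)(b-1)$. I would reduce this to a closed-form evaluation of the combination of lattice-point counts attached to $(a,b,c)$ that the previous step produces. The most transparent route is to recognise that combination as the number of gaps of the numerical semigroup $\langle a,b\rangle \subset \mathbb{Z}_{\ge 0}$ associated to the plane-curve singularity $x^a+y^b$: the equivariant $U$-tower turns out to begin exactly at the grading recording $\#\bigl(\mathbb{Z}_{\ge 0}\setminus\langle a,b\rangle\bigr)$, and the classical symmetry of this semigroup about its Frobenius number $ab-a-b$ evaluates that count to $\frac{1}{2}(a-1)(b-1)$. (The same value appears geometrically: the Milnor fibre of $x^a+y^b+z^c$ is the $c$-fold cyclic branched cover of $B^4$ along the Milnor fibre of $x^a+y^b$, a surface of genus $\frac{1}{2}(a-1)(b-1)$ bounding $T_{a,b}$, so $\Sigma(a,b,c)$ with its $\mathbb{Z}_c$-action is the extremal configuration that $\theta^{(c)}$ is designed to detect --- but promoting this heuristic to the equality, rather than the inequality already covered above, is exactly what the Floer computation supplies.)

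The step I expect to be the main obstacle is the equivariant bookkeeping of the second paragraph. The non-equivariant correction term $d(\Sigma(a,b,c))$ by itself does \emph{not} see $\frac{1}{2}(a-1)(b-1)$ --- indeed, slice-genus bounds coming from non-equivariant Floer invariants of branched covers need not be sharp for torus knots in general --- so the crux is that the $\mathbb{Z}_c$-equivariant refinement lifts the relevant $U$-tower to a strictly higher grading, and one must track the $\mathbb{Z}_c$-weights through finite-dimensional approximation and the Conley index precisely enough to locate that grading exactly rather than merely bound it; this is delicate because $HF^{+}_{red}(\Sigma(a,b,c))$ can be large and every summand contributes. Once the equivariant package is secured, passing to the closed form $\frac{1}{2}(a-1)(b-1)$ is a manipulation of Dedekind sums and semigroup counts, but organising that manipulation so the closed form genuinely emerges --- rather than an opaque alternating sum --- is the second point requiring care.
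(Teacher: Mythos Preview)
Your proposal takes a genuinely different route from the paper, and the step you flag as ``the main obstacle'' is exactly the step the paper avoids. You aim for a direct computation of the $\mathbb{Z}_c$-equivariant Seiberg--Witten--Floer cohomology of $\Sigma(a,b,c)$ by tracking $\mathbb{Z}_c$-weights through the Mrowka--Ozsv\'ath--Yu description and the Conley index. The paper instead exploits that the $\mathbb{Z}_c$-action sits inside the Seifert circle action, so its generator is isotopic to the identity and acts trivially on $HF^+$; hence the $E_2$-page of the spectral sequence for the equivariant theory is simply $HF^+(-\Sigma(a,b,c)) \otimes H^*_{\mathbb{Z}_c}$. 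A general structural result (Proposition~\ref{prop:jinv}) then expresses $j^{(c)}$ in terms of purely non-equivariant data --- the top degree $\ell^+$ of $HF^+_{red}(-\Sigma(a,b,c))$, $\delta_0$, $\delta$, and the Casson invariant --- provided certain hypotheses on $HF^+_{red}$ hold. The bulk of the technical work (\S\ref{sec:abc}) is verifying these hypotheses: a detailed analysis of N\'emethi's $\tau$-function, culminating in Proposition~\ref{prop:taumax}, shows that no element of top degree in $HF^+_{red}(-\Sigma(a,b,c))$ lies in the image of $U$. No direct equivariant index computation is ever performed.

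The combinatorics also differ. You want $\tfrac{1}{2}(a-1)(b-1)$ to appear as the gap count of the two-generator semigroup $\langle a,b\rangle$. The paper instead works with the three-generator semigroup $G = \langle bc, ac, ab \rangle$: it obtains $j'(-T_{a,b}) = \kappa(a,b,c) = |G \cap [0,N]|$, and then the Casson-invariant identity $8\lambda(\Sigma(a,b,c)) = -(a-1)(b-1)(c-1) + 4\kappa(a,b,c)$ together with $\sigma^{(c)}(T_{a,b}) = 8\lambda(\Sigma(a,b,c))$ makes the $c$-dependence cancel in $\theta^{(c)} = (2\kappa - 4\lambda)/(c-1)$. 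In particular, the paper computes $\theta^{(c)}$ exactly from its definition in a single stroke; the Seifert-genus upper bound you invoke is not used. As it stands, your proposal does not carry out the direct equivariant computation it rests on, and the asserted link between the equivariant $U$-tower and the gap count of $\langle a,b\rangle$ remains unjustified --- whereas the paper's spectral-sequence route makes that computation unnecessary.
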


Since $\theta^{(c)}$ is a lower bound for the slice genus, we obtain the Milnor conjecture as an immediate corollary:
\begin{corollary}
Let $a,b > 1$ be coprime. Then $g_4(T_{a,b}) = \dfrac{1}{2}(a-1)(b-1)$.
\end{corollary}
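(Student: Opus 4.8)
The plan is to deduce the Corollary from Theorem~\ref{thm:theta1} together with the standard upper bound. Fix a prime $c$ not dividing $ab$ (such primes exist, indeed infinitely many). On the one hand, the genus-$\tfrac12(a-1)(b-1)$ Seifert surface of $T_{a,b}$ (its Milnor fibre) may be pushed, rel boundary, into the interior of $B^4$ so as to realise $T_{a,b}$ as the boundary of a smoothly and properly embedded surface of that genus; hence $g_4(T_{a,b}) \le \tfrac12(a-1)(b-1)$. On the other hand, $\theta^{(c)}$ is a lower bound for the slice genus, so $g_4(T_{a,b}) \ge \theta^{(c)}(T_{a,b})$, and Theorem~\ref{thm:theta1} identifies the right-hand side with $\tfrac12(a-1)(b-1)$. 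Combining the two inequalities gives $g_4(T_{a,b}) = \tfrac12(a-1)(b-1)$. Thus the only real content is the lower bound $\theta^{(c)}(T_{a,b}) \ge \tfrac12(a-1)(b-1)$ furnished by Theorem~\ref{thm:theta1}, which I would regard as the heart of the matter.

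To establish that bound I would start from the identification $\Sigma_c(T_{a,b}) = \Sigma(a,b,c)$, under which the deck $\mathbb{Z}_c$-action is the restriction of the Seifert circle action; by definition $\theta^{(c)}(T_{a,b})$ is a Fr\o yshov-type numerical invariant extracted from the $\mathbb{Z}_c$-equivariant Seiberg--Witten--Floer cohomology of $Y = \Sigma(a,b,c)$ with its canonical spin$^c$ structure (working over $\mathbb{F}_c$). The key leverage is that the $\mathbb{Z}_c$-action on $Y$ extends to the full Seifert circle, which commutes with the gauge group; hence the Seiberg--Witten--Floer data of $Y$ is $S^1\times S^1$-equivariant and the $\mathbb{Z}_c$-equivariant information is obtained by restriction along $\mathbb{Z}_c \hookrightarrow S^1$. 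I would exploit this as follows: after a Seifert-invariant perturbation the irreducible Seiberg--Witten solutions on the Seifert space $Y$ form isolated critical orbits indexed by effective orbifold divisors on the base $S^2(a,b,c)$, each representable by a solution invariant under the Seifert circle, with the moduli spaces of flowlines between them constrained by the same symmetry. This yields an explicit $\mathbb{Z}_c$-equivariant cellular model for the Floer spectrum of $Y$ — generators, gradings, the $\mathbb{Z}_c$-representations carrying them, and the equivariant attaching maps — all governed by the Seifert invariants $(a,b,c)$, equivalently by the negative-definite plumbing bounding $\Sigma(a,b,c)$. From this model one reads off $\theta^{(c)}(T_{a,b})$, and the final step is to check that the value is exactly $\tfrac12(a-1)(b-1)$; I expect to verify this by matching against the known closed formulas for the $d$-invariant and reduced Floer homology of $\Sigma(a,b,c)$ (i.e.\ for the graded root of the plumbing), or by an induction on $a,b,c$ using the recursive structure of these graphs.

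The main obstacle is the equivariant bookkeeping in the middle step. It is not enough to know the ordinary Floer homology of $\Sigma(a,b,c)$: one must track how $\mathbb{Z}_c \subset S^1$ acts on each critical orbit \emph{and} on the moduli spaces of Seiberg--Witten trajectories (equivalently, on the equivariant attaching maps of the Conley index), and then push this information through the definition of $\theta^{(c)}$ — it is precisely the $\mathbb{Z}_c$-grading that shifts the bottom of the ``$U$-tower'' up to the level $\tfrac12(a-1)(b-1)$, well above what the non-equivariant $d$-invariant alone detects. A secondary, more combinatorial difficulty is reconciling the gauge-theoretic output with the clean closed form, since the intermediate data — Seifert invariants, plumbing graph, and the distribution of spin$^c$ structures on the branched cover — is considerably more intricate than the final answer.
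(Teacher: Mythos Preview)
Your first paragraph is a complete and correct proof of the Corollary, and it matches the paper's argument exactly: the upper bound comes from the Milnor fibre, the lower bound from Theorem~\ref{thm:theta1} together with $g_4(K)\ge\theta^{(c)}(K)$.

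The remaining two paragraphs are not part of the proof of this Corollary --- they are a sketch of how you would prove Theorem~\ref{thm:theta1} itself, which here is a black box. For what it is worth, the paper's route to Theorem~\ref{thm:theta1} differs from your sketch: rather than constructing an explicit $\mathbb{Z}_c$-equivariant cellular model of the Floer spectrum, the paper exploits the spectral sequence from non-equivariant to equivariant Seiberg--Witten--Floer cohomology. Because the $\mathbb{Z}_c$-action is contained in the Seifert circle, it acts trivially on $HF^+$, so the $E_2$-page is just $HF^+(-\Sigma(a,b,c))\otimes H^*_{\mathbb{Z}_c}$. The work is then to show that the differentials and filtration behave well enough that $j^{(c)}$ can be read off from the \emph{ordinary} Floer homology (Proposition~\ref{prop:jinv}), and to verify the hypotheses of that proposition via the graded-roots $\tau$-function (Propositions~\ref{prop:taumax} and~\ref{prop:topdeg}). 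The final identification with $\tfrac12(a-1)(b-1)$ goes through a lattice-point count $\kappa(a,b,c)$ and the Fintushel--Stern/Collin--Saveliev formula relating the Casson invariant to signatures. So the equivariant bookkeeping you flag as the main obstacle is largely circumvented: one never needs to track the $\mathbb{Z}_c$-action on individual critical orbits or trajectories, only the degeneration of the spectral sequence.
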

\begin{proof}
The Milnor fibre of the singularity $x^a = y^b$ has genus $(a-1)(b-1)/2$ \cite{mil}, hence $g_4(T_{a,b}) \le (a-1)(b-1)/2$. On the other hand, if we let $c$ be any prime not dividing $ab$, then Theorem \ref{thm:theta1} gives $g_4(T_{a,b}) \ge (a-1)(b-1)/2$.
\end{proof}

The original proof of the Milnor conjecture due to Kronheimer and Mrowka uses gauge theory and adjunction inequalities \cite{krmr}. The result was proven again by Ozsv\'ath and Szab\'o using the $\tau$-invariant of Knot Floer homology \cite{os1} and by Rasmussen using the $s$-invariant of Khovanov homology \cite{ras}. Although our proof uses gauge theory, it does not use adjunction inequalities but rather is based on Floer theoretic methods. Thus our proof has more in common with Ozsv\'ath--Szab\'o and Rasmussen than with Kronheimer--Mrowka. It is interesting to note that our proof, like those of Ozsv\'ath--Szab\'o and Rasmussen, is based on finding a knot concordance invariant which bounds the slice genus and equals $(a-1)(b-1)/2$ for the torus knot $T_{a,b}$.

\subsection{Equivariant delta invariants of Brieskorn homology spheres}

Our next result concerns the equivariant delta invariants of Brieskorn homology spheres. The equivariant delta invariants, introduced in \cite{bh}, are a certain equivariant generalisation of the Ozsv\'ath--Szab\'o $d$-invariant and are equivariant homology cobordism invariants. Given a rational homology $3$-sphere $Y$, an action of $\mathbb{Z}_p$ on $Y$ by orientation preserving diffeomorphisms and a $\mathbb{Z}_p$-invariant spin$^c$-structure $\mathfrak{s}$, we obtain a sequence of invariants $\delta_j^{(p)}(Y , \mathfrak{s}) \in \mathbb{Q}$ indexed by a non-negative integer $j$. We call $\delta_j^{(p)}(Y , \mathfrak{s})$ the equivariant delta invariants of $(Y , \mathfrak{s})$. When $Y$ is an integral homology $3$-sphere, it has a unique spin$^c$-structure which is automatically $\mathbb{Z}_p$-invariant. In this case we may write the invariants as $\delta^{(p)}_j(Y)$. The most important property of these invariants is that they satisfy an equivariant version of the Fr{\o}yshov inequality \cite{bh}. In particular, this implies that they are invariant under equivariant homology cobordism. Consequently, the $\delta_j^{(p)}$ define obstructions to extending the $\mathbb{Z}_p$-action over an integral or rational homology $4$-ball bounding $Y$:
\begin{proposition}[Proposition 7.6, \cite{bh}]
Let $Y$ be an integral homology $3$-sphere on which $\mathbb{Z}_p$ acts by orientation preserving diffeomorphisms. Suppose that $Y$ is bounded by a smooth integer homology $4$-ball $W$. If the $\mathbb{Z}_p$-action extends smoothly over $W$ then $\delta_j^{(p)}(Y) = \delta_j^{(p)}(-Y) = 0$ for all $j \ge 0$.
\end{proposition}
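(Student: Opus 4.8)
The plan is to turn the smooth extension $W$ into a $\mathbb{Z}_p$-equivariant homology cobordism from a round $3$-sphere to $Y$, and then feed it into the equivariant Froyshov inequality of \cite{bh}, which we are free to invoke. Observe first that $-W$ is again a smooth integer homology $4$-ball bounding $-Y$ over which the $\mathbb{Z}_p$-action extends, so it suffices to prove $\delta_j^{(p)}(Y) = 0$ for all $j \geq 0$; applying the same argument to $-W$ then yields $\delta_j^{(p)}(-Y) = 0$.

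First I would excise an invariant ball. A $\mathbb{Z}$-homology $4$-ball is $\mathbb{F}_p$-acyclic, so by Smith theory the fixed-point set $W^{\mathbb{Z}_p}$ is nonempty; since no component of a fixed-point set can lie entirely in $\partial W$ (the inward normal direction along it would be $\mathbb{Z}_p$-fixed), $W^{\mathbb{Z}_p}$ contains an interior point $x$. Averaging a Riemannian metric over $\mathbb{Z}_p$ and taking a small geodesic ball $B$ about $x$ produces a $\mathbb{Z}_p$-invariant $4$-ball $B \subset \mathrm{int}(W)$ on which $\mathbb{Z}_p$ acts linearly through the isotropy representation at $x$; in particular $\partial B \cong S^3$ carries a linear $\mathbb{Z}_p$-action. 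The complement $W_0 := W \setminus \mathrm{int}(B)$ is then a smooth $\mathbb{Z}_p$-equivariant cobordism from $S^3$ (with this linear action) to $Y$, and since $W$ is a $\mathbb{Z}$-homology ball, $W_0$ is a $\mathbb{Z}$-homology cobordism; in particular $H_2(W_0) = 0$ and the intersection form of $W_0$ is trivial.

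Next I would apply the equivariant Froyshov inequality to $W_0$ and to its orientation-reversal $\overline{W_0}$, which is again a $\mathbb{Z}_p$-equivariant homology cobordism, now from $Y$ to $S^3$. The correction term in that inequality depends only on the intersection form of the cobordism, so it vanishes in both cases, and the two applications pin down $\delta_j^{(p)}(Y) = \delta_j^{(p)}(S^3)$ for all $j$ --- this is exactly the mechanism by which $\delta_j^{(p)}$ is an equivariant homology cobordism invariant. It then remains to see that $\delta_j^{(p)}(S^3) = 0$: the Seiberg--Witten moduli space of the round $S^3$ is a single reducible point, so its $\mathbb{Z}_p$-equivariant Seiberg--Witten--Floer spectrum is the sphere spectrum (with the grading normalisation built into the definition of $\delta_j^{(p)}$), whence $\delta_j^{(p)}(S^3) = 0$ for every linear $\mathbb{Z}_p$-action and every $j$. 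This gives $\delta_j^{(p)}(Y) = 0$, and the same argument applied to $-W$ gives $\delta_j^{(p)}(-Y) = 0$.

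The step I expect to be the main obstacle is not the homological bookkeeping but the Floer-theoretic input around the excised sphere: one must know that removing the invariant ball $B$ yields a cobordism inducing the map of $\mathbb{Z}_p$-equivariant Seiberg--Witten--Floer spectra that appears in the Froyshov inequality (so that the intersection form really is the only correction), and that the equivariant delta invariants of $S^3$ with an arbitrary linear $\mathbb{Z}_p$-action vanish, whether that action is free (the case in which the $\mathbb{Z}_p$-action on $Y$ is free and $x$ is isolated in $W^{\mathbb{Z}_p}$) or fixes a circle (the non-free case). Both facts are part of the foundational setup of \cite{bh}; granting them, the proposition follows.
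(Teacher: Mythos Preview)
The paper does not contain a proof of this proposition: it is quoted verbatim from \cite{bh} (as ``Proposition 7.6, \cite{bh}'') and used as a black box in the introduction, so there is no argument here against which to compare yours.

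That said, your sketch is sound and is the natural proof. The one point worth tightening is the claim that a component of $W^{\mathbb{Z}_p}$ cannot lie entirely in $\partial W$: your reasoning is correct but can be made explicit by noting that at a boundary fixed point the $\mathbb{Z}_p$-action on $T_xW$ must preserve the inward normal line, and a cyclic group of prime order acting orientation-preservingly on a $4$-manifold with boundary acts trivially on that line (for odd $p$ there is no nontrivial real $1$-dimensional representation; for $p=2$ a sign flip would exchange the two sides of $\partial W$), so the fixed set propagates into the interior. With that, $W_0$ is a $\mathbb{Z}_p$-equivariant integral homology cobordism from $(S^3,\text{linear action})$ to $Y$, and the equivariant homology-cobordism invariance of $\delta_j^{(p)}$ established in \cite{bh} (equivalently, two applications of the equivariant Fr{\o}yshov inequality, Theorem~5.3 there, with vanishing correction term) yields $\delta_j^{(p)}(Y)=\delta_j^{(p)}(S^3)=0$; the same applied to $-W$ gives $\delta_j^{(p)}(-Y)=0$.
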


In fact, we can relax the assumption that $W$ is an integer homology $4$-ball to simply being a rational homology $4$-ball provided that $W$ admits a $\mathbb{Z}_p$-invariant spin$^c$-structure. This is automatically true if $p$ does not divide the order of $H^2(W ; \mathbb{Z})$, for then $\mathbb{Z}_p$ can't act freely on the set of spin$^c$-structures.

The sequence of invariants $\{ \delta_j^{(p)}(Y) \}_{j \ge 0}$ is decreasing and eventually constant. We set $\delta_\infty^{(p)}(Y) = \lim_{j \to \infty} \delta_j^{(p)}(Y)$.

Let $Y = \Sigma(a_1, \dots , a_r)$ be a Brieskorn homology sphere and let $p$ be any prime. We assume $a_1, \dots , a_r > 1$ and $r \ge 3$ so that $Y \neq S^3$. In \textsection \ref{sec:deltabri} we prove the following results (see Proposition \ref{prop:deltaprop}):

\begin{itemize}
\item[(1)]{$\delta_j^{(p)}(Y) = \delta_\infty^{(p)}(Y)$ for all $j \ge 0$.}
\item[(2)]{$\delta(Y) \le \delta_\infty^{(p)}(Y) \le -\lambda(Y)$.}
\item[(3)]{$\lambda(Y) \le \delta_j^{(p)}(-Y) \le -\delta(Y)$ for all $j \ge 0$.}
\end{itemize}
Here $\lambda(Y)$ is the Casson invariant of $Y$ and $\delta(Y) = d(Y)/2$ is half the Ozsv\'ath--Szab\'o $d$-invariant. In particular, if $Y$ is a Brieskorn homology sphere which bounds a contractible $4$-manifold, then $\delta(Y) = 0$, $\delta_j^{(p)}(Y) = \delta_\infty^{(p)}(Y)$ and
\[
\delta_\infty^{(p)}(-Y) \le \delta_j^{(p)}(-Y) \le 0
\]
for all $j \ge 0$. Thus $\delta_j^{(p)}(\pm Y) = 0$ for all $j$ if and only if $\delta_\infty^{(p)}( \pm Y) = 0$. This justifies restricting attention to the invariants $\delta_\infty^{(p)}( \pm Y)$.

We first consider the case of free $\mathbb{Z}_p$-actions. Given a prime $p$, the restriction of the Seifert circle action on $Y = \Sigma(a_1 , \dots , a_r)$ to $\mathbb{Z}_p$ acts freely if and only if $p$ does not divide $a_1 \cdots a_r$. Hence the action is free for all but finitely many primes. In fact any free action of a finite group on $Y$ is conjugate to a finite subgroup of the Seifert circle action \cite[Proposition 4.3]{lusj}. In the free case we have:

\begin{theorem}\label{thm:free0}
Let $Y = \Sigma(a_1, \dots , a_r)$ be a Brieskorn homology sphere and let $p$ be a prime not dividing $a_1 \cdots a_r$. Set $Y_0 = Y/\mathbb{Z}_p$. Then for any spin$^c$-structure $\mathfrak{s}_0$ on $Y_0$, we have 
\[
\delta^{(p)}_\infty(Y) - \delta(Y) = {\rm rk}( HF_{red}^+(Y) ) - {\rm rk}( HF_{red}^+(Y_0 , \mathfrak{s}_0 ) ).
\]
\end{theorem}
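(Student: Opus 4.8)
The plan is to compute $\delta_\infty^{(p)}(Y)$ by a fixed-point analysis of the Seifert $\mathbb{Z}_p$-action on the Seiberg--Witten--Floer spectrum $SWF(Y)$. I would begin by recording the standard structure of $SWF(Y)$ for a Brieskorn homology sphere: $HF_{red}^+(Y)$ is supported in even gradings, and $SWF(Y)$ admits an $(S^1\times\mathbb{Z}_p)$-equivariant model (the $S^1$ being the gauge circle, the $\mathbb{Z}_p$ the Seifert action) in which $\widetilde{H}^*_{S^1}(SWF(Y))$ consists of a single $U$-tower emanating from the reducible, whose bottom grading records $d(Y)$, together with a finite-dimensional summand realising $HF_{red}^+(Y)$. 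By Proposition~\ref{prop:deltaprop}(1) the invariants $\delta_j^{(p)}(Y)$ are constant in $j$, so it suffices to control $\widetilde{H}^*_{S^1\times\mathbb{Z}_p}(SWF(Y))$ in all sufficiently high gradings.

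Since $p\nmid a_1\cdots a_r$, the $\mathbb{Z}_p$-action on $Y$ is free, and a configuration on $Y$ is $\mathbb{Z}_p$-invariant up to gauge precisely when it descends to $Y_0=Y/\mathbb{Z}_p$; the descended configuration carries a well-defined spin$^c$-structure, and all $p$ spin$^c$-structures on $Y_0$ arise this way, each pulling back to the unique spin$^c$-structure on $Y$. Using compatible finite-dimensional approximations I would identify the fixed-point subspectrum $SWF(Y)^{\mathbb{Z}_p}$ with the $p$ spectra $SWF(Y_0,\mathfrak{s}_0)$ glued along their common reducible and basepoint, up to grading shifts determined by the relevant $\eta$-invariants and Seifert data of $Y$ and $Y_0$. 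In particular $SWF(Y)^{\mathbb{Z}_p}$ carries a single $U$-tower, while the total rank of its reduced homology is $\sum_{\mathfrak{s}_0}{\rm rk}(HF_{red}^+(Y_0,\mathfrak{s}_0))$.

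Next I would exploit the cofiber sequence $SWF(Y)^{\mathbb{Z}_p}\to SWF(Y)\to F$. The complementary spectrum $F$ has free $S^1\times\mathbb{Z}_p$-action away from the basepoint (the reducible lies in the fixed set), so $\widetilde{H}^*_{S^1\times\mathbb{Z}_p}(F)$ is finite-dimensional, and the long exact sequence gives $\widetilde{H}^*_{S^1\times\mathbb{Z}_p}(SWF(Y))\cong\widetilde{H}^*_{S^1\times\mathbb{Z}_p}(SWF(Y)^{\mathbb{Z}_p})$ in all sufficiently high gradings. From this stable description $\delta_\infty^{(p)}(Y)$ can be read off: the single $U$-tower sits at a grading governed by $d(Y)$, but in the $\mathbb{Z}_p$-Borel theory each reduced generator of the fixed set contributes a tower in the $H^*(B\mathbb{Z}_p)$-direction that merges into it and raises its effective bottom. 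Combining this with a Smith-type count writing ${\rm rk}(HF_{red}^+(Y))$ as $\sum_{\mathfrak{s}_0}{\rm rk}(HF_{red}^+(Y_0,\mathfrak{s}_0))$ plus the ($p$-divisible) contribution of $F$, and with the known relation between the Casson invariant, the $d$-invariant and ${\rm rk}(HF_{red}^+)$ for these Seifert manifolds (whose reduced Floer homology lies in even gradings), the numerology should collapse to $\delta_\infty^{(p)}(Y)-\delta(Y)={\rm rk}(HF_{red}^+(Y))-{\rm rk}(HF_{red}^+(Y_0,\mathfrak{s}_0))$; since the left-hand side is manifestly independent of $\mathfrak{s}_0$, this simultaneously shows ${\rm rk}(HF_{red}^+(Y_0,\mathfrak{s}_0))$ is independent of $\mathfrak{s}_0$.

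The main obstacle I expect lies in the second step and its grading bookkeeping: realising the identification of $SWF(Y)^{\mathbb{Z}_p}$ with the Floer spectra of the $(Y_0,\mathfrak{s}_0)$ at the level of spectra --- not merely their homologies --- and pinning down the exact grading shift, which requires care with the finite-dimensional approximations, the two commuting circle actions and the spin$^c$-structures on $Y_0$. That shift, rather than any formal homological algebra, carries the geometric content of the theorem, and it is what forces the answer to depend only on ranks of reduced groups, and to be symmetric in $\mathfrak{s}_0$, instead of on a less tractable combination of $d$-invariants. A secondary point is the appeal to Proposition~\ref{prop:deltaprop}(1), which is precisely what ensures that $\delta_\infty^{(p)}(Y)$, rather than some $\delta_j^{(p)}(Y)$ with $j$ finite, is the invariant governed by the stable cohomology.
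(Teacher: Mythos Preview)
Your outline shares its starting point with the paper's proof --- both use the Lidman--Manolescu identification of the $\mathbb{Z}_p$-fixed set of the Conley index with the Floer theory of the quotient, and then the localisation theorem --- but from there your sketch contains two substantive errors and one genuine gap. First, for $Y=\Sigma(a_1,\dots,a_r)$ with the orientation used here, $HF_{red}^+(Y)$ is concentrated in \emph{odd} degrees, not even; it is $HF_{red}^+(-Y)$ that lies in even degrees. This matters because the parity of the reduced part versus the tower is exactly what drives the spectral sequence argument. Second, your description of the fixed subspectrum is incorrect: with a fixed lift of the $\mathbb{Z}_p$-action to the spinor bundle, $SWF(Y)^{\mathbb{Z}_p}$ is identified with a \emph{single} $SWF(Y_0,\mathfrak{s}_0)$, not a wedge of all $p$ of them glued along the reducible. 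Different lifts correspond to different $\mathfrak{s}_0$, and the theorem is proved one $\mathfrak{s}_0$ at a time; independence of the right-hand side in $\mathfrak{s}_0$ is then a consequence, as you note, but for the opposite reason.

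The real gap is in your third paragraph. Localisation (equivalently your cofiber argument) tells you that after inverting $S$, the reduced part of $HSW^*_{\mathbb{Z}_p}(Y)$ has rank ${\rm rk}(HF_{red}^+(Y_0,\mathfrak{s}_0))$ per degree. What you need is that the integer $m$ by which the $U$-tower in $E_\infty$ has shifted up --- which equals $\delta_\infty^{(p)}(Y)-\delta(Y)$ by Proposition~\ref{prop:deltaprop} --- is exactly ${\rm rk}(HF_{red}^+(Y))-{\rm rk}(HF_{red}^+(Y_0,\mathfrak{s}_0))$. A Smith-type count only gives an inequality here: every unit of tower-shift kills one reduced class, so $m\le{\rm rk}(HF_{red}^+(Y))-{\rm rk}(HF_{red}^+(Y_0,\mathfrak{s}_0))$, but reduced classes could in principle also be killed by differentials originating in the reduced part itself, which would enlarge the right-hand side without changing $m$. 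The paper closes this gap by a parity argument carried out over $\mathbb{Z}$ rather than $\mathbb{F}_p$: because $H^*_{\mathbb{Z}_p}(pt;\mathbb{Z})$ is concentrated in even degrees while $HF_{red}^+(Y;\mathbb{Z})$ is in odd degrees, every even-page differential $d_r$ in the integral spectral sequence vanishes, and this forces all differentials in the $\mathbb{F}_p$-spectral sequence to emanate from the tower. Your ``numerology should collapse'' is precisely this step, and it does not follow from localisation or from any general Smith inequality; it needs the specific parity structure of Brieskorn spheres together with the passage to integral coefficients.
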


Furthermore, we have:

\begin{theorem}\label{thm:free1}
We have that ${\rm rk}( HF_{red}^+(Y) ) > {\rm rk}( HF_{red}^+( Y_0 , \mathfrak{s}_0 ))$ except in the following cases:
\begin{itemize}
\item[(1)]{$Y = \Sigma(2,3,5)$ and $p$ is any prime.}
\item[(2)]{$Y = \Sigma(2,3,11)$ and $p=5$.}
\end{itemize}
In case (1) we have ${\rm rk}( HF_{red}^+(Y) ) = {\rm rk}( HF_{red}^+( Y_0 , \mathfrak{s}_0 )) = 0$ and in case (2) we have ${\rm rk}( HF_{red}^+(Y) ) = {\rm rk}( HF_{red}^+( Y_0 , \mathfrak{s}_0 )) = 1$.
\end{theorem}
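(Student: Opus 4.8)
The plan is to reduce Theorem~\ref{thm:free1} to an explicit inequality between numerical invariants of two Seifert fibered spaces and then settle that inequality by an estimate together with a finite check. By Theorem~\ref{thm:free0} and the inequality $\delta(Y)\le\delta^{(p)}_\infty(Y)$ of Proposition~\ref{prop:deltaprop}(2),
\[
{\rm rk}\big(HF_{red}^+(Y)\big)-{\rm rk}\big(HF_{red}^+(Y_0,\mathfrak{s}_0)\big)=\delta^{(p)}_\infty(Y)-\delta(Y)\ge 0,
\]
and the left hand side is independent of $\mathfrak{s}_0$, so ${\rm rk}(HF_{red}^+(Y_0,\mathfrak{s}_0))$ does not depend on $\mathfrak{s}_0$ either. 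Hence it suffices to show this common difference is \emph{strictly} positive outside the two listed cases, equivalently that $\delta^{(p)}_\infty(Y)>\delta(Y)$. Now $Y=\Sigma(a_1,\dots,a_r)$ is a negative Seifert fibered integral homology sphere, and since $p\nmid a_1\cdots a_r$ the quotient $Y_0=Y/\mathbb{Z}_p$ is again a negative Seifert fibered rational homology sphere: it has the same base orbifold $S^2(a_1,\dots,a_r)$, its orbifold Euler number is that of $Y$ multiplied by $p$, and $|H_1(Y_0;\mathbb{Z})|=p$. The first step is to write down the normalized Seifert invariants of $Y_0$ explicitly.

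Next I would convert ranks into Euler characteristics. For a negative Seifert fibered rational homology sphere $M$, $HF^+(M,\mathfrak{s})$ is supported in even gradings for every spin$^c$-structure $\mathfrak{s}$ (Ozsv\'ath--Szab\'o's plumbing calculus; N\'emethi's lattice cohomology), so ${\rm rk}(HF_{red}^+(M,\mathfrak{s}))=\chi(HF_{red}^+(M,\mathfrak{s}))$. Applying this to $Y$ and to the $p$ spin$^c$-structures of $Y_0$, together with the Ozsv\'ath--Szab\'o formula $\sum_{\mathfrak{s}}\big(\chi(HF_{red}^+(M,\mathfrak{s}))-\tfrac12 d(M,\mathfrak{s})\big)=|H_1(M;\mathbb{Z})|\,\lambda(M)$ relating the Euler characteristic to the $d$-invariants and the Casson--Walker invariant $\lambda$, one obtains (with the normalization of $\lambda$ making this identity valid)
\[
{\rm rk}\big(HF_{red}^+(Y)\big)=\lambda(Y)+\delta(Y),\qquad {\rm rk}\big(HF_{red}^+(Y_0,\mathfrak{s}_0)\big)=\lambda(Y_0)+\frac{1}{2p}\sum_{\mathfrak{s}_0}d(Y_0,\mathfrak{s}_0),
\]
so the theorem becomes $\lambda(Y)+\delta(Y)>\lambda(Y_0)+\tfrac{1}{2p}\sum_{\mathfrak{s}_0}d(Y_0,\mathfrak{s}_0)$.

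Every term above has a closed form in $a_1,\dots,a_r$ and $p$: the Casson--Walker invariants via the Fintushel--Stern/Neumann--Wahl (equivalently Lescop) Dedekind-sum formula for Seifert spaces, and the $d$-invariants via the Ozsv\'ath--Szab\'o plumbing algorithm (equivalently N\'emethi's $\tau$-function), upon substituting the orbifold Euler numbers $-1/(a_1\cdots a_r)$ for $Y$ and $-p/(a_1\cdots a_r)$ for $Y_0$. This reduces the theorem to a concrete number-theoretic inequality $F(a_1,\dots,a_r;p)>0$, and it remains to establish it.

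Finally I would estimate $F$. The key point is a lower bound for ${\rm rk}(HF_{red}^+(\Sigma(a_1,\dots,a_r)))$ — which is a positive lattice-point count and grows with $a_1\cdots a_r$ — set against an upper bound for the averaged quantity ${\rm rk}(HF_{red}^+(Y_0,\mathfrak{s}_0))$ that does not keep pace, the estimate being moreover uniform in $p$. This should leave only finitely many tuples $(a_1,\dots,a_r)$, with $r$ bounded and the $a_i$ small, and, for each, finitely many primes $p$; a direct, computer-assisted evaluation of $F$ on this finite list should then produce exactly the exceptions $Y=\Sigma(2,3,5)$ — where ${\rm rk}(HF_{red}^+(Y))=0$, so equality is forced for every $p$ by the displayed sandwich — and $Y=\Sigma(2,3,11)$ with $p=5$. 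I expect this last step to be the main obstacle: making the estimate effective enough to isolate the sporadic pair $(\Sigma(2,3,11),5)$ while keeping it uniform in $p$, and then carefully handling the residual finite list.
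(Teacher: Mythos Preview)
Your route is genuinely different from the paper's, and the crucial final step is not actually carried out. You propose to pass through Casson--Walker invariants and averaged $d$-invariants, reducing to an explicit inequality $F(a_1,\dots,a_r;p)>0$, and then to ``estimate $F$'' so as to leave only a finite residue. But you do not perform this estimate; you only describe its intended shape. Making it effective would require uniform-in-$p$ control of the Dedekind sums appearing in $\lambda(Y_0)$ and of the sum $\sum_{\mathfrak{s}_0} d(Y_0,\mathfrak{s}_0)$, sharp enough to isolate $(\Sigma(2,3,11),5)$. That is a substantial analytic task, and nothing in your outline indicates how to do it. There is also a sign/parity slip: with the paper's conventions $HF_{red}^+(Y)$ is concentrated in \emph{odd} degrees (so $\chi=-{\rm rk}$), and your displayed formula ${\rm rk}(HF_{red}^+(Y))=\lambda(Y)+\delta(Y)$ has the wrong sign relative to Proposition~\ref{prop:deltaprop}(1); this does not doom the approach but it would need to be straightened out before any estimate could be trusted.

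The paper avoids the whole Casson--Walker detour by a direct combinatorial comparison. Fixing the canonical spin$^c$-structure $\mathfrak{s}_0$ on $Y_0$, one observes from the N\'emethi $\tau$-function that $\Delta_p(n)=\Delta(np)$, so the abstract delta sequence for $(Y_0,\mathfrak{s}_0)$ embeds via $n\mapsto np$ as a subsequence of that for $Y$. By the Karakurt--Lidman monotonicity for delta subsequences this already gives the non-strict inequality. For strictness one exhibits a single explicit extra element: taking $x=a_2a_3\cdots a_r$ (with the $a_i$ ordered decreasingly) gives $x\in G$, $p\nmid x$, and $2x\le N$ unless the elementary inequality $\tfrac{3}{a_1}+\tfrac{1}{a_2}+\cdots+\tfrac{1}{a_r}\ge r-2$ holds. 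That inequality forces $r=3$ and pins down a short explicit list of $(a_1,a_2,a_3)$; for each of these only primes $p\le N$ matter (by a one-line bound showing $\Delta_p(n)\ge 0$ for $n>N/p$), and a small table finishes the job. This argument never touches $\lambda(Y_0)$ or the sum of $d$-invariants, and the reduction to finitely many cases is an explicit arithmetic inequality rather than an asymptotic estimate.
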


Combining these two results gives:
\begin{corollary}\label{cor:dinffree0}
Let $Y = \Sigma(a_1 , a_2 , \dots , a_r)$ be a Brieskorn homology sphere and let $p$ be a prime not dividing $a_1 \cdots a_r$. Then $\delta_\infty^{(p)}(Y) > \delta(Y)$ except in the following cases:
\begin{itemize}
\item[(1)]{$Y = \Sigma(2,3,5)$ and $p$ is any prime.}
\item[(2)]{$Y = \Sigma(2,3,11)$ and $p=5$.}
\end{itemize}
In both cases we have $\delta_\infty^{(p)}(Y) = \delta(Y) = 1$.
\end{corollary}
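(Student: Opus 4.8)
The plan is to read this off directly from Theorems~\ref{thm:free0} and~\ref{thm:free1}, the only extra ingredient being a computation of the Ozsv\'ath--Szab\'o $d$-invariant in the two exceptional cases. Put $Y_0 = Y/\mathbb{Z}_p$ and fix any spin$^c$-structure $\mathfrak{s}_0$ on $Y_0$; Theorem~\ref{thm:free0} tells us that the quantity below is independent of that choice, and that
\[
\delta_\infty^{(p)}(Y) - \delta(Y) = {\rm rk}\big( HF_{red}^+(Y) \big) - {\rm rk}\big( HF_{red}^+(Y_0, \mathfrak{s}_0) \big).
\]

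Next I would apply Theorem~\ref{thm:free1}. It states that the right-hand side of the displayed identity is strictly positive unless either $Y = \Sigma(2,3,5)$ (with $p$ any prime, necessarily $p \neq 2,3,5$ for the action to be free) or $Y = \Sigma(2,3,11)$ with $p = 5$, and that in those two cases ${\rm rk}(HF_{red}^+(Y)) = {\rm rk}(HF_{red}^+(Y_0,\mathfrak{s}_0))$, so that the right-hand side vanishes. Combining this with the displayed identity yields at once $\delta_\infty^{(p)}(Y) > \delta(Y)$ outside the two listed cases, and $\delta_\infty^{(p)}(Y) = \delta(Y)$ in those cases.

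It then remains to pin down the common value $\delta(Y) = \delta_\infty^{(p)}(Y)$ in the exceptional cases. Since $\delta(Y) = d(Y)/2$, this reduces to the $d$-invariant computations $d(\Sigma(2,3,5)) = 2$ and $d(\Sigma(2,3,11)) = 2$; both are standard (for instance via the Ozsv\'ath--Szab\'o algorithm for $d$-invariants of negative-definite plumbed $3$-manifolds, or from the known $d$-invariants of the families $\Sigma(2,3,6n\pm 1)$), and give $\delta_\infty^{(p)}(Y) = \delta(Y) = 1$ as claimed. For $Y = \Sigma(2,3,5)$ one can alternatively note that item~(2) preceding the statement gives $\delta(Y) \le \delta_\infty^{(p)}(Y) \le -\lambda(Y) = 1$, which combined with $d(\Sigma(2,3,5)) = 2$ again forces the value $1$. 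Given that Theorems~\ref{thm:free0} and~\ref{thm:free1} are already in hand, there is no serious obstacle in the corollary itself; the only content not supplied by those two theorems is the pair of $d$-invariant evaluations in the exceptional cases, which is routine.
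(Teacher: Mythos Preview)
Your proposal is correct and follows essentially the same route as the paper: combine Theorem~\ref{thm:free0} with Theorem~\ref{thm:free1} to get strict inequality outside the two listed cases and equality in them, then identify $\delta(Y)=1$ in the exceptional cases via the known $d$-invariant computations (the paper cites \cite[\S 8]{os} for this last step).
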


\begin{corollary}\label{cor:ne}
Let $Y = \Sigma(a_1 , a_2 , \dots , a_r)$ be a Brieskorn homology sphere and let $m>1$ be an integer not dividing $a_1 \cdots a_r$. Suppose that $W$ is smooth rational homology $4$-ball bounding $Y$ and that $m$ does not divide $| H^2(X ; \mathbb{Z})|$. Then the $\mathbb{Z}_m$-action on $Y$ does not extend smoothly to $W$. 
\end{corollary}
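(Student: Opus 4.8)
The plan is to argue by contradiction, reducing to a prime-order subgroup and then playing the vanishing statement of Proposition~7.6 of \cite{bh} against the strict inequality of Corollary~\ref{cor:dinffree0}. Suppose the $\mathbb{Z}_m$-action on $Y$ extends smoothly over $W$. First I would choose a prime $p$ dividing $m$ with $p \nmid a_1 \cdots a_r$ and $p \nmid |H^2(W;\mathbb{Z})|$; the hypotheses that $m$ divides neither $a_1\cdots a_r$ nor $|H^2(W;\mathbb{Z})|$ are what make such a choice possible. For such a $p$, the subgroup $\mathbb{Z}_p \subset \mathbb{Z}_m$ is the order-$p$ subgroup of the Seifert circle action, this action on $Y$ is free since $p \nmid a_1\cdots a_r$, and the given extension restricts to a smooth extension of this free $\mathbb{Z}_p$-action over $W$.

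Next I would note that, since $p \nmid |H^2(W;\mathbb{Z})|$, the group $\mathbb{Z}_p$ cannot act freely on the finite set of spin$^c$-structures on $W$, so $W$ carries a $\mathbb{Z}_p$-invariant spin$^c$-structure. The rational-homology-ball form of Proposition~7.6 of \cite{bh} then applies and yields $\delta^{(p)}_j(Y) = \delta^{(p)}_j(-Y) = 0$ for all $j \ge 0$, hence $\delta^{(p)}_\infty(Y) = 0$. On the other hand, since $W$ is a rational homology $4$-ball, $Y$ is rationally homology cobordant to $S^3$, so the Ozsv\'ath--Szab\'o $d$-invariant vanishes and $\delta(Y) = d(Y)/2 = 0$; in particular $Y$ is neither $\Sigma(2,3,5)$ nor $\Sigma(2,3,11)$, for which $\delta(Y) = 1$. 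Corollary~\ref{cor:dinffree0} therefore gives the strict inequality $\delta^{(p)}_\infty(Y) > \delta(Y) = 0$, which contradicts $\delta^{(p)}_\infty(Y) = 0$. This contradiction establishes the non-extension.

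The hard part will be the reduction to a suitable prime $p$: if every prime dividing $m$ divides $a_1\cdots a_r$, the restricted subgroup action is no longer free and the free-action input of Corollary~\ref{cor:dinffree0} is unavailable, so one must instead invoke the non-free prime-order non-extension result, which needs the rank comparison ${\rm rk}(HF_{red}^+(Y)) > p \cdot {\rm rk}(HF_{red}^+(Y/\mathbb{Z}_p))$; likewise, if a prime factor of $m$ divides $|H^2(W;\mathbb{Z})|$ one must check that an appropriate invariant spin$^c$-structure still exists. Once a good $p$ is in hand, the remaining steps — the vanishing of $\delta^{(p)}_j(Y)$ from Proposition~7.6 and the strict positivity of $\delta^{(p)}_\infty(Y)$ from Corollary~\ref{cor:dinffree0} — are immediate.
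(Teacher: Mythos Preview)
Your approach is essentially identical to the paper's: reduce to a prime divisor $p$ of $m$ not dividing $|H^2(W;\mathbb{Z})|$, note that $\delta(Y)=0$ since $Y$ bounds a rational homology ball (which also disposes of the exceptional cases $\Sigma(2,3,5)$ and $\Sigma(2,3,11)$), and then invoke Corollary~\ref{cor:dinffree0} to obtain $\delta_\infty^{(p)}(Y)>0$, contradicting the vanishing forced by the extension. Your final paragraph overstates the difficulty: the paper does nothing more than pick such a $p$ and apply Corollary~\ref{cor:dinffree0} directly, so there is no further ``hard part'' beyond what you have already sketched.
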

\begin{proof}
Let $p$ be a prime divisor of $m$ which does not divide $| H^2( X ; \mathbb{Z})|$. It suffices to show that the subgroup $\mathbb{Z}_p \subseteq \mathbb{Z}_m$ does not extend over $W$. Since $Y$ is bounded by a rational homology $4$-ball we have $\delta(Y) = 0$. Then Corollary \ref{cor:dinffree0} implies that $\delta_\infty^{(p)}(Y) > 0$, unless $Y = \Sigma(2,3,5)$ or $Y = \Sigma(2,3,11)$ and $p=5$. However these cases do not bound rational homology $4$-balls as they have $\delta(Y) = 1$. So $\delta_\infty^{(p)}(Y) > 0$ which implies that the $\mathbb{Z}_p$-action does not extend smoothly to $W$.
\end{proof}

The $r=3$ case of the above result was proven by Anvari--Hambleton \cite{anha1}, under the assumption that the bounding manifold is contractible. We note that there is a conjecture that Brieskorn spheres with $r > 3$ can not bound contractible manifolds (see, for example \cite[Problem I]{savk}). On the other hand, there are many examples of Brieskorn spheres which bound rational homology balls, but not integer homology balls \cite{fs2,al,savk2}. Thus Corollary \ref{cor:ne} is a non-trivial result.

We can also show that for all sufficiently large primes, $\delta_\infty^{(p)}(Y)$ equals minus the Casson invariant $-\lambda(Y)$.

\begin{theorem}\label{thm:free2}
Let $Y = \Sigma(a_1 , a_2 , \dots , a_r)$ be a Brieskorn homology sphere and let $p$ be a prime not dividing $a_1 \cdots a_r$. Suppose that $p > N$ where 
\[
N = a_1 \cdots a_r \left( (r-2) - \sum_{i=1}^r \frac{1}{a_i} \right).
\]
Then $\delta_\infty^{(p)}(Y) = -\lambda(Y)$.
\end{theorem}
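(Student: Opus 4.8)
The plan is to combine Theorems \ref{thm:free0} and \ref{thm:free1} with a direct estimate on the rank of $HF^+_{red}(Y)$. By Theorem \ref{thm:free0} we have $\delta^{(p)}_\infty(Y) - \delta(Y) = {\rm rk}(HF^+_{red}(Y)) - {\rm rk}(HF^+_{red}(Y_0,\mathfrak{s}_0))$ for any spin$^c$-structure $\mathfrak{s}_0$ on $Y_0 = Y/\mathbb{Z}_p$. On the other hand, item (3) in the list preceding Theorem \ref{thm:free0} (or the general facts about Brieskorn spheres) gives $\delta(Y) - (-\lambda(Y)) = \delta(Y) + \lambda(Y)$, and there is a classical formula (due to Fintushel--Stern / Neumann--Wahl, and reinterpreted in Heegaard Floer terms by Ozsv\'ath--Szab\'o and Rustamov) expressing $-\lambda(Y)$ in terms of $\delta(Y)$ and ${\rm rk}(HF^+_{red}(Y))$. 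Concretely, the identity I want is
\[
-\lambda(Y) \;=\; \delta(Y) + {\rm rk}\big(HF^+_{red}(Y)\big),
\]
which holds for Brieskorn homology spheres because $HF^+_{red}(Y)$ is supported in even degrees (so the Euler characteristic of the reduced part is $\pm{\rm rk}$, with a sign governed by the grading convention) and the Casson invariant is the Euler characteristic of $HF^+_{red}$ up to the $d$-invariant correction. Granting this, the theorem reduces to showing that ${\rm rk}(HF^+_{red}(Y_0,\mathfrak{s}_0)) = 0$ once $p > N$, i.e. that $Y_0$ is an $L$-space for large $p$.

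The second and main step is therefore to show $HF^+_{red}(Y_0,\mathfrak{s}_0) = 0$ when $p > N$. Here $Y_0 = \Sigma(a_1,\dots,a_r)/\mathbb{Z}_p$ is again a Seifert rational homology sphere: quotienting by the order-$p$ subgroup of the circle action has the effect of rescaling the Seifert invariants, and one checks that $Y_0$ is the Seifert space with the same base orbifold $S^2$ but with the degree of the orbifold line bundle changed, so that its first homology has order $p^2$ (or $p$, depending on conventions) and the Euler number scales. For Seifert rational homology spheres there is an explicit description of $HF^+$ in each spin$^c$-structure (Ozsv\'ath--Szab\'o, Ne\'methi, plumbing calculus): the reduced part vanishes precisely when the corresponding "delta sequence"/Seifert data has no monotonicity failure, equivalently when the negative-definite plumbing graph is "bad-vertex free" in the relevant sense. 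The number $N = a_1\cdots a_r\big((r-2) - \sum 1/a_i\big)$ is exactly $|e(Y)|^{-1}$-type data — in fact $N$ is, up to sign, the Euler number of the Seifert fibration times $a_1\cdots a_r$, equivalently $-3$ times twice the Casson invariant-type quantity that controls the "length" of the staircase. I would argue that after dividing by $\mathbb{Z}_p$, all the instability that produces nonzero $HF^+_{red}$ gets pushed above the range where it can occur once $p$ exceeds this length, so each spin$^c$-structure on $Y_0$ becomes an $L$-space summand.

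The cleanest route to this last point is probably not to recompute $HF^+(Y_0)$ from scratch but to use the transfer/averaging relationship already implicit in Theorem \ref{thm:free0}: we have the inequality ${\rm rk}(HF^+_{red}(Y)) \ge {\rm rk}(HF^+_{red}(Y_0,\mathfrak{s}_0))$ together with the fact that summing ${\rm rk}(HF^+_{red}(Y_0,\mathfrak{s}_0))$ over all $p$ (or $p^2$) spin$^c$-structures is comparable to ${\rm rk}(HF^+_{red}(Y))$ plus a correction; combined with the explicit Neumann--Wahl formula $\lambda(Y) = -\frac{1}{8}\big(\text{signature defect}\big)$ and the fact that the total rank ${\rm rk}(HF^+_{red}(Y))$ is bounded by a polynomial in the $a_i$ while the number of spin$^c$-structures on $Y_0$ grows like $p$, one gets that for $p$ large each individual summand must vanish. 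Making "$p > N$" the exact threshold will require matching the $d$-invariant corrections carefully, and I expect this bookkeeping — reconciling the grading shifts in the quotient with the precise value of $N$ — to be the main obstacle; the rest is assembling Theorems \ref{thm:free0}, \ref{thm:free1} and the classical Casson-invariant-to-$HF^+$ dictionary.
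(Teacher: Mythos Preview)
Your overall skeleton is right and matches the paper: reduce via Theorem~\ref{thm:free0} to
\[
\delta^{(p)}_\infty(Y) \;=\; \delta(Y) + {\rm rk}\big(HF^+_{red}(Y)\big) - {\rm rk}\big(HF^+_{red}(Y_0,\mathfrak{s}_0)\big),
\]
then use the Casson identity ${\rm rk}(HF^+_{red}(Y)) = -\delta(Y) - \lambda(Y)$ for Brieskorn spheres (this is Proposition~\ref{prop:deltaprop}\,(1); note that in the paper's orientation $HF^+_{red}(Y)$ sits in \emph{odd} degrees, not even, but the sign comes out the same), so that the statement is equivalent to ${\rm rk}(HF^+_{red}(Y_0,\mathfrak{s}_0)) = 0$ for $p > N$. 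Theorem~\ref{thm:free1} plays no role here, so you can drop it from the plan.

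The gap is in how you propose to establish ${\rm rk}(HF^+_{red}(Y_0,\mathfrak{s}_0)) = 0$. The averaging/pigeonhole idea in your last paragraph does not work: knowing that the total rank over all spin$^c$-structures is bounded while their number grows with $p$ only tells you the \emph{average} rank tends to zero, not that any particular $\mathfrak{s}_0$ has rank zero, and it certainly cannot produce the sharp threshold $p > N$. The ``matching of $d$-invariant corrections'' you anticipate as bookkeeping is in fact the whole content, and there is no soft argument that supplies it.

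What the paper actually does (Proposition~\ref{prop:deltap}) is the first option you mention and then set aside: it computes the graded-roots $\Delta$-function of the quotient directly. Writing $Y_0 = M(pe_0,(a_1,pb_1),\dots,(a_r,pb_r))$ with the canonical spin$^c$-structure, one has
\[
\Delta_p(n) \;=\; 1 + np\,e_0 - \sum_{j=1}^r \Big\lceil \tfrac{npb_j}{a_j} \Big\rceil
\;=\; 1 + \frac{np}{a_1\cdots a_r} - \sum_{j=1}^r f\!\left(\frac{npb_j}{a_j}\right),
\]
where $f(x) = \lceil x\rceil - x \le 1 - 1/a_j$. This gives the elementary estimate
\[
\Delta_p(n) \;\ge\; 1 + \frac{np}{a_1\cdots a_r} - r + \sum_{j=1}^r \frac{1}{a_j},
\]
and the right-hand side is $> -1$ (hence $\Delta_p(n)\ge 0$, being an integer) exactly when $np > N$. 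Thus if $p > N$ then $\Delta_p(n)\ge 0$ for all $n\ge 1$, the $\tau$-function is monotone, and $HF^+_{red}(-Y_0,\mathfrak{s}_0) = 0$. This is a two-line computation once you write down $\Delta_p$, and it pins down $N$ on the nose; you should replace the transfer/averaging paragraph with this.
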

\begin{proof}
This follows from Theorem \ref{thm:free0} and Proposition \ref{prop:deltap}.
\end{proof}

We now consider the case of branched coverings. Let $Y = \Sigma(a_1, \dots , a_r)$ be a Brieskorn homology sphere and let $p$ be a prime dividing $a_1 \cdots a_r$. Without loss of generality we may assume that $p$ divides $a_1$. Then the quotient space $Y_0 = Y/\mathbb{Z}_p$ is the Brieskorn homology sphere $\Sigma(a_1/p , a_2 , \dots , a_r)$ and $Y \to Y_0$ is a cyclic branched covering. Our main result is the following:

\begin{theorem}\label{thm:deltabranch}
We have that 
\[
\delta(-Y) - \delta^{(p)}_\infty(-Y) \ge {\rm rk}( HF_{red}^+(Y)) - p \, {\rm rk}( HF_{red}^+(Y_0) ).
\]
\end{theorem}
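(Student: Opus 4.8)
The plan is to analyze the equivariant Seiberg--Witten--Floer cohomology of $-Y$ for a branched covering $Y \to Y_0 = Y/\mathbb{Z}_p$, in parallel with the free case treated in Theorem \ref{thm:free0}, but accounting for the fixed-point set. Recall that for a Brieskorn homology sphere the reduced Floer cohomology is concentrated in a way that makes the Seiberg--Witten--Floer spectrum a ``sum'' of a sphere piece and free summands; in the free quotient case the key identity $\delta^{(p)}_\infty(Y) - \delta(Y) = {\rm rk}(HF^+_{red}(Y)) - {\rm rk}(HF^+_{red}(Y_0,\mathfrak{s}_0))$ came from comparing the equivariant cohomology of the Floer spectrum of $Y$ with that of $Y_0$. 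Here I would first set up the corresponding comparison: the $\mathbb{Z}_p$-equivariant Borel cohomology $H^*_{\mathbb{Z}_p}(SWF(-Y))$ localizes to the fixed-point contribution, and the quotient $SWF(-Y_0)$ relates to $SWF(-Y)$ via a transfer/pushforward whose behaviour differs from the free case precisely on the reduced part.

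The key steps, in order: (i) recall from \cite{bh} the definition of $\delta^{(p)}_j$ as a minimal degree in which a certain $U$-power acts nontrivially on the $\mathbb{Z}_p$-equivariant cohomology of the Floer spectrum, and the ordinary $\delta = d/2$ as its non-equivariant analogue; (ii) use the known structure of $HF^+(Y)$ for Brieskorn spheres (it is a sum of one ``tower'' $\mathcal{T}^+$ and finitely many finite-dimensional $\mathbb{F}[U]$-modules of total rank ${\rm rk}(HF^+_{red}(Y))$), together with the analogous structure for $Y_0$; (iii) analyze how the reduced summands of $-Y$ behave under the $\mathbb{Z}_p$-action, using that the branched cover has a circle's worth of fixed points, so that unlike the free case the equivariant cohomology picks up extra copies of $H^*(B\mathbb{Z}_p)$-torsion coming from the fixed locus; (iv) combine the localization theorem with a count of how many $U$-steps are lost to these torsion classes to get a lower bound rather than an equality on $\delta(-Y) - \delta^{(p)}_\infty(-Y)$; and (v) bound the fixed-point contribution by $p \cdot {\rm rk}(HF^+_{red}(Y_0))$, which is where the factor of $p$ enters — roughly, each reduced class of $Y_0$ can contribute with multiplicity at most $p$ to the ``visible'' reduced part of $Y$ after accounting for the quotient map on chains.

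I expect the main obstacle to be step (iv)–(v): controlling the interaction between the fixed-point set and the reduced Floer cohomology precisely enough to extract the inequality with the correct constant $p$. In the free case one has a clean fibration $SWF(Y) \to SWF(Y)/\mathbb{Z}_p$ with no fixed points, so the equivariant and Borel-quotient computations agree and yield an equality; in the branched case the fixed circle means the Smith-type inequality comparing $\mathbb{F}_p$-Betti numbers of $Y$ and $Y_0$ only gives $\dim H^*(Y;\mathbb{F}_p) \le \dim H^*(Y_0;\mathbb{F}_p) + (\text{fixed contribution})$, and translating this into a statement about $\delta$-invariants requires tracking the $U$-module structure carefully. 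The inequality (as opposed to equality) in the statement is the honest reflection of this: the fixed-point data is only controlled from one side. I would handle this by working with the exact triangle relating the equivariant cohomology of $SWF(-Y)$, its $\mathbb{Z}_p$-fixed points, and the quotient, then applying $U$-localization to isolate the asymptotic ($j \to \infty$) behaviour where $\delta^{(p)}_\infty$ lives, and finally invoking the explicit rank bound ${\rm rk}(HF^+_{red}(Y)) \le p \cdot {\rm rk}(HF^+_{red}(Y_0)) + (\text{correction})$ with the correction absorbed into the $\delta$-difference. The routine parts — identifying $\delta = d/2$ with the non-equivariant tower bottom, and the Brieskorn-sphere structure theorems — I would cite rather than reprove.
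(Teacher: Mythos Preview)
Your plan is fundamentally different from the paper's proof, and the place where you yourself flag the ``main obstacle'' is a genuine gap, not just a technicality. In the free case the engine is the Lidman--Manolescu identification $I(Y,\mathfrak{s})^{\mathbb{Z}_p} \simeq I(Y_0,\mathfrak{s}_0)$, which lets one read off the $S$-localised equivariant cohomology of $SWF(Y)$ directly as $HSW^*(Y_0,\mathfrak{s}_0)\otimes H^*_G$. In the branched case no such identification is available: the $\mathbb{Z}_p$-fixed configurations on $Y$ are not the configurations on $Y_0$, and you never say what replaces this. Your step (v), bounding the fixed-point contribution by $p\cdot {\rm rk}(HF^+_{red}(Y_0))$, is exactly where this missing input would have to enter, and the heuristic ``each reduced class of $Y_0$ can contribute with multiplicity at most $p$'' has no mechanism behind it. Smith-type inequalities and the exact triangle you mention control total $\mathbb{F}_p$-dimensions, not the $U$-module structure or the specific grading shifts that govern $\delta^{(p)}_\infty$, so they will not by themselves produce the factor of $p$ against ${\rm rk}(HF^+_{red}(Y_0))$.

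The paper does something entirely different and avoids the spectrum-level fixed-point analysis altogether. It builds a $4$-manifold: take the negative definite plumbing $W_0$ bounded by $Y_0$, push off a Seifert surface for the branch knot into $W_0$, and form the $p$-fold branched cover $W \to W_0$, so that $\partial W = Y$ with its $\mathbb{Z}_p$-action extended. One then applies the equivariant Fr{\o}yshov inequality of \cite{bh} to $W$ with a $\mathbb{Z}_p$-invariant spin$^c$-structure pulled back from $W_0$; optimising over characteristics on $W_0$ gives $\delta(Y_0)$ by N\'emethi's sharpness result, and the remaining signature defect $(p\sigma(W_0)-\sigma(W))/8$ is identified with $p\lambda(Y_0)-\lambda(Y)$ via the Collin--Saveliev equivariant Casson invariant. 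The identity ${\rm rk}(HF^+_{red})=-\delta-\lambda$ for Brieskorn spheres then converts everything into the stated rank inequality. So the factor of $p$ arises from $c_1(\pi^*\mathfrak{s})^2 = p\,c^2$ on the branched cover, not from any multiplicity count in Floer cohomology.
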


\begin{remark}
Karakurt--Lidman have shown that ${\rm rk}( HF_{red}^+(Y)) \ge p \, {\rm rk}( HF_{red}^+(Y_0) )$. Thus the right hand side of the inequality in Theorem \ref{thm:deltabranch} is non-negative. Moreover, if $\delta_\infty^{(p)}(-Y) = \delta(-Y)$ then we must have an equality: ${\rm rk}( HF_{red}^+(Y)) = p \, {\rm rk}( HF_{red}^+(Y_0) )$.
\end{remark}

\begin{theorem}
Let $Y = \Sigma(a_1 , a_2 , \dots , a_r)$ be a Brieskorn homology sphere and let $p$ be a prime dividing $a_1 \cdots a_r$. Suppose that $W$ is a rational homology $4$-ball bounding $Y$ and that $p$ does not divide the order of $H^2(W;\mathbb{Z})$. If ${\rm rk}( HF_{red}^+(Y)) > p \, {\rm rk}( HF_{red}^+(Y_0) )$, then the $\mathbb{Z}_p$-action on $Y$ does not extend smoothly to $W$.
\end{theorem}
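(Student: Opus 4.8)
The plan is to mimic the proof of Corollary~\ref{cor:ne}, with Theorem~\ref{thm:deltabranch} playing the role that Corollary~\ref{cor:dinffree0} played there. Concretely, I would assume the $\mathbb{Z}_p$-action extends smoothly over $W$, deduce that all of the equivariant delta invariants $\delta_j^{(p)}(\pm Y)$ vanish, and then contradict this using the lower bound of Theorem~\ref{thm:deltabranch} together with the vanishing of the ordinary $d$-invariant $d(Y)$.

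The topological input is standard: since $Y = \Sigma(a_1,\dots,a_r)$ is an integral homology sphere bounding a rational homology $4$-ball, its Ozsv\'ath--Szab\'o $d$-invariant vanishes, so $d(Y) = 0$ and hence $\delta(Y) = \delta(-Y) = 0$. Now suppose for contradiction that the $\mathbb{Z}_p$-action on $Y$ extends smoothly over $W$. Because $p$ does not divide $|H^2(W;\mathbb{Z})|$, the group $\mathbb{Z}_p$ cannot act freely on the set of spin$^c$-structures of $W$ (which is a torsor over $H^2(W;\mathbb{Z})$), so $W$ admits a $\mathbb{Z}_p$-invariant spin$^c$-structure. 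Hence the rational homology $4$-ball refinement of Proposition~7.6 of \cite{bh} applies and gives $\delta_j^{(p)}(Y) = \delta_j^{(p)}(-Y) = 0$ for all $j \ge 0$; in particular $\delta_\infty^{(p)}(-Y) = 0$.

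It then remains to feed this into Theorem~\ref{thm:deltabranch}. Substituting $\delta(-Y) = 0$ and $\delta_\infty^{(p)}(-Y) = 0$ gives
\[
0 \;=\; \delta(-Y) - \delta_\infty^{(p)}(-Y) \;\ge\; {\rm rk}\bigl( HF_{red}^+(Y) \bigr) - p\,{\rm rk}\bigl( HF_{red}^+(Y_0) \bigr),
\]
which contradicts the hypothesis ${\rm rk}( HF_{red}^+(Y) ) > p\,{\rm rk}( HF_{red}^+(Y_0) )$. Therefore the $\mathbb{Z}_p$-action does not extend smoothly over $W$. I do not expect a genuine obstacle in this deduction: essentially all of the difficulty has been absorbed into Theorem~\ref{thm:deltabranch} (which in turn rests on the behaviour of equivariant Seiberg--Witten--Floer cohomology under the branched covering $Y \to Y_0$ and on the structure of $HF^+$ of Brieskorn spheres). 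The only steps here requiring a little care are checking that $W$ carries a $\mathbb{Z}_p$-invariant spin$^c$-structure and invoking the standard fact that an integral homology sphere bounding a rational homology $4$-ball has $d = 0$.
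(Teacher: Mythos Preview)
Your proposal is correct and is exactly the argument the paper has in mind: the theorem is stated immediately after Theorem~\ref{thm:deltabranch} with no separate proof, since the deduction is the evident analogue of Corollary~\ref{cor:ne} (using the rational homology ball refinement of \cite[Proposition~7.6]{bh} discussed just after that proposition). Your treatment of the two small points --- existence of a $\mathbb{Z}_p$-invariant spin$^c$-structure from $p \nmid |H^2(W;\mathbb{Z})|$, and $d(Y)=0$ for an integral homology sphere bounding a rational homology ball --- is also how the paper handles them.
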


The $r=3$ case of this result was proved by Anvari--Hambleton \cite{anha2} (in the integral homology case) without requiring the assumption that ${\rm rk}( HF_{red}^+(Y)) > p \, {\rm rk}( HF_{red}^+(Y_0) )$.

We expect that the condition ${\rm rk}( HF_{red}^+(Y)) = p \, {\rm rk}( HF_{red}^+(Y_0) )$ is rarely satisfied, however there are some cases where it does hold. One family of examples is given by $Y = \Sigma(2,3,30n+5)$ and $p=5$, in which case $Y_0 = \Sigma(2,3,6n+1)$ and ${\rm rk}( HF_{red}^+(Y)) = 5 \, {\rm rk}( HF_{red}^+(Y_0) ) = 5n$. All of these examples have $\delta(Y) = 1$, so such a $Y$ can not bound a contractible $4$-manifold. We suspect that there are no examples where ${\rm rk}( HF_{red}^+(Y)) = p \, {\rm rk}( HF_{red}^+(Y_0) )$ and $Y$ bounds an integral homology $4$-ball.

\subsection{Non-extension results for positive definite $4$-manifolds}

Our equivariant $\delta$-invariants can also be used to obstruct the extension of the $\mathbb{Z}_p$-action over a positive definite $4$-manifold bounding $Y$. First, we have the following result, which is a consequence of \cite[Theorem 5.3]{bh}:

\begin{proposition}\label{prop:wy}
Let $Y$ be an integral homology $3$-sphere on which $\mathbb{Z}_p$ acts by orientation preserving diffeomorphisms. Suppose that $Y$ is bounded by a smooth, compact, oriented, $4$-manifold $W$ with positive definite intersection form and with $b_1(W) = 0$. Suppose that the $\mathbb{Z}_p$ extends to a smooth, homologically trivial action on $W$. Then
\[
\min_c \left\{ \frac{ c^2 - rk( H^2(W ; \mathbb{Z}) ) }{8}  \right\} \ge \delta_0^{(p)}(Y)
\]
where the minimum is taken over all characteristic elements of $H^2( W ; \mathbb{Z})$.
\end{proposition}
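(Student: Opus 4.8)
The proposition is the positive-definite case of the equivariant Fr\o yshov inequality, and the plan is to deduce it from \cite[Theorem 5.3]{bh} by excising a $\mathbb{Z}_p$-invariant $4$-ball from $W$, thereby presenting $W$ as an equivariant cobordism one of whose ends is $S^3$ with a linear action.

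First I would produce the invariant ball. Since the action on $W$ is homologically trivial and $b_1(W)=0$ --- whence also $b_3(W)=0$, because $\partial W = Y$ is an integral homology sphere --- the Lefschetz number of a generator $g\in\mathbb{Z}_p$ equals $\chi(W)=1+{\rm rk}\,H^2(W;\mathbb{Z})\ge 1$, so $g$, and hence all of $\mathbb{Z}_p$, has a fixed point $x$. By Bochner linearisation the action near $x$ is smoothly conjugate to the linear $\mathbb{Z}_p$-action on $T_xW$, so there is a small invariant geodesic ball $B\ni x$ whose boundary $S$ is a round $3$-sphere carrying the corresponding linear action. Set $W^{\circ}=W\setminus\operatorname{int}B$; then $W^{\circ}$ is a smooth compact oriented $4$-manifold with a $\mathbb{Z}_p$-action, with $\partial W^{\circ}=Y\sqcup(-S)$, with $b_1(W^{\circ})=0$, with positive definite intersection form, with $H^2(W^{\circ};\mathbb{Z})\cong H^2(W;\mathbb{Z})$ as $\mathbb{Z}_p$-modules, and with $\chi(W^{\circ})=\sigma(W^{\circ})={\rm rk}\,H^2(W;\mathbb{Z})$.

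Next I would feed $W^{\circ}$ into \cite[Theorem 5.3]{bh}. Two inputs are needed. The equivariant Seiberg--Witten--Floer homotopy type of $S$ with its linear $\mathbb{Z}_p$-action is that of the basepoint, so $\delta_0^{(p)}(S)=0$ and $S$ behaves as $S^3$ with the trivial action; this is also the step that confines the conclusion to the single invariant $\delta_0^{(p)}$ rather than to the full family $\{\delta_j^{(p)}\}$, as \cite[Theorem 5.3]{bh} controls only the bottom of the equivariant tower. Because $\mathbb{Z}_p$ acts trivially on $H^2(W;\mathbb{Z})$, every characteristic element $c$ is invariant and is realised by a $\mathbb{Z}_p$-invariant spin$^c$-structure $\mathfrak{s}_c$ on $W^{\circ}$ with $c_1(\mathfrak{s}_c)=c$, so the theorem applies for each such $c$ --- read off, after reversing orientation, from the resulting negative definite filling of $-Y$ together with the duality $\mathrm{SWF}(-Y)\simeq D\,\mathrm{SWF}(Y)$, which is the form of the inequality giving an \emph{upper} bound for a definite filling. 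The dimension shift of the cobordism, evaluated using $b_1(W^{\circ})=0$, the definiteness of $W^{\circ}$, and the numerics above, works out to $\tfrac18\big(c^2-{\rm rk}\,H^2(W;\mathbb{Z})\big)$; minimising over characteristic $c$ (equivalently over invariant spin$^c$-structures) yields $\delta_0^{(p)}(Y)\le\min_c\tfrac18\big(c^2-{\rm rk}\,H^2(W;\mathbb{Z})\big)$.

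I expect the main obstacle to be the treatment of the $S^3$-end: pinning down the equivariant Floer homotopy type of a $3$-sphere with an arbitrary linear $\mathbb{Z}_p$-action (in particular the virtual representation by which one must stabilise) and checking that it does not perturb the $\delta_0^{(p)}$-level, together with getting the direction of the inequality right via the dual cobordism. By contrast, once \cite[Theorem 5.3]{bh} is in hand, reducing the dimension shift to $\tfrac18\big(c^2-{\rm rk}\,H^2(W;\mathbb{Z})\big)$ is a routine Euler-characteristic and signature computation.
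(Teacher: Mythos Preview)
Your approach is correct but unnecessarily elaborate compared with the paper's. The paper applies \cite[Theorem~5.3]{bh} directly to $-W$, viewing $Y$ as the sole (ingoing) boundary component; there is no need to excise a ball or produce a second $S^3$-end. Given any characteristic $c\in H^2(W;\mathbb{Z})$, there is a spin$^c$-structure $\mathfrak{s}$ with $c_1(\mathfrak{s})=c$, and homological triviality of the $\mathbb{Z}_p$-action forces $\mathfrak{s}$ to be invariant; the theorem then yields $(c^2-{\rm rk}\,H^2(W;\mathbb{Z}))/8\ge\delta_0^{(p)}(Y)$ immediately, and one takes the minimum over $c$.

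Your route via the Lefschetz fixed-point argument and Bochner linearisation does work, and it is the right thing to do if one only has a version of the equivariant Fr\o yshov inequality stated for two-ended cobordisms. But it introduces exactly the complication you flag as the ``main obstacle'': controlling the equivariant Seiberg--Witten--Floer homotopy type of $S^3$ with an arbitrary linear $\mathbb{Z}_p$-action. The paper sidesteps this entirely because \cite[Theorem~5.3]{bh} already accommodates a $4$-manifold with a single boundary. Also, your remark that the $S^3$-end is what ``confines the conclusion to the single invariant $\delta_0^{(p)}$'' is slightly off: the restriction to $\delta_0^{(p)}$ comes from the shape of the equivariant Fr\o yshov inequality itself for a definite filling, not from the presence of a sphere boundary.
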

\begin{proof}
Suppose $\mathbb{Z}_p$ extends smoothly and homologically trivially to $W$. Take any characteristic $c \in H^2(X ; \mathbb{Z})$. Then there is a unique spin$^c$-structure $\mathfrak{s}$ with $\mathfrak{s} = c$. Since the action is homologically trivial it follows that $\mathfrak{s}$ is $\mathbb{Z}_p$-invariant. Now we apply \cite[Theorem 5.3]{bh} to $-W$ with $Y$ regarded as an ingoing boundary to obtain: $(c^2 - rk(H^2(W ; \mathbb{Z})))/8 \ge \delta_0^{(p)}(Y)$. Taking the minimum over all characteristics gives the result.
\end{proof}

\begin{corollary}\label{cor:ne2}
Let $W$ and $Y$ be as in Proposition \ref{prop:wy}. If $\delta_0^{(p)}(Y) > 0$ or $\delta^{(p)}_\infty(-Y) < 0$. Then the $\mathbb{Z}_p$-action on $Y$ does not extend smoothly and homologically trivially to $W$.
\end{corollary}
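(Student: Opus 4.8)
The plan is to derive this by contradiction from Proposition \ref{prop:wy}, combined with the lattice-theoretic fact that a positive definite unimodular form always carries a ``short'' characteristic vector. There are two cases, one for each hypothesis, and they run in parallel.

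Suppose first that $\delta_0^{(p)}(Y) > 0$ and, for contradiction, that the $\mathbb{Z}_p$-action extends smoothly and homologically trivially over $W$. Since $Y$ is an integral homology $3$-sphere, $H^2(W;\mathbb{Z})$ is torsion-free and the intersection form $Q_W$ is unimodular; by hypothesis $Q_W$ is positive definite and $b_1(W)=0$, so Proposition \ref{prop:wy} applies and gives
\[
\min_c \frac{ c^2 - \mathrm{rk}(H^2(W;\mathbb{Z})) }{8} \ \ge\ \delta_0^{(p)}(Y) \ >\ 0 ,
\]
the minimum ranging over characteristic elements $c \in H^2(W;\mathbb{Z})$. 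Hence $c^2 > \mathrm{rk}(H^2(W;\mathbb{Z}))$ for \emph{every} characteristic $c$. This contradicts Elkies' theorem, which asserts that any positive definite unimodular lattice of rank $n$ contains a characteristic vector of square at most $n$, applied here to $Q_W$. So the action does not extend, establishing the statement under the first hypothesis.

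Under the second hypothesis $\delta_\infty^{(p)}(-Y) < 0$ the argument has exactly the same shape, with Proposition \ref{prop:wy} replaced by the companion inequality obtained from \cite[Theorem 5.3]{bh} in the same manner but with $Y$ treated as an outgoing rather than ingoing boundary; this produces a bound of the form $\min_c (c^2 - \mathrm{rk}(H^2(W;\mathbb{Z})))/8 \ge -\delta_\infty^{(p)}(-Y) > 0$, and Elkies' theorem again yields a contradiction. As in the proof of Proposition \ref{prop:wy}, homological triviality of the extension is what guarantees that the spin$^c$-structure on $W$ with first Chern class $c$ is $\mathbb{Z}_p$-invariant, so that \cite[Theorem 5.3]{bh} is applicable. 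Since everything reduces to already-established Frøyshov-type inequalities together with a standard fact about definite lattices, there is no genuine obstacle; the only point requiring care is the second case, where one must track the orientation conventions of \cite[Theorem 5.3]{bh} to confirm that a positive definite filling of $Y$ controls $\delta_\infty^{(p)}(-Y)$ (rather than $\delta_0^{(p)}(Y)$ again), and that the short-characteristic-vector bound is invoked for the correct lattice, which up to overall sign is the same $Q_W$ in both cases.
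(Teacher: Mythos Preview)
Your first case is correct and matches the paper verbatim: Proposition~\ref{prop:wy} plus Elkies' theorem on short characteristic vectors.

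For the second case the paper takes a different and cleaner route. It does not run a parallel Froyshov argument; instead it uses the duality inequality
\[
\delta_0^{(p)}(Y)+\delta_\infty^{(p)}(-Y)\ \ge\ \delta_\infty^{(p)}(Y)+\delta_\infty^{(p)}(-Y)\ \ge\ 0
\]
from \cite{bh} to deduce that $\delta_\infty^{(p)}(-Y)<0$ forces $\delta_0^{(p)}(Y)>0$, reducing case~2 to case~1. Your proposed ``companion inequality'' $\min_c(c^2-\mathrm{rk})/8\ge -\delta_\infty^{(p)}(-Y)$ is in fact true, but the step you flag as ``requiring care'' is a genuine gap: switching $Y$ from ingoing to outgoing in \cite[Theorem~5.3]{bh} does not directly produce $-\delta_\infty^{(p)}(-Y)$ on the right-hand side. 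Applied to $-W$ (negative definite, $\partial(-W)=-Y$), the two natural versions of that theorem yield bounds involving $\delta_0^{(p)}(Y)$ (this is Proposition~\ref{prop:wy} itself) or $\delta_\infty^{(p)}(Y)$ (compare the application in the proof of Theorem~\ref{thm:branch}), not $-\delta_\infty^{(p)}(-Y)$. To pass from either of these to $-\delta_\infty^{(p)}(-Y)$ you must invoke exactly the duality inequality above, at which point you have reproduced the paper's argument by a longer path. So your second case as written is incomplete; the fix is to replace the orientation-switching manoeuvre with the one-line reduction via $\delta_0^{(p)}(Y)\ge -\delta_\infty^{(p)}(-Y)$.
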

\begin{proof}
First note that $\delta_0^{(p)}(Y) + \delta_\infty^{(p)}(-Y) \ge \delta_\infty^{(p)}(Y) + \delta_\infty^{(p)}(-Y) \ge 0$ \cite{bh}. So if $\delta_\infty^{(p)}(-Y) < 0$, then $\delta_0^{(p)}(Y) > 0$. So we can assume that $\delta_0^{(p)}(Y) > 0$. If the $\mathbb{Z}_p$-action extends smoothly and homologically trivially to $W$, then 
\[
\min_c \left\{ \frac{ c^2 - rk( H^2(W ; \mathbb{Z}) ) }{8}  \right\} \ge \delta_0^{(p)}(Y) > 0.
\]
But $Y$ is an integral homology $3$-sphere, so $H^2(W ; \mathbb{Z})$ is a unimodular integral lattice. A result of Elkies \cite{elk} implies that $\min_c  \{  (c^2 - rk( H^2(W ; \mathbb{Z}) ) )/8 \} \le 0$, which is a contradiction.
\end{proof}

Combined with our calculation of $\delta_0(Y), \delta_\infty(-Y)$ for Brieskorn spheres, we obtain the following non-extension result:

\begin{corollary}
Let $Y = \Sigma( a_1 , \dots , a_r)$ be a Brieskorn homology sphere and $p$ any prime. If $\delta_\infty(Y) > 0$ or $\delta_\infty(-Y) > 0$, then the $\mathbb{Z}_p$-action on $Y$ does not extend smoothly and homologically trivial to any smooth, compact, oriented, $4$-manifold $W$ with positive definite intersection form and with $b_1(W) = 0$ bounding $Y$.
\end{corollary}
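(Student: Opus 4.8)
The plan is to obtain the corollary by feeding the Brieskorn computations of Proposition~\ref{prop:deltaprop} into the general non-extension criterion of Corollary~\ref{cor:ne2}. First I would observe that $W$ and $Y$ satisfy exactly the standing hypotheses of Proposition~\ref{prop:wy}, so Corollary~\ref{cor:ne2} applies to the pair $(W,Y)$ and delivers the desired conclusion as soon as one knows $\delta_0^{(p)}(Y) > 0$ or $\delta_\infty^{(p)}(-Y) < 0$. Thus the whole task reduces to deriving one of these two inequalities from the hypothesis ``$\delta_\infty^{(p)}(Y) > 0$ or $\delta_\infty^{(p)}(-Y) > 0$''.

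The first alternative is handled at once by Proposition~\ref{prop:deltaprop}(1): since $Y = \Sigma(a_1,\dots,a_r)$ with $r \ge 3$ and all $a_i > 1$, the sequence $\delta_j^{(p)}(Y)$ is constant, so $\delta_0^{(p)}(Y) = \delta_\infty^{(p)}(Y)$, and $\delta_\infty^{(p)}(Y) > 0$ gives $\delta_0^{(p)}(Y) > 0$ and hence non-extension by Corollary~\ref{cor:ne2}. For the alternative involving $-Y$ I would first note that for a Brieskorn homology sphere parts~(2)--(3) of Proposition~\ref{prop:deltaprop} constrain this invariant: since $Y$ bounds a negative definite plumbing, $\delta(Y) = d(Y)/2 \ge 0$, and part~(3) then forces $\delta_\infty^{(p)}(-Y) \le -\delta(Y) \le 0$. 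In particular the clause $\delta_\infty^{(p)}(-Y) > 0$ can never be the one supplying the hypothesis, so this case reduces to the previous one; and should one instead be in the neighbouring regime $\delta_\infty^{(p)}(-Y) < 0$, that is already the second alternative of Corollary~\ref{cor:ne2}. In all cases the conclusion holds, and assembling the two cases proves the corollary.

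I do not expect a genuine difficulty here, since this is a packaging result whose analytic content is already in hand: the equivariant Froyshov-type inequality and the treatment of definite forms underlying Proposition~\ref{prop:wy} and Corollary~\ref{cor:ne2} (including the Elkies bound), together with the computations of ${\rm rk}(HF_{red}^+(\pm Y))$ underlying Proposition~\ref{prop:deltaprop}. The one place to be careful is bookkeeping: keeping the orientation conventions consistent and correctly translating between the index $0$ appearing in Corollary~\ref{cor:ne2} and the index $\infty$ appearing in the present statement. It is precisely the collapse $\delta_0^{(p)}(Y) = \delta_\infty^{(p)}(Y)$ of Proposition~\ref{prop:deltaprop}(1) that legitimises this translation, and reconciling the sign in the $-Y$ clause with parts~(2)--(3) is the remaining minor technical point.
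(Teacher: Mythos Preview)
Your approach is correct and matches the paper's: the corollary is stated immediately after Corollary~\ref{cor:ne2} with only the sentence ``Combined with our calculation of $\delta_0(Y), \delta_\infty(-Y)$ for Brieskorn spheres, we obtain the following non-extension result,'' so the intended proof is exactly the combination of Corollary~\ref{cor:ne2} with Proposition~\ref{prop:deltaprop} that you carry out. Two small points of bookkeeping: the constancy $\delta_0^{(p)}(Y)=\delta_\infty^{(p)}(Y)$ is part~(2) of Proposition~\ref{prop:deltaprop} in the paper's numbering (not~(1)), and the bound $\delta_j^{(p)}(-Y)\le -\delta(Y)$ is part~(3); also, your claim that $\delta(Y)\ge 0$ (used to render the $-Y$ clause vacuous) is not contained in Proposition~\ref{prop:deltaprop} itself but follows, as you indicate, from the non-equivariant Fr{\o}yshov inequality applied to the negative definite plumbing together with Elkies' theorem, so that step should be stated as a separate standard fact rather than attributed to Proposition~\ref{prop:deltaprop}.
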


In particular, if $p$ does not divide $a_1 \cdots a_r$ and $p > N = a_1 \cdots a_r \left( (r-2) - \sum_{i=1}^r \frac{1}{a_i} \right)$, then $\delta_\infty(Y) = -\lambda(Y) > 0$ by Theorem \ref{thm:free2}. Hence for a given $Y$, the above non-extension result applies to all but finitely many primes.

Note that such a non-extension result does not exist if $W$ has a negative definite intersection form. Indeed for any $p$, the $\mathbb{Z}_p$-action on $Y$ extends smoothly and homologically trivially to any negative definite star-shaped plumbing bounding $Y$ \cite[\textsection 2]{or}.

\subsection{Non-extension results for connected sums}

Suppose that $Y_1, \dots , Y_m$ are Brieskorn homology spheres and $p$ is a prime such that for each $i$, the $\mathbb{Z}_p$-action on $Y_i$ is not free. Then we can form an equivariant connected sum $Y = Y_1 \# \cdots \# Y_m$ by attaching the summands to each other along fixed points. From \cite[Proposition 3.1]{bar} we have that $\delta_{j_1+ \cdots + j_m}^{(p)}(-Y) \le \sum_{k=1}^m \delta_{j_k}^{(p)}(-Y_k)$. Taking $j_1, \dots , j_m$ sufficiently large, we obtain
\[
\delta_{\infty}^{(p)}(-Y) \le \sum_{k=1}^m \delta_{\infty}^{(p)}(-Y_k).
\]
Then Theorem \ref{thm:deltabranch} implies that
\begin{equation}\label{equ:dest}
\delta(-Y) - \delta_{\infty}^{(p)}(-Y) \ge \sum_{k=1}^m  \left( {\rm rk}( HF_{red}^+(Y_k)) - p \, {\rm rk}( HF_{red}^+(Y_k/\mathbb{Z}_p) ) \right),
\end{equation}
which gives us the following result:
\begin{corollary}
Let $Y_1, \dots , Y_m$ be Brieskorn homology spheres and let $p$ be a prime such that for each $i$, the $\mathbb{Z}_p$-action on $Y_i$ is not free. Let $Y = Y_1 \# \cdots \# Y_m$ be the equivariant connected sum. Suppose that $W$ is a rational homology $4$-ball bounding $Y$ and that $p$ does not divide the order of $H^2(W ; \mathbb{Z})$. If ${\rm rk}( HF_{red}^+(Y_i)) > p \, {\rm rk}( HF_{red}^+(Y_i/\mathbb{Z}_p) )$ for some $i$, then the $\mathbb{Z}_p$-action on $Y$ does not extend smoothly to $W$.
\end{corollary}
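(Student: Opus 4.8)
The plan is to use the hypothesis to force $\delta_\infty^{(p)}(-Y) < 0$ and then to apply the equivariant Froyshov inequality, in the form of Proposition 7.6 of \cite{bh} relaxed to rational homology $4$-balls, to rule out the extension: were the action to extend over $W$, we would obtain $\delta_\infty^{(p)}(-Y) = 0$, a contradiction.

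To get $\delta_\infty^{(p)}(-Y) < 0$, I would begin from inequality \eqref{equ:dest}. Since $Y = Y_1 \# \cdots \# Y_m$ is an integral homology $3$-sphere bounding the rational homology $4$-ball $W$, we have $\delta(-Y) = 0$ (as $-Y$ bounds the rational homology ball $-W$; this is the same vanishing used in the proof of Corollary \ref{cor:ne}). Substituting into \eqref{equ:dest} gives
\[
-\delta_\infty^{(p)}(-Y) \ge \sum_{k=1}^m \left( {\rm rk}( HF_{red}^+(Y_k) ) - p\,{\rm rk}( HF_{red}^+(Y_k/\mathbb{Z}_p) ) \right).
\]
Each $Y_k \to Y_k/\mathbb{Z}_p$ is a cyclic branched cover of Brieskorn homology spheres, so the Karakurt--Lidman inequality ${\rm rk}( HF_{red}^+(Y_k) ) \ge p\,{\rm rk}( HF_{red}^+(Y_k/\mathbb{Z}_p) )$ holds for every $k$; hence every term of the sum is $\ge 0$, and by hypothesis the $i$-th term is $>0$. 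Therefore $\delta_\infty^{(p)}(-Y) < 0$.

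Now suppose for contradiction that the $\mathbb{Z}_p$-action on $Y$ extends smoothly over $W$. Because $p$ does not divide $|H^2(W;\mathbb{Z})|$, the group $\mathbb{Z}_p$ cannot act freely on the nonempty set of spin$^c$-structures on $W$, so $W$ admits a $\mathbb{Z}_p$-invariant spin$^c$-structure; the rational-homology-ball version of Proposition 7.6 of \cite{bh} then yields $\delta_j^{(p)}(-Y) = 0$ for all $j$, in particular $\delta_\infty^{(p)}(-Y) = 0$, contradicting the previous paragraph. I expect the main points requiring care to be the inputs imported from earlier rather than this assembly: that \eqref{equ:dest} applies, which needs the equivariant connected sum $Y$ to carry the expected $\mathbb{Z}_p$-action (the non-freeness of each $Y_i$ supplies the fixed points used to form the sum) together with Theorem \ref{thm:deltabranch} and the sub-additivity $\delta_{j_1 + \cdots + j_m}^{(p)}(-Y) \le \sum_k \delta_{j_k}^{(p)}(-Y_k)$ from \cite[Proposition 3.1]{bar}; and that Proposition 7.6 does indeed extend from integer to rational homology $4$-balls in the present setting. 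Everything else is routine.
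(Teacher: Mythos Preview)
Your argument is correct and is exactly the approach the paper takes: the corollary is stated immediately after inequality \eqref{equ:dest} with the words ``which gives us the following result,'' and the intended proof is precisely the one you wrote out---use $\delta(-Y)=0$, the Karakurt--Lidman nonnegativity of each summand, and the hypothesis to force $\delta_\infty^{(p)}(-Y)<0$, then invoke the rational-homology-ball version of Proposition 7.6 of \cite{bh} to obtain a contradiction.
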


Corollary \ref{cor:ne2} and (\ref{equ:dest}) also give us a non-extension result over positive-definite $4$-manifolds:
\begin{corollary}
Let $Y_1, \dots , Y_m$ be Brieskorn homology spheres and let $p$ be a prime such that for each $i$, the $\mathbb{Z}_p$-action on $Y_i$ is not free. Let $Y = Y_1 \# \cdots \# Y_m$ be the equivariant connected sum. Suppose that $W$ is a smooth, compact, oriented, $4$-manifold bounding $Y$, with positive definite intersection form and with $b_1(W) = 0$. If
\[
\delta(Y) + \sum_{k=1}^m  \left( {\rm rk}( HF_{red}^+(Y_k)) - p \, {\rm rk}( HF_{red}^+(Y_k/\mathbb{Z}_p) ) \right) > 0
\]
then the $\mathbb{Z}_p$-action on $Y$ does not extend smoothly and homologically trivially to $W$.
\end{corollary}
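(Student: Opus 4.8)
\textit{Proof proposal.} The plan is to deduce the statement directly from inequality (\ref{equ:dest}) and Corollary~\ref{cor:ne2}, so the whole argument is a short chain of inequalities once those are in place. First I would record the structural facts: the equivariant connected sum $Y = Y_1 \# \cdots \# Y_m$ (which is well defined precisely because each $\mathbb{Z}_p$-action on $Y_i$ is non-free, hence has fixed points to glue along, as explained just above (\ref{equ:dest})) is again an integral homology $3$-sphere equipped with an orientation-preserving $\mathbb{Z}_p$-action. Together with the hypotheses on $W$ (compact, oriented, $b_1(W)=0$, positive definite intersection form, $\partial W = Y$), the pair $(W,Y)$ therefore satisfies the standing assumptions of Proposition~\ref{prop:wy} and Corollary~\ref{cor:ne2}. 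Since $Y$ is an integral homology sphere, its $d$-invariant satisfies $d(-Y)=-d(Y)$, hence $\delta(-Y) = -\delta(Y)$; this is the one small identity the argument relies on.

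Next I would simply rearrange (\ref{equ:dest}). Write $R = \sum_{k=1}^m \left( {\rm rk}( HF_{red}^+(Y_k)) - p \, {\rm rk}( HF_{red}^+(Y_k/\mathbb{Z}_p) ) \right)$. By (\ref{equ:dest}) and $\delta(-Y) = -\delta(Y)$ we have
\[
\delta_\infty^{(p)}(-Y) \;\le\; \delta(-Y) - R \;=\; -\delta(Y) - R.
\]
The hypothesis of the Corollary is exactly $\delta(Y) + R > 0$, i.e. $-\delta(Y) - R < 0$, so we obtain $\delta_\infty^{(p)}(-Y) < 0$. Corollary~\ref{cor:ne2} then applies (in its ``$\delta_\infty^{(p)}(-Y) < 0$'' branch) and yields that the $\mathbb{Z}_p$-action on $Y$ does not extend smoothly and homologically trivially over $W$, which is what we wanted.

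The only point that requires any care — and it is already handled in the text — is the validity of (\ref{equ:dest}) in the present situation. This in turn rests on three ingredients: (a) Theorem~\ref{thm:deltabranch} applies to each summand $Y_k$, which is legitimate because the $\mathbb{Z}_p$-action on $Y_k$ is non-free and hence realizes $Y_k$ as a cyclic branched cover; (b) the $d$-invariant is additive under connected sum, so $\delta(-Y) = \sum_k \delta(-Y_k)$; and (c) the subadditivity $\delta_\infty^{(p)}(-Y) \le \sum_k \delta_\infty^{(p)}(-Y_k)$ coming from \cite[Proposition~3.1]{bar}, used with all indices sent to infinity. Beyond assembling these, there is no genuine obstacle: in particular nothing about $W$ is needed except that its intersection form is positive definite and unimodular, which is precisely what drives the Elkies-type contradiction already packaged inside Corollary~\ref{cor:ne2}.
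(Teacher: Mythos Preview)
Your proposal is correct and follows precisely the route the paper intends: the sentence preceding the corollary says it is obtained from Corollary~\ref{cor:ne2} together with (\ref{equ:dest}), and you carry out exactly this combination, rearranging (\ref{equ:dest}) to get $\delta_\infty^{(p)}(-Y) < 0$ from the hypothesis and then invoking Corollary~\ref{cor:ne2}. The auxiliary remarks you make about why (\ref{equ:dest}) holds simply recapitulate the discussion already given in the paragraph leading up to (\ref{equ:dest}).
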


\subsection{Structure of the paper}
The paper is structured as follows. In \textsection \ref{sec:esw} we recall the basic results on equivariant Seiberg--Witten--Floer cohomology from \cite{bh} and the associated knot concordance invariants. In \textsection \ref{sec:fdelta} we examine in great detail the spectral sequence relating equivariant and non-equivariant Floer cohomology and use this to deduce more refined information about the equivariant delta invariants. In \textsection \ref{sec:tab}, we study the Floer homology of Brieskorn spheres $\Sigma(a,b,c)$ and use this to compute the invariants $\theta^{(c)}(T_{a,b})$ leading to the proof of Theorem \ref{thm:theta1}. In \textsection \ref{sec:branched}, we consider the case where $\mathbb{Z}_p$ acts non-freely on $\Sigma(a_1, \dots , a_r)$ and prove Theorem \ref{thm:deltabranch}. Finally in \textsection \ref{sec:free}, we consider the case where $\mathbb{Z}_p$ acts freely on $\Sigma(a_1, \dots , a_r)$ and prove Theorems \ref{thm:free0}, \ref{thm:free1} and \ref{thm:free2}.

\section{Equivariant Seiberg--Witten--Floer cohomology and knot concordance invariants}\label{sec:esw}

\subsection{Seiberg--Witten--Floer cohomology}\label{sec:swf}

Let $Y$ be a rational homology $3$-sphere and $\mathfrak{s}$ a spin$^c$-structure. For such a pair $(Y,\mathfrak{s})$, Manolescu constructed an $S^1$-equivariant stable homotopy type whose equivariant (co)homology groups are isomorphic to the Heegaard Floer or Monopole Floer (co)homology groups of $(Y,\mathfrak{s})$ \cite{man}. We denote the $S^1$-equivariant reduced cohomology groups with coefficients in $\mathbb{F}$ by $HSW^*(Y , \mathfrak{s} ; \mathbb{F})$ and refer to them as the {\em Seiberg--Witten--Floer cohomology} of $(Y , \mathfrak{s})$. If the coefficient group is understood then we will write $HSW^*(Y , \mathfrak{s})$. If $Y$ is an integral homology $3$-sphere, then it has a unique spin$^c$-structure and in this case we simply write $HSW^*(Y)$. We have that $HSW^*(Y , \mathfrak{s})$ is a graded module over the ring $H^*_{S^1} = H^*_{S^1}( pt ; \mathbb{F})$. Note that $H^*_{S^1} \cong \mathbb{F}[U]$, where $deg(U) = 2$.

There exists a chain of isomorphisms relating Seiberg--Witten--Floer homology to monopole Floer homology \cite{lima2} and to Heegaard Floer homology \cite{klt1,klt2,klt3,klt4,klt5,cgh1,cgh2,cgh3,tau}. In particular we have isomorphisms
\[
HSW^*(Y , \mathfrak{s}) \cong HF_+^*(Y , \mathfrak{s})
\]
where $HF_+^*(Y , \mathfrak{s})$ denotes the plus version of Heegaard Floer cohomology with coefficients in $\mathbb{F}$. Unless stated otherwise, we will take our coefficient group $\mathbb{F}$ to be a field. Then the universal coefficient theorem implies that the Heegaard Floer cohomology $HF_+^*(Y , \mathfrak{s})$ is isomorphic to the Heegaard Floer homology $HF^+_*(Y , \mathfrak{s})$, except that the action of $\mathbb{F}[U]$ on $HF^+_*(Y,\mathfrak{s})$ gets replaced by its dual, so $deg(U) = 2$ whereas in Heegaard Floer homology one has $deg(U)=-2$. We will frequently identify $HSW^*(Y , \mathfrak{s})$ with $HF^+_*(Y , \mathfrak{s})$, equipped with the dual $\mathbb{F}[U]$-module structure.

Let $d(Y , \mathfrak{s})$ denote the Ozsv\'ath--Szab\'o $d$-invariant. Due to the isomorphism $HSW^*(Y , \mathfrak{s}) \cong HF^+_*(Y , \mathfrak{s})$, we have that $d(Y , \mathfrak{s})$ is the minimal degree $i$ for which there exists an $x \in HSW^i(Y , \mathfrak{s})$ with $U^k x \neq 0$ for all $k \ge 0$. For notational convenience we define $\delta(Y , \mathfrak{s}) = d(Y,\mathfrak{s})/2$. 

We define $HSW_{red}^*(Y , \mathfrak{s}) = \{ x \in HSW^*(Y , \mathfrak{s}) \; | \; U^k x = 0 \text{ for some } k \ge 0\}$. More generally, given an $\mathbb{F}[U]$-module $M$, we write $M_{red}$ for the submodule of elements $x \in M$ such that $U^k x = 0$ for some $k \ge 0$.

Recall that for any $(Y , \mathfrak{s})$, there is an isomorphism of $\mathbb{F}[U]$-modules
\[
HF^+(Y , \mathfrak{s}) \cong \mathbb{F}[U]_{d(Y,\mathfrak{s})} \oplus HF^+_{red}(Y , \mathfrak{s}),
\]
where for any $\mathbb{F}[U]$-module $M^*$ and any $d \in \mathbb{Q}$, we define $M^*_d$ by $(M^i_d) = M^{i-d}$. It follows that we similarly have an isomorphism
\[
HSW^*(Y , \mathfrak{s}) \cong \mathbb{F}[U]_{d(Y , \mathfrak{s})} \oplus HSW_{red}(Y , \mathfrak{s}).
\]

\subsection{Equivariant Seiberg--Witten--Floer cohomology}\label{sec:eswfc}

Let $Y$ be a rational homology $3$-sphere and suppose that $\tau \colon Y \to Y$ an orientation preserving diffeomorphism of order $p$, where $p$ is prime. This gives an action of the finite group $G = \mathbb{Z}_p$ on $Y$ generated by $\tau$. Let $\mathfrak{s}$ be a spin$^c$-structure preserved by $\tau$. In \cite{bh}, the authors constructed the equivariant Seiberg--Witten--Floer cohomology groups $HSW^*_G(Y , \mathfrak{s})$. Except where stated otherwise, we take Floer cohomology with respect to the coefficient field $\mathbb{F} = \mathbb{Z}_p$. Then $HSW^*_G(Y , \mathfrak{s})$ is a module over the ring $H^*_{S^1 \times G} = H^*_{S^1 \times G}( pt ; \mathbb{F})$. If $p=2$, then $H^*_{S^1 \times G} \cong \mathbb{F}[U,Q]$, where $deg(U) = 2$, $deg(Q) = 1$. If $p$ is odd, then $H^*_{S^1 \times G} \cong \mathbb{F}[U,R,S]/(R^2)$, where $deg(U) = 2$, $deg(R)=1$, $deg(S)=2$. As in the non-equivariant case, the grading on $HSW^*_G(Y , \mathfrak{s})$ can in general take rational values. However, if $Y$ is an integral homology sphere then the grading is integer-valued.

The localisation theorem in equivariant cohomology implies that the localisation $U^{-1} HSW^*_G(Y , \mathfrak{s})$ is a free $U^{-1}H^*_{S^1 \times G}$-module of rank $1$. Letting $\mu$ denote a generator of $U^{-1} HSW^*_G(Y , \mathfrak{s})$, we have an isomorphism of the form
\[
\iota \colon U^{-1}HSW^*_G(Y , \mathfrak{s}) \to \mathbb{F}[Q , U , U^{-1}]\mu
\]
for $p=2$ and
\[
\iota \colon U^{-1}HSW^*_G(Y , \mathfrak{s}) \to \frac{\mathbb{F}[R , S , U , U^{-1}]}{(R^2)} \mu
\]
for $p$ odd. Following \cite[\textsection 3]{bh} we define a sequence of equivariant $\delta$-invariants as follows. The cases $p=2$ and $p \neq 2$ need to be treated separately. First suppose $p=2$. For each $j \ge 0$, we define $\delta^{(p)}_{j}(Y , \mathfrak{s} , \tau)$ to be $i/2 - j/2$, where $i$ is the least degree for which there exists an element $x \in HSW^i_G(Y , \mathfrak{s})$ and a $k \in \mathbb{Z}$ such that
\[
\iota x = Q^{j} U^k \mu \; ({\rm mod} \; Q^{j+1}).
\]
If $p \neq 2$, then for each $j \ge 0$, we define $\delta_{j}^{(p)}(Y , \mathfrak{s} , \tau)$ to be $i/2 - j$, where $i$ is the least degree for which there exists an element $x \in HSW^i_G(Y , \mathfrak{s})$ and a $k \in \mathbb{Z}$ such that
\[
\iota x = S^{j} U^k \mu \; ({\rm mod} \; S^{j+1}, RS^j).
\]
When the diffeomorphism $\tau$ is understood we will omit it from the notation and simply write the delta invariants as $\delta_j^{(p)}(Y , \mathfrak{s})$.

Various properties of the $\delta$-invariants are shown in \cite{bh}. In particular, we have:
\begin{itemize}
\item[(1)]{$\delta^{(p)}_{0}(Y,\mathfrak{s}) \ge \delta(Y,\mathfrak{s})$, where $\delta(Y,\mathfrak{s}) = d(Y,\mathfrak{s})/2$ and $d(Y,\mathfrak{s})$ is the Ozsv\'ath--Szab\'o $d$-invariant.}
\item[(2)]{$\delta_{j+1}^{(p)}(Y,\mathfrak{s}) \le \delta_{j}^{(p)}(Y , \mathfrak{s})$ for all $j \ge 0$.}
\item[(3)]{The sequence $\{ \delta_{j}^{(p)}(Y,\mathfrak{s}) \}_{j \ge 0}$ is eventually constant.}
\end{itemize}

Using property (3), we may define two additional invariants of $(Y , \mathfrak{s} , \tau)$ as follows. We define $\delta_\infty^{(p)}(Y , \mathfrak{s},\tau) = \lim_{j \to \infty} \delta_j^{(p)}(Y , \mathfrak{s},\tau)$ and we define $j^{(p)}(Y , \mathfrak{s},\tau)$ to be the smallest $j$ such that $\delta_j^{(p)}(Y , \mathfrak{s},\tau) = \delta_\infty^{(p)}(Y , \mathfrak{s},\tau)$. If $\tau$ is understood we will simply write $\delta_\infty^{(p)}(Y , \mathfrak{s})$ and $j^{(p)}(Y,\mathfrak{s})$.

\subsection{Knot concordance invariants}\label{sec:kci}

Given a knot $K \subset S^3$ and a prime number $p$, we let $Y = \Sigma_p(K)$ denote the degree $p$ cyclic cover of $S^3$ branched over $K$. Then $Y$ is a rational homology $3$-sphere \cite[Corollary 3.2]{liv} and it comes equipped with a natural $\mathbb{Z}_p$-action. Let $\pi \colon Y \to S^3$ denote the covering map. From \cite[Corollary 2.2]{jab}, any spin$^c$-structure on $Y \setminus \pi^{-1}(K)$ uniquely extends to $Y$. Then since $H^2( S^3 \setminus K ; \mathbb{Z}) = 0$, there is a unique spin$^c$-structure on $S^3 \setminus K$. The pullback of this spin$^c$-structure under $\pi$ extends uniquely to a spin$^c$-structure on $Y$. Following \cite{jab}, we denote this spin$^c$-structure by $\mathfrak{s}_0 = \mathfrak{s}_0(K , p)$. Uniqueness of the extension implies that $\mathfrak{s}_0$ is $\mathbb{Z}_p$-invariant. Thus for any prime $p$ and any $j \ge 0$, we obtain a knot invariant $\delta^{(p)}_j(K)$ by setting
\[
\delta_j^{(p)}(K) = 4 \delta_j^{(p)}( \Sigma_p( K ) , \mathfrak{s}_0 ).
\]

The invariants $\delta^{(p)}_j(K)$ are knot concordance invariants and they satisfy a number of properties \cite{bh}, \cite{bar}. In particular, for $p=2$ we have:
\begin{itemize}
\item[(1)]{$\delta^{(2)}_0(K) \ge \delta^{(2)}(K)$, where $\delta^{(2)}(K)$ is the Manolescu--Owens invariant \cite{mo}.}
\item[(2)]{$\delta^{(2)}_{j+1}(K) \le \delta^{(2)}_j(K)$ for all $j \ge 0$.}
\item[(3)]{$\delta^{(2)}_j(K) \ge -\sigma(K)/2$ for all $j \ge 0$ and $\delta^{(2)}_j(K) = -\sigma(K)/2$ for $j \ge g_4(K)-\sigma(K)/2$.}
\end{itemize}

Here $g_4(K)$ is the slice genus of $K$ and $\sigma(K)$ is the signature. Similarly for $p \neq 2$, we have:
\begin{itemize}
\item[(1)]{$\delta^{(p)}_0(K) \ge \delta^{(p)}(K)$, where $\delta^{(p)}(K)$ is the Jabuka invariant \cite{jab}.}
\item[(2)]{$\delta^{(p)}_{j+1}(K) \le \delta^{(p)}_j(K)$ for all $j \ge 0$.}
\item[(3)]{$\delta^{(p)}_j(K) \ge -\sigma^{(p)}(K)/2$ for all $j \ge 0$ and $\delta^{(p)}_j(K) = -\sigma^{(p)}(K)/2$ for $2j \ge (p-1)g_4(K)-\sigma^{(p)}(K)/2$.}
\end{itemize}

Here $\sigma^{(p)}(K)$ is defined as
\[
\sigma^{(p)}(K) = \sum_{j=1}^{p-1} \sigma_{K}( e^{2\pi i j/p}),
\]
where $\sigma_K(\omega)$ is the Levine--Tristram signature of $K$. Define $j^{(p)}(K)$ to be the smallest $j$ such that $\delta^{(p)}_j(K)$ attains its minimum. Thus
\[
j^{(p)}(K) = j^{(p)}( \Sigma_p(K) , \mathfrak{s}_0).
\]
From property (3), we see that the minimum value of $\delta^{(2)}_j(K)$ is precisely $-\sigma(K)/2$ and that
\[
j^{(2)}(K) \le g_4(K) - \sigma(K)/2.
\]
This can be re-arranged into a lower bound for the slice genus:
\[
g_4(K) \ge j^{(2)}(K) + \frac{\sigma(K)}{2}.
\]
Similarly for $p \neq 2$, the minimum value of $\delta^{(p)}_j(K)$ is $-\sigma^{(p)}(K)/2$ and that
\[
2 j^{(p)}(K) \le (p-1)g_4(K) - \frac{\sigma^{(p)}(K)}{2}.
\]
This can be re-arranged to
\[
g_4(K) \ge \frac{2 j^{(p)}(K)}{(p-1)} + \frac{\sigma^{(p)}(K)}{2(p-1)}.
\]
Replacing $K$ by its mirror $-K$, we  obtain the following slice genus bounds:
\[
g_4(K) \ge j^{(2)}(-K) - \frac{\sigma(K)}{2}
\]
for $p=2$, and 
\[
g_4(K) \ge \frac{2 j^{(p)}(-K)}{(p-1)} - \frac{\sigma^{(p)}(K)}{2(p-1)}
\]
for $p \neq 2$. Following \cite{bar}, we define knot concordance invariants $\theta^{(p)}(K)$ for all primes $p$ by setting
\[
\theta^{(2)}(K) = \max\left\{ 0 , j^{(2)}(-K) - \frac{\sigma(K)}{2} \right\}
\]
for $p=2$, and
\[
\theta^{(p)}(K) = \max\left\{ 0 , \frac{2 j^{(p)}(-K)}{(p-1)} - \frac{\sigma^{(p)}(K)}{2(p-1)} \right\}
\]
for $p \neq 2$. Then we have the slice genus bounds $g_4(K) \ge \theta^{(p)}(K)$ for all $p$.

\section{Further properties of the delta invariants}\label{sec:fdelta}

In this section we will use the spectral sequence of \cite{bh} relating equivariant and non-equivariant Seiberg--Witten--Floer cohomologies in order to deduce more precise information on the $\delta$-invariants. In particular, we will be able to determine the value of $j^{(p)}(Y)$ under certain assumptions on the Floer homology of $Y$.

\subsection{Spectral sequence}\label{sec:ss}

Let $Y$ be a rational homology $3$-sphere and $\tau \colon Y \to Y$ an orientation preserving diffeomorphism of prime order $p$. Let $\mathfrak{s}$ be a spin$^c$-structure preserved by $\tau$. Let the coefficient group be $\mathbb{F} = \mathbb{Z}_p$ and let $G = \langle \tau \rangle = \mathbb{Z}_p$.

Recall that $H^*_G$ is isomorphic to $\mathbb{F}[Q]$, where $deg(Q) = 1$ if $p=2$ and is isomorphic to $\mathbb{F}[R,S]/(R^2)$ where $deg(R) = 1$, $deg(S) = 2$ if  $p$ is odd. Thus in either case we have $H^{ev}_G \cong \mathbb{F}[S]$, where in the $p=2$ case we set $S = Q^2$.

Let $\{ E_r^{p,q} , d_r \}$ denote the spectral sequence for the $G$-equivariant Seiberg--Witten--Floer cohomology $HSW^*_{G}(Y , \mathfrak{s})$ \cite[Theorem 3.2]{bh}. In detail, this means there is a filtration $\{ \mathcal{F}_j^* \}_{j \ge 0}$ on $HSW^*_G(Y , \mathfrak{s})$ such that $E_\infty$ is isomorphic to the associated graded group 
\[
E_\infty \cong Gr( HSW^*_{G}(Y , \mathfrak{s}) ) = \bigoplus_{j \ge 0} \mathcal{F}^*_j/\mathcal{F}^*_{j+1}
\]
as $H^*_{S^1 \times G}$-modules. In terms of bigrading this means that $E^{p,q}_\infty \cong \mathcal{F}_p^{p+q}/\mathcal{F}_{p+1}^{p+q}$. Furthermore, we have that
\[
E_2^{p,q} = H^p( \mathbb{Z}_p ; HSW^q(Y , \mathfrak{s})).
\]

\subsection{Behaviour of the delta invariants}\label{sec:beh}

Let $(Y , \mathfrak{s} , \tau)$ be as in Section \ref{sec:ss}. We will examine the spectral sequence $\{ E_r^{p,q} \}$ to deduce some properties of the delta invariants $\{ \delta_j^{(p)}(Y,\mathfrak{s}) \}$.

To simplify notation, we will set $H^i = HSW^i(Y , \mathfrak{s})$ and $\widehat{H}^i = HSW^i_G(Y , \mathfrak{s})$. Let $d = d(Y , \mathfrak{s})$ and $\delta = d/2$. We will make the assumption that $H^i = 0$ unless $i = d \; ({\rm mod} \; 2)$. This assumption is satisfied if $Y$ is a Seifert homology sphere oriented so that $-Y$ is the boundary of a negative definite plumbing.

\begin{lemma}\label{lem:even}
If $p=2$ and if $HSW^*(Y , \mathfrak{s})$ is concentrated in degrees equal to $d(Y,\mathfrak{s})$ mod $2$, then for all $j \ge 0$ we have $\delta_{2j+1}^{(2)}(Y , \mathfrak{s}) = \delta_{2j}^{(2)}(Y ,\mathfrak{s})$.
\end{lemma}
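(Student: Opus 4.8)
The plan is to translate the statement into a claim about the spectral sequence of \S\ref{sec:ss} and then read that claim off from the behaviour of multiplication by $Q$. Unwinding the definition of the $p=2$ delta invariants, $\delta^{(2)}_j(Y,\mathfrak s)=\tfrac12(m_j-j)$, where $m_j$ is the least degree $i$ admitting $x\in\widehat H^i$ and $k\in\mathbb Z$ with $\iota x\equiv Q^jU^k\tau\pmod{Q^{j+1}}$. So the identity $\delta^{(2)}_{2j+1}=\delta^{(2)}_{2j}$ is equivalent to $m_{2j+1}=m_{2j}+1$. One inequality is free and is just property (2) of the delta invariants: if $x$ realises $m_{2j}$ then $Qx$ satisfies the level $2j+1$ condition in degree $m_{2j}+1$, so $m_{2j+1}\le m_{2j}+1$. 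Moreover, since $\deg(Q^jU^k\tau)=j+2k+\deg\tau$, we have $m_j\equiv j+\deg\tau\pmod2$, so $m_{2j}$ and $m_{2j+1}$ have opposite parities; hence it suffices to rule out $m_{2j+1}\le m_{2j}-1$, i.e. to prove $m_{2j}\le m_{2j+1}-1$.

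Next I would feed in the hypothesis. Since $H^q=0$ for $q\not\equiv d\pmod2$, the page $E_2^{p,q}=H^p(\mathbb Z_2;H^q)$ is supported on $q\equiv d\pmod2$; hence $d_r=0$ for every \emph{even} $r$, and the same support condition is inherited by all later pages and by $E_\infty$, so in filtration terms $\mathcal F^n_p=\mathcal F^n_{p+1}$ whenever $p\not\equiv n-d\pmod2$. The key input I want is that \emph{multiplication by $Q$ induces a surjection $E_\infty^{p,q}\twoheadrightarrow E_\infty^{p+1,q}$ for every $p\ge0$.} On $E_2$ this is the map $\cup Q\colon H^p(\mathbb Z_2;H^q)\to H^{p+1}(\mathbb Z_2;H^q)$; for $\mathbb Z_2$ with $\mathbb F_2$ coefficients and any module $M$ this is surjective for all $p\ge0$ and an isomorphism for $p\ge1$ (Tate periodicity for $p\ge1$; for $p=0$ it is the quotient $M^{\mathbb Z_2}\twoheadrightarrow M^{\mathbb Z_2}/\mathrm{im}(1+\tau_*)$). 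Because $Q$ is a permanent cycle commuting with all $d_r$ and only the odd differentials survive, this propagates through the spectral sequence: one runs an induction on $r$ carrying along the auxiliary fact that $\cup Q$ is also \emph{injective} in filtration degrees $\ge\iota_r$, observes that surjectivity together with $Q$-linearity of $d_r$ forces $\iota_{r+1}\le\max(\iota_r,r+1)$ (so the injectivity defect never overtakes $r$), and concludes that surjectivity is inherited by $E_{r+1}$, hence by $E_\infty$.

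Granting this, the finish goes as follows. Put $i=m_{2j+1}$ and choose $x\in\widehat H^i$ realising the level $2j+1$ condition; using that $\iota$ is a filtered map strictly compatible with the spectral sequences of $\widehat H^*$ and of $U^{-1}\widehat H^*$, one may take $x\in\mathcal F^i_{2j+1}$, so that its class $\bar x\in E_\infty^{2j+1,\,i-2j-1}$ is nonzero and maps to a nonzero element of the localised $E_\infty$-page. By the surjectivity just established, $\bar x=Q\beta$ for some $\beta\in E_\infty^{2j,\,i-2j-1}$; since $Q$ acts invertibly on the (degenerate) spectral sequence of $U^{-1}\widehat H^*$, $\beta$ too localises nontrivially. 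Lifting $\beta$ to $w\in\mathcal F^{i-1}_{2j}$, the image of $\iota w$ in the degree-$(i-1)$ part of $\mathcal G_{2j}/\mathcal G_{2j+1}$ of the $Q$-adic filtration on $U^{-1}\widehat H^*$ is that nonzero element, so $\iota w\equiv Q^{2j}U^k\tau\pmod{Q^{2j+1}}$ for the forced $k$, whence $m_{2j}\le i-1=m_{2j+1}-1$. Together with the first paragraph, $m_{2j+1}=m_{2j}+1$ and $\delta^{(2)}_{2j+1}=\delta^{(2)}_{2j}$.

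The main obstacle is the propagation of $Q$-surjectivity from $E_2$ to $E_\infty$: surjectivity by itself is not preserved under passing to the homology of a differential, so the induction must simultaneously track the injectivity range, and it is precisely the vanishing of the even differentials — forced by the concentration hypothesis — that keeps that range growing only linearly in $r$, so the induction closes. A secondary, more bookkeeping point is the strict compatibility of the localisation map $\iota$ with the two spectral sequences, used to pass between the $\iota$-based definition of $\delta^{(2)}_j$ and the description in terms of which $E_\infty$-classes survive to the localisation.
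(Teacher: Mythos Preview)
Your overall strategy matches the paper's: both hinge on the fact that multiplication by $Q$ is surjective on $E_\infty$, hence $\mathcal F_{j+1}=Q\mathcal F_j$ for all $j$. But the paper's execution is considerably simpler, and your version has a real gap at the step you flag as ``bookkeeping''.

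The paper argues as follows. Take $x$ of minimal degree $i$ realising the level-$(2j{+}1)$ condition. Since $\deg x \equiv d(Y,\mathfrak s)+1 \pmod 2$, the image of $x$ in $E_\infty^{0,i}\subseteq HSW^i(Y,\mathfrak s)$ vanishes by the concentration hypothesis; hence $x\in\mathcal F_1$. By surjectivity of $Q\colon\mathcal F_0\to\mathcal F_1$, write $x=Qy$ with $\deg y=i-1$. Then $\iota x=Q\,\iota y$, and since $Q$ is visibly injective on the localisation $\mathbb F[Q,U,U^{-1}]\mu$, the congruence $\iota x\equiv Q^{2j+1}U^k\mu\pmod{Q^{2j+2}}$ gives $\iota y\equiv Q^{2j}U^k\mu\pmod{Q^{2j+1}}$. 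Thus $m_{2j}\le i-1=m_{2j+1}-1$, and you are done: one parity observation, one division by $Q$.

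Your route instead tries to push $x$ all the way into $\mathcal F_{2j+1}$ and then argue on $E_\infty$. The assertion ``one may take $x\in\mathcal F_{2j+1}^i$'' is where the argument breaks: knowing $\iota x\in Q^{2j+1}U^{-1}\widehat H$ does not force $x$ into $\mathcal F_{2j+1}$, because $x$ may carry a $U$-torsion component outside $\mathcal F_{2j+1}$ that $\iota$ annihilates. What you call strict compatibility would amount to $\iota^{-1}(Q^pU^{-1}\widehat H)\subseteq\mathcal F_p+\ker\iota$, and that is a substantive claim, not bookkeeping. The parity trick only places $x$ in $\mathcal F_1$---but as the paper shows, $\mathcal F_1$ is already enough, so the whole issue is avoidable.

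Two smaller remarks. First, your propagation of $Q$-surjectivity through the spectral sequence (tracking an injectivity threshold $\iota_r$) is more elaborate than needed; the paper simply invokes the $p=2$ analogue of \cite[Lemma~5.7]{bh}. Second, in the paper the concentration hypothesis is used \emph{only} at the final parity step, not to make the surjectivity propagate; so your emphasis on ``even differentials vanish'' as the heart of the matter is misplaced.
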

\begin{proof}
Adapting the proof of \cite[Lemma 5.7]{bh} to the $p=2$ case, one can show that $Q \colon E_\infty^{p,q} \to E_\infty^{p+1,q}$ is surjective for all $p,q$. This implies that $Q \colon \mathcal{F}_j \to \mathcal{F}_{j+1}$ is surjective. Now let $x \in HSW_{\mathbb{Z}_2}^i( Y , \mathfrak{s})$ satisfy $\iota x = Q^{2j+1} U^k \mu \; ({\rm mod} \; Q^{2j+2})$ for some $k$ and assume that $x$ has the minimal possible degree, so $$\delta_{2j+1}^{(2)}(Y , \mathfrak{s}) = deg(x)/2 - (2j+1)/2 = i/2 - j - 1/2.$$ Note that since $deg(Q) = 1$, $deg(U) = 2$, we have that $deg(x) = 1 + d(Y,\mathfrak{s}) \; ({\rm mod} \; 2)$. This means that the image of $x$ in $HSW_{\mathbb{Z}_2}^i( Y , \mathfrak{s})/\mathcal{F}_1 = E_\infty^{0,q}$ is zero, because $E_\infty^{0,q} \subseteq E_2^{0,q} = H^0( \mathbb{Z}_2 ; HSW^q(Y , \mathfrak{s}))$ is concentrated in degrees equal to $d(Y , \mathfrak{s})$ mod $2$. Hence $x \in \mathcal{F}_1$. This implies that $x = Qy$ for some $y \in HSW^{i-1}_{\mathbb{Z}_2}(Y , \mathfrak{s})$. Then since $x =Qy$, we have
\[
\iota x = Q \iota y = Q^{2j+1} U^k \mu \; ({\rm mod} \; Q^{2j+2}).
\]
Moreover, $Q$ is injective on $U^{-1} HSW_{\mathbb{Z}_2}^*(Y , \mathfrak{s}) \cong \mathbb{F}[Q , U , U^{-1}]$, so we deduce that
\[
\iota y = Q^{2j} U^k \mu \; ({\rm mod} \; Q^{2j+1}).
\]
Therefore, from the definition of $\delta_{2j}^{(2)}(Y , \mathfrak{s})$, it follows that 
\[
\delta_{2j}^{(2)}(Y , \mathfrak{s}) \le deg(y)/2 - j = deg(x)/2 - 1/2 - j = \delta_{2j+1}^{(2)}(Y , \mathfrak{s}).
\]
But we always have $\delta_{2j+1}^{(2)}(Y , \mathfrak{s}) \le \delta_{2j}^{(2)}(Y , \mathfrak{s})$, so we must have an equality $\delta_{2j}^{(2)}(Y , \mathfrak{s}) = \delta_{2j+1}^{(2)}(Y , \mathfrak{s})$.
\end{proof}

\begin{remark}\label{rem:j2}
By Lemma \ref{lem:even}, if $p=2$ and $HSW^*(Y , \mathfrak{s})$ is concentrated in degrees equal to $d(Y,\mathfrak{s})$ mod $2$, then $j^{(2)}(Y , \mathfrak{s})$ is even.
\end{remark}

In what follows, we find it more convenient to consider not the whole equivariant Seiberg--Witten--Floer cohomology but only the part concentrated in degrees equal to $d(Y , \mathfrak{s})$ mod $2$. For this reason we consider even counterparts of the relevant ingredients, such as the spectral sequence and filtration. One advantage is that it allows us to treat the $p=2$ and $p \neq 2$ cases simultaneously.

Let $H^{ev}_{S^1 \times G}$ denote the subring of $H^*_{S^1 \times G}$ given by elements of even degree. Then $H^{ev}_{S^1 \times G} \cong \mathbb{F}[U,S]$. Let $\widehat{H}^{ev}$ denote the $H^{ev}_{S^1 \times G}$-submodule of $\widehat{H}^*$ given by elements of degree equal to $d$ mod $2$. The filtration $\{ \mathcal{F}_j^* \}$ defines a filtration $\{ \mathcal{F}_j^{ev} \}$ on $\widehat{H}^{ev}$, where $\mathcal{F}_j^{ev} = \mathcal{F}_{2j} \cap \widehat{H}^{ev}$. The associated graded $H^{ev}_{S^1 \times G}$-module of this filtration is isomorphic to 
\[
E_\infty^{ev} = \bigoplus_{p,q} E_\infty^{2p,q}.
\]

By \cite[Lemma  5.7]{bh}, the map $S \colon E_\infty^{2p,q} \to E_\infty^{2p+2 , q}$ is surjective for all $p,q$. It follows that $S \colon \mathcal{F}^{ev}_j \to \mathcal{F}^{ev}_{j+1}$ is surjective for each $j \ge 0$. In particular, since $\mathcal{F}^{ev}_0 = \widehat{H}^{ev}$, we see that $\mathcal{F}^{ev}_j = S^j \widehat{H}^{ev}$.

Recall the localisation isomorphism
\[
\iota \colon U^{-1} \widehat{H}^* \cong H^*_G[U , U^{-1}] \mu
\]
for some $\mu$. Restricting to $\widehat{H}^{ev}$, we get a localisation isomorphism
\[
\iota \colon U^{-1} \widehat{H}^{ev} \cong H^{ev}_G[U,U^{-1}]\mu \cong \mathbb{F}[S,U,U^{-1}]\mu.
\]
Let $\delta_j$ denote $\delta_j^{(p)}(Y,\mathfrak{s})$ if $p$ is odd and $\delta_{2j}^{(2)}(Y , \mathfrak{s})$ if $p=2$. Since $S = Q^2$ in the $p=2$ case, it follows that regardless of whether $p$ is even or odd, we have $\delta_j = a/2 - j$, where $a$ is the least degree such that there exists an element $x \in \widehat{H}^{a}$ with 
\[
\iota x = S^j U^k \mu \; ({\rm mod} \; S^{j+1})
\]
for some $k \in \mathbb{Z}$.

Observe that $E_2^{0,q} = H^0( \mathbb{Z}_p ; H^q )$ is a submodule of $H^q$. Then since $E_{r+1}^{0,q}$ is a submodule of $E_{r}^{0,q}$ for each $r \ge 2$, we see that $E_\infty^{0,*} = \mathcal{F}^{ev}_0/\mathcal{F}^{ev}_1$ can be identified with an $\mathbb{F}[U]$-submodule of $H^*$. We will denote this submodule by $J^* \subset H^*$. Now since $H^* \cong \mathbb{F}[U] \lambda \oplus H^*_{red}$, where $\lambda$ has degree $d$ and $H^*_{red} = \{ x \in H^* \; | \; U^m x = 0 \text{ for some } m \ge 0\}$. It follows that $J^* \cong \mathbb{F}[U]\theta \oplus J^*_{red}$, where $\theta$ has degree at least $d$ and $J^*_{red} \subseteq H^*_{red}$. Since $U^m \theta \neq 0$ for all $m \ge 0$, we see that the image of $\theta$ under the localisation map $\iota$ is non-zero. More precisely,
\[
\iota \theta = U^{\alpha} \mu \; ({\rm mod} \; S )
\]
for some $\alpha \in \mathbb{Z}$. On the other hand, any $x \in J^*$ has $U^m x = 0 \; ({\rm mod} \; \mathcal{F}_1^{ev})$ for some $m \ge 0$. This means that $U^m x = Sy$ for some $y$ and thus $\iota x$ is a multiple of $S$. It follows that $deg(\theta) = \delta_0$.

By \cite[Proposition 3.14]{bh}, we then have that $\delta_j = a/2 - j$, where $a$ is the least degree such that there exists an element $x \in \widehat{H}^a$ such that
\[
U^m x = S^j U^k \theta \; ({\rm mod} \; \mathcal{F}_{j+1}^{ev})
\]
for some $m, k \ge 0$.

Consider again the sequence $\delta_0 \ge \delta_1 \ge \delta_2 \ge \cdots$. This sequence is eventually constant, hence there exists a finite set of indices $0 < j_1 < j_2 < \cdots < j_r$ such that for each $j > 0$, $\delta_j < \delta_{j-1}$ if and only if $j = j_i$ for some $i$. We let $n_i = \delta_{j_{i-1}} - \delta_{j_i} > 0$ for $1 < i \le r$ and $n_1 = \delta_{j_1} - \delta_0$ for $i=1$. Thus $\delta_{j_i} = \delta_0 - (n_1 + \cdots + n_i )$.

\begin{lemma}\label{lem:indep}
There exists $x_1, \dots , x_r \in J^*_{red}$ such that:
\begin{itemize}
\item[(1)]{The set $\{ U^a x_i \; | \; 1 \le i \le r, 0 \le a \le n_i -1 \}$ is linearly independent over $\mathbb{F}$. In particular, $U^a x_i \neq 0$ for $a < n_i$.}
\item[(2)]{We have $\delta_{j_i} = deg(x_i)/2 - j_i$.}
\item[(3)]{Each $x_i$ has a lift to an element $y_i \in \widehat{H}^i$ satisfying
\[
U^{m_i} y_i = S^{j_i} U^{k_i} \theta \; ({\rm mod} \; \mathcal{F}_{j_i+1}^{ev})
\]
for some $m_i, k_i \ge 0$.}
\end{itemize}
\end{lemma}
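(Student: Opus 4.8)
The plan is to read the elements $x_i$ directly off the description of $\delta_{j_i}$ provided by \cite[Proposition 3.14]{bh}. For each $i$, choose $y_i\in\widehat H^{a_i}$ of minimal degree $a_i$ for which there exist $m_i,k_i\ge 0$ with $U^{m_i}y_i = S^{j_i}U^{k_i}\theta\ ({\rm mod}\ \mathcal F_{j_i+1}^{ev})$; then $a_i=2(\delta_{j_i}+j_i)$ since $\delta_{j_i}=a_i/2-j_i$, and (multiplying through by $U$ if necessary) we may assume $m_i\ge 1$. Let $x_i$ be the image of $y_i$ in $E_\infty^{0,*}=\mathcal F_0^{ev}/\mathcal F_1^{ev}=J^*$. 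With these choices property (3) holds by construction and property (2) holds because $\deg x_i=\deg y_i=a_i$.

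I would next dispose of the two membership claims. Since $j_i\ge 1$, both $S^{j_i}U^{k_i}\theta$ and $\mathcal F_{j_i+1}^{ev}$ lie in $\mathcal F_1^{ev}$, so $U^{m_i}x_i=[U^{m_i}y_i]=0$ in $J^*$, and hence $x_i\in J^*_{red}$. To see $x_i\neq 0$, suppose $y_i\in\mathcal F_1^{ev}=S\widehat H^{ev}$, say $y_i=Sz$ with $\deg z=a_i-2$. Then $S\bigl(U^{m_i}z-S^{j_i-1}U^{k_i}\theta\bigr)\in\mathcal F_{j_i+1}^{ev}=S^{j_i+1}\widehat H^{ev}$; since $S$ is injective on $U^{-1}\widehat H^{ev}$, its kernel on $\widehat H^{ev}$ consists of $U$-torsion, so for $N\gg 0$ we obtain $U^{N+m_i}z = S^{j_i-1}U^{N+k_i}\theta\ ({\rm mod}\ \mathcal F_{j_i}^{ev})$. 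Feeding the element $z$ into the characterisation of $\delta_{j_i-1}$ gives $\delta_{j_i-1}\le (a_i-2)/2-(j_i-1)=\delta_{j_i}$, contradicting $\delta_{j_i}<\delta_{j_i-1}$. Thus $x_1,\dots,x_r$ are nonzero elements of $J^*_{red}$.

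The real content is the linear independence in (1). Since the relevant groups are graded, a putative nontrivial relation may be taken homogeneous of some degree $g$, say $\sum_{i\in I}c_iU^{a(i)}x_i=0$ with $I\neq\emptyset$, all $c_i\neq 0$, $a_i+2a(i)=g$ and $0\le a(i)\le n_i-1$. Lifting to $\widehat H^{ev}$, and using $\mathcal F_1^{ev}=S\widehat H^{ev}$ (here the surjectivity of $S$ from \cite[Lemma 5.7]{bh} is used), this reads $\sum_{i\in I}c_iU^{a(i)}y_i = S\zeta$ for some $\zeta\in\widehat H^{g-2}$. Now apply the localisation map $\iota$. From $U^{m_i}y_i\equiv S^{j_i}U^{k_i}\theta$ and $\iota\theta=U^\alpha\mu\ ({\rm mod}\ S)$, together with the degree count, one reads off that $\iota(y_i)$ has $S$-adic leading term $S^{j_i}U^{e_i}\mu$ with $e_i=\delta_{j_i}-\tfrac12\deg\mu$, so $\iota(U^{a(i)}y_i)=S^{j_i}U^{e_i+a(i)}\mu+(\text{terms of higher }S\text{-order})$. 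Because the $j_i$ are pairwise distinct, the lowest-order term in $S$ of $\iota\bigl(\sum_{i\in I}c_iU^{a(i)}y_i\bigr)$ is $c_{i_*}S^{j_{i_*}}U^{e_{i_*}+a(i_*)}\mu$, where $i_*=\min I$; comparing with $\iota(S\zeta)=S\,\iota(\zeta)$ forces $\iota(\zeta)$ to have $S$-adic leading term $c_{i_*}S^{j_{i_*}-1}U^{e_{i_*}+a(i_*)}\mu$. Hence $c_{i_*}^{-1}\zeta\in\widehat H^{g-2}$ witnesses the bound
\[
\delta_{j_{i_*}-1}\ \le\ \frac{g-2}{2}-(j_{i_*}-1)\ =\ \delta_{j_{i_*}}+a(i_*).
\]
On the other hand $\delta_{j_{i_*}-1}=\delta_{j_{i_*}}+n_{i_*}$ (with the convention $\delta_{j_0}=\delta_0$), so $n_{i_*}\le a(i_*)\le n_{i_*}-1$, a contradiction. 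Therefore no such relation exists, and (1) follows.

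I expect the main obstacle to be the bookkeeping in the last paragraph: one has to keep careful track of which form of the description of $\delta_j$ is in force (the `$\iota x=S^jU^k\mu\ ({\rm mod}\ S^{j+1})$' form versus the `$U^mx=S^jU^k\theta\ ({\rm mod}\ \mathcal F_{j+1}^{ev})$' form of \cite[Proposition 3.14]{bh}), remember that $\mathcal F_1^{ev}$ is only $S\widehat H^{ev}$ rather than something visible directly through $\iota$, so that `dividing by $S$' is legitimate only after inverting $U$ and discarding $U$-torsion, and check that the exponents $m_i,k_i,N$ stay non-negative so that the descriptions genuinely apply. Once this is set up, Steps 1--3 are essentially formal.
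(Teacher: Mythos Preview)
Your proof is correct and follows essentially the same route as the paper's: choose $y_i$ realising $\delta_{j_i}$, project to $J^*$, and for linear independence lift a hypothetical relation to $\widehat H^{ev}$, isolate the summand with smallest $j_i$, and contradict the minimality defining $\delta_{j_{i_*}-1}$. The only cosmetic differences are that you verify $x_i\in J^*_{red}$ directly via $U^{m_i}x_i=0$ (the paper argues by contradiction from the splitting $J^*=\mathbb{F}[U]\theta\oplus J^*_{red}$) and you run the final contradiction entirely through the $S$-adic valuation of $\iota$ in $\mathbb{F}[S,U,U^{-1}]\mu$, whereas the paper first reduces modulo $\mathcal{F}_{j_u+1}^{ev}$ before localising.
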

\begin{proof}
By the definition of $\delta_{j_i}$, there exists $y_i \in \widehat{H}^*$ such that $\delta_{j_i} = deg(y_i)/2 - j_i$ and $U^{m_i} y_i = S^{j_i} U^{k_i} \theta \; ({\rm mod} \; \mathcal{F}_{j_i+1}^{ev})$ for some $m_i,k_i$. Let $x_i \in H^*$ be the image of $y_i$ under the natural map $\widehat{H}^{ev} \to \widehat{H}^{ev}/\mathcal{F}_1^{ev} \cong J^*$. We claim that $x_i \in J^*_{red}$. If not, then $x_i = c U^k \theta + w$ for some $c \in \mathbb{F}^*$, $k \ge 0$ and $w \in J^*_{red}$. Then $U^m x_i = c U^{k+m} \theta$ for some $m, k \ge 0$ and hence $U^m y_i = c U^{k+m} \theta \; ({\rm mod} \; \mathcal{F}^{ev}_1)$. This contradicts $U^{m_i} y_i = S^{j_i} U^{k_i} \theta \; ({\rm mod} \; \mathcal{F}_{j_i+1}^{ev})$ as $S^{j_i} U^{k_i} \theta = 0 \; ({\rm mod} \; \mathcal{F}^{ev}_1)$ (since $j_i > 0$). So $x_i \in J^*_{red}$. It is now evident that (2) and (3) are satisfied. We claim that (1) also holds, that is, the set $\{ U^a x_i \; | \; 1 \le i \le r, 0 \le a \le n_i -1 \}$ is linearly independent over $\mathbb{F}$.

Suppose we have a non-trivial linear relation amongst elements of $\mathcal{B} = \{ U^a x_i \; | \; 1 \le i \le r, 0 \le a \le n_i -1 \}$ of degree $s$. So
\[
\sum_i c_i U^{\beta_i} x_i = 0
\]
for some $c_i \in \mathbb{F}$, not all zero and $\beta_i = (s - deg(x_i) )/2$. Furthermore, the sum is restricted to those $i$ such that $0 \le \beta_i \le n_i - 1$ (so that $U^{\beta_i} x_i$ belongs to $\mathcal{B}$). Lifting this to $\widehat{H}^{ev}$, we have $\sum_i c_i U^{\beta_i} y_i \in \mathcal{F}^{ev}_1$ and hence
\begin{equation}\label{equ:w}
\sum_i c_i U^{\beta_i} y_i = Sw
\end{equation}
for some $w \in \widehat{H}^{s-2}$. Let $u$ be the smallest value of $i$ such that $c_u \neq 0$. Multiplying by a sufficiently large power of $U$ and taking the result modulo $\mathcal{F}^{ev}_{j_u+1}$, we see that
\[
S U^m w = c_i S^{j_u} U^{b} \theta \; ({\rm mod} \; \mathcal{F}^{ev}_{j_u+1})
\]
for some $m,b \ge 0$. Hence $S \iota (U^m w) = S \iota (c_i S^{j_u-1} U^b \theta) \; ({\rm mod} \; S^{j_u+1})$, where $\iota$ is the localisation map. Since $S$ is injective on $\mathbb{F}[S,U,U^{-1}]\mu$, we also have $\iota w = c_i S^{j_u-1} U^{b-m} \iota \theta \; ({\rm mod} \; S^{j_u})$. Now we recall that $\iota \theta = U^\alpha \mu \; ({\rm mod} \; S)$ for some $\alpha$, hence $\iota (c_i^{-1} w ) = S^{j_u-1} U^{b-m+\alpha} \mu \; ({\rm mod} \; S^{j_u})$. By the definition of $\delta_{j_u-1}$, we must therefore have $deg(w)/2 \ge \delta_{j_u-1} + (j_u-1)$. But $deg(w) = s-2$, so $s \ge \delta_{j_u-1} + j_u$. On the other hand, equating degrees in Equation (\ref{equ:w}) gives $s = \beta_u + deg(x_u)/2 = \beta_u + \delta_{j_u} + j_u$. This gives
\[
\beta_u = s - \delta_{j_u} - j_u \ge \delta_{j_u-1} - \delta_{j_u} = n_u.
\]
But this contradicts $\beta_u \le n_u-1$. So no such linear relation can exist.
\end{proof}

\begin{corollary}\label{cor:ineq}
We have that $\delta_0 - \delta_j \le dim_{\mathbb{F}}( J_{red}^*) \le dim_{\mathbb{F}}( HF_{red}^*(Y , \mathfrak{s}) )$ for all $j \ge 0$.
\end{corollary}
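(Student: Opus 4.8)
The plan is to deduce the corollary directly from Lemma \ref{lem:indep}, together with the bookkeeping of where the sequence $\{\delta_j\}$ jumps. First I would recall the structure of this sequence: by the choice of $0 < j_1 < \cdots < j_r$, the decreasing sequence $\delta_0 \ge \delta_1 \ge \cdots$ is constant on each block $j_{i-1} \le j < j_i$ (with the convention $j_0 = 0$) and stabilises at $\delta_{j_r}$ for $j \ge j_r$; moreover $\delta_{j_i} = \delta_0 - (n_1 + \cdots + n_i)$. Hence for any $j \ge 0$ there is an index $i$ with $0 \le i \le r$ (the case $i=0$ meaning the empty sum, which occurs for $0 \le j < j_1$) such that $\delta_0 - \delta_j = n_1 + \cdots + n_i \le n_1 + \cdots + n_r$.

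Next I would invoke part (1) of Lemma \ref{lem:indep}: the family $\{ U^a x_i \mid 1 \le i \le r,\ 0 \le a \le n_i - 1 \}$ is linearly independent over $\mathbb{F}$ and has exactly $n_1 + \cdots + n_r$ elements (immediate from the index ranges), all of which lie in $J^*_{red}$ by the conclusion of the same lemma. Therefore $\dim_{\mathbb{F}}( J^*_{red} ) \ge n_1 + \cdots + n_r \ge \delta_0 - \delta_j$, which is the first inequality of the corollary.

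For the second inequality I would recall from Section \ref{sec:beh} that $J^* = E_\infty^{0,*}$ was identified with an $\mathbb{F}[U]$-submodule of $H^* = HSW^*(Y,\mathfrak{s})$, and that under this identification $J^*_{red} \subseteq H^*_{red}$. Since $HSW^*(Y , \mathfrak{s}) \cong HF^+_*(Y , \mathfrak{s})$ as $\mathbb{F}[U]$-modules, we have $H^*_{red} \cong HF^+_{red}(Y , \mathfrak{s})$, so $\dim_{\mathbb{F}}( J^*_{red} ) \le \dim_{\mathbb{F}}( HF_{red}^*(Y , \mathfrak{s}) )$. Combining the two inequalities yields the claim.

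I do not expect a genuine obstacle here: all the substantive work is already contained in Lemma \ref{lem:indep}, and the corollary is essentially a translation of the independence statement into a dimension count. The only mild care needed is the uniform treatment of the edge cases $j = 0$ and $j \ge j_r$ via the empty-sum convention for the $n_i$, and the observation that the linearly independent family produced by the lemma has precisely $n_1 + \cdots + n_r$ elements.
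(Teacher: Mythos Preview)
Your proposal is correct and follows essentially the same approach as the paper: both bound $\delta_0 - \delta_j$ by $n_1 + \cdots + n_r$ using the monotonicity and stabilisation of $\{\delta_j\}$, then invoke the linearly independent family from Lemma~\ref{lem:indep} to get $n_1 + \cdots + n_r \le \dim_{\mathbb{F}}(J^*_{red})$, with the second inequality coming from $J^*_{red} \subseteq H^*_{red}$. Your write-up is in fact slightly more explicit than the paper's about the second inequality and the edge cases, but there is no substantive difference.
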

\begin{proof}
Since $\delta_j = \delta_{j_r}$ for all $j \ge j_r$, we have that $\delta_0 - \delta_j \le \delta_0 - \delta_{j_r}$ for all $j$. But the subspace $span_{\mathbb{F}}\{ U^a x_i \; | \; 1 \le i \le r, 0 \le a \le n_i -1 \} \subseteq J^*$ of Lemma \ref{lem:indep} has dimension $n_1 + \cdots + n_r = \delta_0 - \delta_{j_r}$.
\end{proof}

Let $j'(Y,\mathfrak{s})$ denote the smallest $j$ such that $\delta_j$ attains its minimum. Thus $j'(Y,\mathfrak{s}) = j^{(p)}(Y,\mathfrak{s})$ if $p$ is odd and $j'(Y,\mathfrak{s}) = j^{(2)}(Y,\mathfrak{s})/2$ if $p=2$, by Remark \ref{rem:j2}.

\begin{proposition}\label{prop:jinv}
Let $Y$ be a rational homology $3$-sphere, $\tau \colon Y \to Y$ an orientation preserving diffeomorphism of order $p$, where $p$ is prime. Let $\mathfrak{s}$ be a spin$^c$-structure preserved by $\tau$. Suppose that the following conditions hold:
\begin{itemize}
\item[(1)]{$HF^+(Y , \mathfrak{s})$ is non-zero only in degrees $i = d(Y , \mathfrak{s}) \; ({\rm mod} \; 2)$.}
\item[(2)]{$\delta_0 - \delta_j = dim_{\mathbb{F}}( HF_{red}^*(Y , \mathfrak{s}) )$ for some $j \ge 0$.}
\item[(3)]{$HF_{red}^*(Y , \mathfrak{s}) \neq 0$.}
\item[(4)]{Let $\ell^+(Y,\mathfrak{s})$ denote the highest non-zero degree in $HF^+_{red}(Y,\mathfrak{s})$. Any non-zero element in the image of $U \colon HF_{red}^+(Y , \mathfrak{s}) \to HF_{red}^+(Y , \mathfrak{s})$ has degree strictly less than $\ell^+(Y,\mathfrak{s})$.}
\end{itemize}

Then
\[
j'(Y,\mathfrak{s}) = \ell^+(Y,\mathfrak{s})/2 - \delta_0(Y,\mathfrak{s}) + dim_{\mathbb{F}}( HF_{red}^*(Y , \mathfrak{s}) ).
\]
Moreover, if $Y$ is an integral homology sphere and $\delta(Y) \in 2 \mathbb{Z}$, then
\[
j'(Y) = \ell^+(Y)/2 - \delta_0(Y) + \delta(Y) + \lambda(Y),
\]
where $\lambda(Y)$ is the Casson invariant of $Y$.
\end{proposition}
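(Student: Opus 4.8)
The plan is to turn hypothesis (2) into a chain of equalities via Corollary \ref{cor:ineq}, read off $j'(Y,\mathfrak{s})$ from the basis produced by Lemma \ref{lem:indep}, and then verify that the degree of the ``last'' generator $x_r$ coincides with $\ell^+(Y,\mathfrak{s})$; the second statement will then follow by inserting the Ozsv\'ath--Szab\'o formula for the Casson invariant. First I would record the consequences of hypothesis (2). Corollary \ref{cor:ineq} gives $\delta_0 - \delta_j \le dim_{\mathbb{F}}(J^*_{red}) \le dim_{\mathbb{F}}(HF_{red}^+(Y,\mathfrak{s}))$ for all $j$, and moreover $\delta_0 - \delta_j \le \delta_0 - \delta_{j_r} \le dim_{\mathbb{F}}(J^*_{red})$ (applying Corollary \ref{cor:ineq} at $j=j_r$). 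Hypothesis (2) asserts that the first chain is an equality for some $j$, and this forces every one of these inequalities to be an equality. Hence $J^*_{red}$ and $HF_{red}^+(Y,\mathfrak{s})$ have the same finite dimension, so $J^*_{red} = HF_{red}^+(Y,\mathfrak{s})$ (as $J^*_{red}$ is a submodule), $\delta_\infty = \delta_{j_r} = \delta_0 - dim_{\mathbb{F}}(HF_{red}^+(Y,\mathfrak{s}))$, and $j_r$ is the least index at which $\delta$ attains its minimum, i.e. $j'(Y,\mathfrak{s}) = j_r$. (Hypothesis (3) ensures $r \ge 1$, so that $j_r$ is defined; hypothesis (1) is what permits working with the even machinery of \textsection\ref{sec:beh} and, when $p=2$, the passage between $j^{(2)}$ and $j'$ via Remark \ref{rem:j2}.)

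Next I would invoke Lemma \ref{lem:indep}. Since $n_1 + \cdots + n_r = \delta_0 - \delta_{j_r} = dim_{\mathbb{F}}(J^*_{red})$, the linearly independent set $\{ U^a x_i \; | \; 1 \le i \le r, \ 0 \le a \le n_i - 1 \}$ is in fact a basis of $J^*_{red} = HF_{red}^+(Y,\mathfrak{s})$, and Lemma \ref{lem:indep}(2) gives
\[
j_r = deg(x_r)/2 - \delta_{j_r} = deg(x_r)/2 - \delta_0(Y,\mathfrak{s}) + dim_{\mathbb{F}}(HF_{red}^+(Y,\mathfrak{s})).
\]
Thus the first assertion of the proposition is equivalent to the identity $deg(x_r) = \ell^+(Y,\mathfrak{s})$. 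I expect this identity to be the only genuinely delicate point, and it is the place where hypothesis (4) is used.

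To prove $deg(x_r) = \ell^+(Y,\mathfrak{s})$, observe that $deg(x_r) \le \ell^+(Y,\mathfrak{s})$ is immediate since $x_r \in HF_{red}^+(Y,\mathfrak{s})$. For the reverse inequality set $D_i = deg(x_i) + 2(n_i - 1)$, the top degree of the block $\{ x_i , Ux_i , \dots , U^{n_i - 1} x_i \}$. As $\{ U^a x_i \}$ is a basis of $HF_{red}^+(Y,\mathfrak{s})$, whose highest nonzero degree is $\ell^+(Y,\mathfrak{s})$, we have $\max_i D_i = \ell^+(Y,\mathfrak{s})$. Using $\delta_{j_i} = deg(x_i)/2 - j_i$ and $n_{i+1} = \delta_{j_i} - \delta_{j_{i+1}}$, a short computation gives the recursion
\[
D_{i+1} - D_i = 2\,( j_{i+1} - j_i - n_i ).
\]
Now suppose, for contradiction, that $deg(x_r) < \ell^+(Y,\mathfrak{s})$. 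By hypothesis (4), no nonzero element in the image of $U$ on $HF_{red}^+(Y,\mathfrak{s})$ has degree $\ell^+(Y,\mathfrak{s})$; hence a basis element of degree $\ell^+(Y,\mathfrak{s})$ must be one of the $x_i$ (not $U^a x_i$ with $a \ge 1$), and any such $i$ has $n_i = 1$, since otherwise $Ux_i$ would be a nonzero basis element of degree $\ell^+(Y,\mathfrak{s}) + 2$. Let $i^*$ be the largest index with $deg(x_{i^*}) = \ell^+(Y,\mathfrak{s})$; such an $i^*$ exists, and by our assumption $i^* < r$, so $x_{i^*+1}$ is defined. As $D_{i^*} = \ell^+(Y,\mathfrak{s})$ is the maximum and $n_{i^*} = 1$, the recursion gives $0 \ge D_{i^*+1} - D_{i^*} = 2(j_{i^*+1} - j_{i^*} - 1)$, and since $j_{i^*+1} > j_{i^*}$ this forces $j_{i^*+1} = j_{i^*} + 1$ and $D_{i^*+1} = \ell^+(Y,\mathfrak{s})$. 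If $n_{i^*+1} \ge 2$ then $U^{n_{i^*+1}-1} x_{i^*+1}$ is a nonzero basis element of degree $D_{i^*+1} = \ell^+(Y,\mathfrak{s})$ lying in the image of $U$, contradicting hypothesis (4); hence $n_{i^*+1} = 1$ and $deg(x_{i^*+1}) = D_{i^*+1} = \ell^+(Y,\mathfrak{s})$, contradicting the maximality of $i^*$. Therefore $deg(x_r) = \ell^+(Y,\mathfrak{s})$, and the first part follows.

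For the second part, let $Y$ be an integral homology sphere with $\delta(Y) \in 2\mathbb{Z}$. By hypothesis (1), $HF^+(Y)$ is supported in degrees $\equiv d(Y) \pmod 2$, which for an integral homology sphere means even degrees; consequently $HF_{red}^+(Y)$ is concentrated in even degrees and $\chi(HF_{red}^+(Y)) = dim_{\mathbb{F}}(HF_{red}^+(Y))$, this value being independent of the coefficient field. Combining this with the Ozsv\'ath--Szab\'o formula $\lambda(Y) = \chi(HF_{red}^+(Y)) - \tfrac{1}{2} d(Y)$ for the Casson invariant yields $dim_{\mathbb{F}}(HF_{red}^+(Y)) = \lambda(Y) + \delta(Y)$, and substituting this into the formula of the first part gives $j'(Y) = \ell^+(Y)/2 - \delta_0(Y) + \delta(Y) + \lambda(Y)$, as claimed.
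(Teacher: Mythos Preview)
Your proof is correct and follows essentially the same approach as the paper: both arguments use Corollary \ref{cor:ineq} to force the equalities $J^*_{red} = HF_{red}^+(Y,\mathfrak{s})$ and $n_1 + \cdots + n_r = \dim_{\mathbb{F}} HF_{red}^+(Y,\mathfrak{s})$, reduce to showing $\deg(x_r) = \ell^+(Y,\mathfrak{s})$, and then use the recursion relating the top degrees $D_i$ to the jumps $j_i - j_{i-1}$ together with hypothesis (4) to propagate the property ``$\deg(x_i) = \ell^+$'' from any index where it holds to the next one. The only cosmetic difference is that the paper phrases this propagation as a forward induction (showing the set $K = \{i : 2\alpha_i = \ell^+\}$ is an interval ending at $r$), whereas you package it as a contradiction from the largest such index $i^* < r$; the computations are identical.
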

\begin{proof}
Let $x_1, \dots , x_r \in J_{red}^*$ be as in Lemma \ref{lem:indep}. It follows that $j'(Y,\mathfrak{s}) = j_r$. Choose $j$ such that $\delta_0 - \delta_j = dim_{\mathbb{F}}( HF_{red}^*(Y , \mathfrak{s}) )$. From Corollary \ref{cor:ineq}, we have 
\[
dim_{\mathbb{F}}( HF_{red}^*(Y , \mathfrak{s}) ) = \delta_0 - \delta_j \le \delta_0 - \delta_{j_r} = n_1 + \cdots + n_r \le  dim_{\mathbb{F}}( J_{red}^*) \le dim_{\mathbb{F}}( HF_{red}^*(Y , \mathfrak{s}) ).
\]
It follows that we must have equalities throughout. So $\delta_0 - \delta_{j_r} = n_1 + \cdots + n_r = dim_{\mathbb{F}}( HF_{red}^*(Y , \mathfrak{s}) )$. So $\{ U^a x_i \; | \; 1 \le i \le r, 0 \le a \le n_i -1 \}$ is a basis for $HF_{red}^*(Y,\mathfrak{s})$. Set $\alpha_i = deg(x_i)/2$. Lemma \ref{lem:indep} (2) implies that $\delta_{j_i} = \alpha_i - j_i$ for $1 \le i \le r$. Setting $\alpha_0 = \delta_0$ and $j_0 = 0$, we also have $\delta_{j_i} = \alpha_i - j_i$ for $i=0$. Further, we have that $\delta_{j_i} - \delta_{j_{i-1}} = n_i$ for $1 \le i \le r$. Hence we obtain $(\alpha_i + n_i - 1) - \alpha_{i-1} = j_i - j_{i-1} - 1$. But $j_i > j_{i-1}$, so $j_i - j_{i-1} - 1 \ge 0$, giving
\begin{equation}\label{equ:alphain}
(\alpha_i + n_i - 1) \ge \alpha_{i-1}.
\end{equation}
Now we observe that $2(\alpha_i + n_i - 1)$ is the degree of $U^{n_i-1} x_i$ and $2\alpha_{i-1}$ is the degree of $\alpha_{i-1}$. Consider the space $A = HF^{\ell^+(Y,\mathfrak{s})}_{red}(Y , \mathfrak{s})$ of top degree elements in $HF^+_{red}(Y , \mathfrak{s})$. Let $a = dim_{\mathbb{F}}(A) > 0$. By assumption (4), no non-zero element of $A$ is in the image of $U$. Hence there must exist indices $k_1 < k_2 < \dots < k_a$ such that $x_{k_1}, \dots , x_{k_a}$ is a basis for $A$. In fact, $K = \{k_1, \dots , k_a\}$ is precisely the set of indices $i$ such that $2\alpha_i = \ell^+(Y , \mathfrak{s})$. Suppose $i \in K$ and $i < r$. Then (\ref{equ:alphain}) gives $2(\alpha_{i+1} + n_{i+1} - 1) \ge 2\alpha_{i} = \ell^+(Y,\mathfrak{s})$. But $2(\alpha_{i+1} + n_{i+1} - 1)$ is the degree of $U^{n_{i+1}-1} x_{i+1}$ and $2 \ell^+(Y , \mathfrak{s})$ is the highest degree in $HF^+_{red}(Y,\mathfrak{s})$. So we must have $2(\alpha_{i+1} + n_{i+1} - 1) = \ell^+(Y,\mathfrak{s})$. But from assumption (4), this can only happen if $n_{i+1} = 1$, hence $2\alpha_{i+1} = \ell^+(Y , \mathfrak{s})$ and so $i+1 \in K$. So if $i \in K$ and $i < r$, then $i+1 \in K$. It follows that $K = \{ r-a+1 , r-a +2 , \dots , r\}$. In particular, $r \in K$ and $\alpha_r = \ell^+(Y,\mathfrak{s})/2$. Therefore
\[
j'(Y , \mathfrak{s}) = j_r = \alpha_r - \delta_{j_r} = \ell^+(Y , \mathfrak{s})/2 - \delta_0(Y , \mathfrak{s}) + dim_{\mathbb{F}}( HF_{red}^*(Y , \mathfrak{s}) ),
\]
where the last equality holds since $\delta_0 - \delta_{j_r} = dim_{\mathbb{F}}( HF_{red}^*(Y , \mathfrak{s}) )$.

Lastly, suppose that $Y$ is an integral homology sphere and $\delta(Y) \in 2 \mathbb{Z}$. From assumption (1), it follows that $HF_{red}^+(Y)$ is non-zero only in even degrees. Then from \cite[Theorem 1.3]{os}, it follows that $ dim_{\mathbb{F}}( HF_{red}^*(Y)) = \delta(Y) + \lambda(Y)$, where $\lambda(Y)$ is the Casson invariant. Hence we have that $j'(Y) = \ell^+(Y)/2 - \delta_0(Y) + \delta(Y) + \lambda(Y)$.
\end{proof}
\subsection{Delta invariants of Brieskorn spheres}\label{sec:deltabri}

Given pairwise coprime integers $a_1 , \dots , a_r$ with $r \ge 3$ define the Brieskorn homology sphere $\Sigma(a_1 , \dots , a_r)$ to be the link of the singularity at the origin of the variey $\{ (z_1 , \dots , z_r ) \in \mathbb{C}^n \; | \; b_{i1} z_1^{a_1} + b_{i2}z_2^{a_2} + \cdots + b_{ir}z_r^{a_r} = 0, \; 1 \le i \le r-2 \}$, where $( b_{ij} )$ is a sufficiently generic $(r-2) \times r$ matrix. Then $\Sigma(a_1 , a_2 , \dots , a_r)$ is the Seifert homology sphere $M(e_0 , (a_1,b_1) , \dots , (a_r , b_r) )$, where $e_0 , b_1, \dots , b_r$ are uniquely determined by the conditions that $0 < b_j < a_j$ and 
\begin{equation}\label{equ:integral}
e_0 + \sum_{j=1}^r \frac{b_j}{a_j} = -\frac{1}{a_1 a_2 \cdots a_r}.
\end{equation}
Note that $e_0 < 0 $ because $b_j > 0$ for all $j$.

Let $Y = \Sigma(a_1 , \dots , a_r)$ be a Brieskorn homology sphere. The Seifert structure of $Y$ gives a circle action. For any prime $p$, the restriction of the circle action to $\mathbb{Z}_p \subset S^1$ defines an action of the finite cyclic group $G = \mathbb{Z}_p$. We are interested in studying the equivariant Seiberg--Witten--Floer cohomology groups of $Y$ and $-Y$ with respect to this action. Since $Y$ has a unique spin$^c$-structure we will omit it from the notation and write $HSW^*_G(Y)$ and $HSW^*_G(-Y)$. Similarly the $\delta$-invariants will be denoted $\delta_j^{(p)}(Y)$ and $\delta_j^{(p)}(-Y)$.

To understand the equivariant Seiberg--Witten--Floer cohomology groups of $Y$ and $-Y$, we will use the spectral sequence relating it to the corresponding non-equivariant groups. We have
\begin{align*}
HSW^*(Y)  \cong HF^+(Y) & \cong \mathbb{F}[U]_{d(Y)} \oplus HF_{red}^+(Y) \\
HSW^*(-Y) \cong HF^+(-Y) & \cong \mathbb{F}[U]_{-d(Y)} \oplus HF_{red}^+(-Y)
\end{align*}

From \cite{os2}, we have that $HF^+(-Y)$ is concentrated in even degrees. It follows that $d(Y)$ is even and $HF_{red}^+(Y)$ is concentrated in {\em odd} degrees. Furthermore $HF^+(-Y)$ can be computed from the graded roots algorithm \cite{nem}. This algorithm implies that $HF_{red}^+(-Y)$ is concentrated in degrees at least $-d(Y)$. Dually this implies that $HF_{red}^+(Y)$ is concentrated in degrees at most $d(Y)-1$.

\begin{proposition}\label{prop:deltaprop}
Let $Y = \Sigma(a_1, \dots , a_r)$ be a Brieskorn homology sphere. We have:
\begin{itemize}
\item[(1)]{${\rm rk}( HF_{red}^+(Y) ) = -\delta(Y) - \lambda(Y)$ where $\lambda(Y)$ is the Casson invariant of $Y$.}
\item[(2)]{$\delta_j^{(p)}(Y) = \delta_\infty^{(p)}(Y)$ for all $j \ge 0$.}
\item[(3)]{$\delta(Y) \le \delta_\infty^{(p)}(Y) \le -\lambda(Y)$ and $\lambda(Y) \le \delta_j^{(p)}(-Y) \le -\delta(Y)$ for all $j \ge 0$.}
\end{itemize}
\end{proposition}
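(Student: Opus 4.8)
The plan is to treat the three parts in turn, getting most of the mileage from the analysis of Section~\ref{sec:beh} applied to $-Y$ (which does satisfy the standing parity hypothesis there, since $HF^+(-Y)$ is concentrated in even degrees) together with the general properties of the $\delta_j^{(p)}$ recorded in \cite{bh}. For part (1) I would simply combine the Ozsv\'ath--Szab\'o formula $\lambda(Y) = \chi(HF^+_{red}(Y)) - \delta(Y)$ (the same input used in the proof of Proposition~\ref{prop:jinv}, see \cite{os}) with the fact, recalled just before the proposition, that $HF^+_{red}(Y)$ lives in odd degrees, so that $\chi(HF^+_{red}(Y)) = -{\rm rk}(HF^+_{red}(Y))$; this immediately gives ${\rm rk}(HF^+_{red}(Y)) = -\delta(Y) - \lambda(Y)$.

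Next I would pin down the bottom of the equivariant tower of $-Y$. Since $HF^+(-Y)$ vanishes in all degrees below $d(-Y)$ — the tower begins at $d(-Y)$ and $HF^+_{red}(-Y)$ lies in degrees $\ge d(-Y)$ by N\'emethi's graded-roots description \cite{nem} — the bottom tower class $\lambda'$ is $\mathbb{Z}_p$-invariant and carries no differential in the spectral sequence of Section~\ref{sec:ss}: the possible targets $E_r^{r,d(-Y)-r+1}$ are subquotients of $H^r(\mathbb{Z}_p; HF^{d(-Y)-r+1}(-Y)) = 0$ for $r\ge 2$. Hence $\lambda'$ survives to $E_\infty^{0,*}$, so the generator of the free part of $J^*(-Y)$ sits in degree exactly $d(-Y)$ and $\delta_0^{(p)}(-Y) = \delta(-Y) = -\delta(Y)$. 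The statements in part (3) concerning $-Y$ then follow: the upper bound $\delta_j^{(p)}(-Y)\le -\delta(Y)$ is just monotonicity $\delta_j^{(p)}(-Y)\le \delta_0^{(p)}(-Y)$, while Corollary~\ref{cor:ineq} applied to $-Y$ gives $\delta_0^{(p)}(-Y) - \delta_j^{(p)}(-Y) \le {\rm rk}(HF^+_{red}(-Y)) = {\rm rk}(HF^+_{red}(Y))$, which with part (1) yields $\delta_j^{(p)}(-Y) \ge -\delta(Y) - {\rm rk}(HF^+_{red}(Y)) = \lambda(Y)$.

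For part (2) and the lower bound in part (3), I would first note that $\delta_\infty^{(p)}(-Y)\le -\delta(Y)$, combined with the general inequality $\delta_\infty^{(p)}(Y) + \delta_\infty^{(p)}(-Y)\ge 0$ of \cite{bh}, gives $\delta_\infty^{(p)}(Y)\ge \delta(Y)$, hence $\delta_j^{(p)}(Y)\ge \delta_\infty^{(p)}(Y)\ge \delta(Y)$ for all $j$. To see that the sequence $\{\delta_j^{(p)}(Y)\}$ is in fact \emph{constant}, I would run the argument of Lemma~\ref{lem:indep} on the even part $\widehat H^{ev}$ of $HSW^*_G(Y)$ (the degrees $\equiv d(Y)\bmod 2$, which are the only ones entering the delta invariants; for $p=2$ one reduces to the even-index subsequence using monotonicity, as in Lemma~\ref{lem:even}). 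Any strict drop $\delta_{j_i}<\delta_{j_i-1}$ with $j_i\ge 1$ would be witnessed, as in Lemma~\ref{lem:indep}, by a class $x_i$ in the reduced part $J^*_{red}$ of $J^*=E_\infty^{0,*}$ with $\delta_{j_i}= \deg(x_i)/2 - j_i$; but for $Y$ this reduced part comes from $E_\infty^{1,*}$, hence from $H^1(\mathbb{Z}_p; HF^*_{red}(Y))$, and since $HF^+_{red}(Y)$ is supported in degrees $\le d(Y)-1$ the class $x_i$ has total degree $\le d(Y)$. Then $\delta_{j_i}\le \delta(Y) - j_i \le \delta(Y)-1 < \delta(Y) \le \delta_\infty^{(p)}(Y)\le \delta_{j_i}$, a contradiction. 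So no drops occur, $\delta_j^{(p)}(Y)=\delta_0^{(p)}(Y)=\delta_\infty^{(p)}(Y)$ for all $j$, which is part (2) and also gives the lower bound $\delta_\infty^{(p)}(Y)\ge\delta(Y)$ of part (3).

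The remaining and hardest point is the upper bound $\delta_\infty^{(p)}(Y)\le -\lambda(Y)$. By part (2) this is equivalent, via part (1), to $\delta_0^{(p)}(Y) \le \delta(Y) + {\rm rk}(HF^+_{red}(Y))$. Writing $\theta = U^a\lambda\in E_\infty^{0,*}$ for the bottom of the surviving part of the tower, so $\delta_0^{(p)}(Y) = \delta(Y) + a$, the task is to show $a \le {\rm rk}(HF^+_{red}(Y))$. My plan here is a spectral-sequence count: the $a$ missing rungs $\lambda, U\lambda, \dots, U^{a-1}\lambda$ are each killed by some differential, and since $U^{-1}HSW^*_G(Y)$ is a free module the localised spectral sequence degenerates, so these differentials all land in the $U$-torsion part of $E_*^{\ge 1,*}$, which is controlled by $H^{\ge 1}(\mathbb{Z}_p; HF^*_{red}(Y))$; a bookkeeping argument, using that the $S$-multiples of a class killed at an earlier page are again zero, bounds the number of rungs killed by the $\mathbb{F}[S]$-rank of that torsion module, namely ${\rm rk}(HF^+_{red}(Y))$. (Alternatively, this inequality should follow from the equivariant Fr\o yshov inequality \cite[Theorem 5.3]{bh} applied to the canonical negative-definite plumbing bounding $Y$, over which the Seifert $\mathbb{Z}_p$-action extends homologically trivially.) Controlling how far the equivariant tower of $Y$ rises above the $d$-invariant level is the genuine obstacle; everything else is formal given Section~\ref{sec:beh}, the structure of $HF^+(\pm Y)$ for Brieskorn spheres, and the basic properties of the $\delta_j^{(p)}$.
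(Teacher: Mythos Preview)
Your treatment of part (1) and of the inequalities for $-Y$ in part (3) is fine, and your route to $\delta_j^{(p)}(-Y)\ge\lambda(Y)$ via Corollary~\ref{cor:ineq} applied to $-Y$ is a legitimate alternative to the paper's route (which instead uses the duality inequality $\delta_j^{(p)}(-Y)+\delta_0^{(p)}(Y)\ge 0$ from \cite{bh} together with the upper bound $\delta_0^{(p)}(Y)\le -\lambda(Y)$). Likewise, your spectral-sequence count for the upper bound $\delta_\infty^{(p)}(Y)\le -\lambda(Y)$ is exactly the content of \cite[Proposition~5.10]{bh}, which the paper simply cites.

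The genuine issue is your argument for part~(2). You invoke Lemma~\ref{lem:indep} (and, for $p=2$, Lemma~\ref{lem:even}) for $Y$, but the entire framework of Section~\ref{sec:beh}---including the identification $J^*=E_\infty^{0,*}\subset H^*$ and the even filtration $\mathcal{F}_j^{ev}$---is set up under the standing hypothesis that $HSW^*$ is concentrated in degrees congruent to $d$ mod $2$. For a Brieskorn sphere $Y$ this fails: the tower sits in even degrees while $HF^+_{red}(Y)$ sits in odd degrees. So you cannot write ``$J^*=E_\infty^{0,*}$'' and then in the same breath say ``this reduced part comes from $E_\infty^{1,*}$''; those are incompatible statements, and Lemma~\ref{lem:indep} as written does not apply. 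Your adaptation can be made to work, but it requires redoing the whole setup of Section~\ref{sec:beh} without the parity hypothesis, which is more than a one-line fix.

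The paper avoids this entirely by working not with $\widehat H^{ev}$ and $J^*$ but with the \emph{full} bigraded $E_\infty(Y)$ and the decomposition of the induced $U$-action $\widehat U = U_{(0,2)}+U_{(1,1)}+\cdots$ into bidegree components. The key observation is the same degree bound you spotted, but phrased in terms of the second bidegree $q$: every homogeneous element of $E_\infty(Y)_{red}$ has $q\le d(Y)-1$, while the tower part $\mathbb{F}[U,R,S]/(R^2)\,U^m\theta$ has $q\ge d(Y)+2m$. When $m>0$ there is a gap of at least $2$ between these ranges, so each component $U_{(j,2-j)}$ (which raises $q$ by at most $1$ for $j\ge 1$) sends $E_\infty(Y)_{red}$ back into $E_\infty(Y)_{red}$; hence $E_\infty(Y)_{red}$ is $\widehat U$-stable and $\delta_j^{(p)}(Y)=m+\delta(Y)$ for every $j$. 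The case $m=0$ needs a separate argument by contradiction using the duality inequality from \cite[Theorem~4.4]{bh}. This bidegree approach is more direct than adapting Lemma~\ref{lem:indep} and sidesteps the parity issue altogether.
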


\begin{proof}
From \cite[Theorem 1.3]{os} we have that $\chi( HF_{red}^+(Y) ) = \delta(Y) + \lambda(Y)$. But $HF_{red}^+(Y)$ is concentrated in odd degrees, which gives (1).

We will prove (2) and (3) in the case that $p$ is odd. The case $p=2$ is similar and omitted for brevity. Recall that for odd primes we have $H^*_{\mathbb{Z}_p} \cong \mathbb{F}[R,S]/(R^2)$ with $deg(R)=1$, $deg(S) = 2$.

As in Section \ref{sec:ss}, let $\{ E_r^{p,q}(Y) , d_r \}$ denote the spectral sequence for the equivariant Seiberg--Witten--Floer cohomology $HSW^*_{\mathbb{Z}_p}(Y)$. We have a filtration $\{ \mathcal{F}_j^*(Y) \}_{j \ge 0}$ on $HSW^*_{\mathbb{Z}_p}(Y)$ such that $E_\infty(Y)$ is isomorphic to the associated graded group 
\[
E_\infty(Y) \cong Gr( HSW^*_{\mathbb{Z}_p}(Y)) = \bigoplus_{j \ge 0} \mathcal{F}_j(Y)/\mathcal{F}_{j+1}(Y)
\]
as $H^*_{S^1 \times \mathbb{Z}_p}$-modules. Furthermore, we have that
\[
E_2^{p,q}(Y) = H^p( \mathbb{Z}_p ; HF^{+}_q(Y)).
\]
Now since the $\mathbb{Z}_p$-action on $Y$ is contained in a circle action, the generator of the $\mathbb{Z}_p$-action is smoothly isotopic to the identity and acts trivially on $HF^+(Y)$. Hence we further have
\[
E_2^{p,q}(Y) \cong HF^{+}_q(Y) \otimes_{\mathbb{F}} H^p_{\mathbb{Z}_p}.
\]
Choose an element $\theta \in HF^+_{d(Y)}(Y)$ such that $U^k \theta \neq 0$ for all $k \ge 0$. Then $HF^+(Y) \cong \mathbb{F}[U]\theta \oplus HF^+_{red}(Y)$ and hence
\begin{align*}
E_2(Y) &\cong H^*_{\mathbb{Z}_p}[U]\theta \oplus \left( HF_{red}^+(Y) \otimes_{\mathbb{F}} H^*_{\mathbb{Z}_p} \right) \\
& \cong \frac{\mathbb{F}[U,R,S]}{(R^2)} \theta \oplus \frac{HF_{red}^+(Y)[R,S]}{(R^2)}.
\end{align*}

We have a similar spectral sequence $E_r^{p,q}(-Y)$ and filtration $\{ \mathcal{F}_j(-Y) \}$ on $HSW^*_{\mathbb{Z}_p}(-Y)$ such that $E_\infty^{*,*}(-Y)$ is isomorphic to the associated graded $H^*_{S^1 \times \mathbb{Z}_p}$-module of the filtration. We have
\[
E_2(-Y) \cong \frac{\mathbb{F}[U,R,S]}{(R^2)} \omega \oplus \frac{HF_{red}^+(-Y)[R,S]}{(R^2)}
\]
for some $\omega$ of bi-degree $(0 , -d(Y))$. Now recall that $HF_{red}^+(-Y)$ is concentrated in degrees at least $-d(Y)$. Hence $d_r(\omega) = 0$ for all $r \ge 2$. It immediately follows that $\delta_0^{(p)}(-Y) = -\delta(Y)$ and hence $\delta_j^{(p)}(-Y) \le -\delta(Y)$ for all $j \ge 0$.

Define $E_r(Y)_{red}$ to be the subgroup of $E_r(Y)$ consisting of elements $x$ such that $U^k x = 0$ for some $k \ge 0$. So for $r = 2$ we have $E_2(Y)_{red} \cong HF_{red}^+(Y)[R,S]/(R^2)$ and 
\[
E_2(Y) \cong \frac{\mathbb{F}[U,R,S]}{(R^2)} \theta \oplus E_2(Y)_{red}.
\]

As explained in \cite[\textsection 5]{bh}, the image of the differential $d_r$ is contained in $E_r(Y)_{red}$. It follows that for each $r \ge 2$, $E_r(Y)$ is of the form
\[
E_r(Y) \cong \frac{\mathbb{F}[U,R,S]}{(R^2)} U^{m_r}\theta \oplus E_r(Y)_{red}
\]
for some increasing sequence $0 = m_0 \le m_1 \le \dots $. It is further shown in \cite[\textsection 5]{bh} that the sequence $m_r$ must be eventually constant. Hence we have
\[
E_\infty(Y) \cong \frac{\mathbb{F}[U,R,S]}{(R^2)} U^{m}\theta \oplus E_\infty(Y)_{red}
\]
where $m = \lim_{r \to \infty} m_r$.

Since we work over a field $\mathbb{F}$, it is possible to (non-canonically) split the filtration $\{ \mathcal{F}_j(Y) \}$ giving an isomorphism of $\mathbb{F}$-vector spaces
\[
HSW^*_{\mathbb{Z}_p}(Y) \cong E_\infty^{*,*}(Y).
\]
Under this isomorphism $\mathcal{F}_j(Y)$ corresponds to the subspace of $E_\infty(Y)$ spanned by homogeneous elements of bi-degree $(p,q)$ where $p \ge j$. We fix a choice of such an isomorphism $\varphi \colon HSW^*_{\mathbb{Z}_p}(Y) \to E_\infty^{*,*}(Y)$ and henceforth identify $HSW^*_{\mathbb{Z}_p}(Y)$ with $E_\infty^{*,*}(Y)$. This isomorphism will typically not be an isomorphism of $H^*_{S^1 \times \mathbb{Z}_p}$-modules. Nevertheless we can use the isomorphim $\varphi$ to induce a new $H^*_{S^1 \times \mathbb{Z}_p}$-module action on $E_\infty^{*,*}(Y)$ corresponding to the one on $HSW^*_{\mathbb{Z}_p}(Y)$. To be precise, if $c \in H^*_{S^1 \times \mathbb{Z}_p}$, then we define $\widehat{c} \colon E_\infty \to E_\infty$ by $\widehat{c} \, x = \varphi( c \varphi^{-1}(x) )$. Consider in particular $\widehat{U}$. The action of $\widehat{U}$ will typically not respect the bi-grading, but we can decompose it into homogeneous components as
\[
\widehat{U} = U_{(0,2)} + U_{(1,1)} + U_{(2,0)} + U_{(3,-1)} + \cdots
\]
where $U_{j , 2-j} \colon E_\infty^{p,q}(Y) \to E_\infty^{p+j , q+2-j}(Y)$. Note that there are only terms of bi-degree $(j,2-j)$ for $j \ge 0$ as $\widehat{U}$ respects the filtration $\{ \mathcal{F}_j(Y) \}$. Note also that $U_{(0,2)} = U$ because taking the associated graded module of the filtration recovers the original $H^*_{S^1 \times \mathbb{Z}_p}$-module structure on $E_\infty(Y)$.

Recall that $\delta_{j}^{(p)}(Y)$ is given by $i/2 - j$ where $i$ is the least degree such that there exists an $x \in HSW^i_{\mathbb{Z}_p}(Y)$ with $U^k x = S^j U^l \theta \; ({\rm mod} \; \mathcal{F}_{2j+1}(Y) )$ for some $k,l \ge 0$.

Recall that $E_\infty(Y) \cong \mathbb{F}[U,R,S]/(R^2) U^{m}\theta \, \oplus \, (E_\infty(Y))_{red}$. Since $\theta$ has bi-degree $(0, d(Y) )$, each homogeneous element in $ \mathbb{F}[U,R,S]/(R^2) U^{m}\theta$ has bi-degree $(p,q)$ where $q \ge d(Y) + 2m$. On the other hand, since $E_\infty(Y)_{red}$ is a subquotient of $HF_{red}^+(Y)[R,S]/(R^2)$ and $HF_{red}^+(Y)$ is concentrated in degrees at most $d(Y)-1$, we see that each homogeneous element $x$ in $E_\infty(Y)_{red}$ has bi-degree $(p,q)$, where $q \le d(Y)-1$. Then $U x = U_{(0,2)}x + U_{(1,1)}x + \cdots $ is a sum of homogeneous terms $U_{(j,2-j)}x$ of bidegree $(p+j , q + 2-j )$. Consider first $U_{(0,2)}x$. Since $U_{(0,2)} = U$, we see that $U_{(0,2)}x \in E_\infty(Y)_{red}$. Each of the remaining terms $U_{(j,2-j)} x$ for $j \ge 1$ has bi-degree of the form $(p',q') = (p+j , q+2-j)$, hence $q' = q+2-j \le q+1 \le d(Y)$. Thus if $m > 0$, then $U_{(j,2-j)}x$ must belong again to $E_\infty(Y)_{red}$. This implies that $E_\infty(Y)_{red}$ is an $\mathbb{F}[\widehat{U}]$-submodule of $E_\infty(Y)$. It follows easily from this that for any fixed $j \ge 0$, there exists a $k \ge 0$ such that $U^k E_\infty(Y)_{red} \subseteq \mathcal{F}_{2j+1}(Y)$. This further implies that $x = S^j U^m \theta$ is a minimal degree element such that $U^k x = S^j U^l \theta \; ({\rm mod} \; \mathcal{F}_{2j+1}(Y) )$ for some $k,l \ge 0$, hence $\delta_j^{(p)}(Y) = m + d(Y)/2$ for all $j \ge 0$. So $\delta_j^{(p)}(Y) = \delta_\infty^{(p)}(Y) = m + \delta(Y) > \delta(Y)$. From \cite[Proposition 5.10]{bh}, we also have that $m \le {\rm rk}( HF_{red}^+(Y) ) = -\delta(Y) - \lambda(Y)$, so $\delta_\infty^{(p)}(Y) = m + \delta(Y) \le -\lambda(Y)$.

If $m=0$, then we still have that $\widehat{U} E_\infty(Y)_{red} \subseteq E_\infty(Y)_{red}$, for if this were not the case then we would have some $x \in E_{\infty}(Y)_{red}^{a,d(Y)-1}$ such that $U_{(1,1)}x \notin E_{\infty}(Y)_{red}$. Further, since $HF_{red}^+(Y)$ is concentrated in degrees at most $d(Y)-1$, it follows that $U_{(0,2)}x = 0$ and that $U_{(1,1)}x = c \theta$ for some non-zero element $c \in H_{\mathbb{Z}_p}^{a+1}$. Then since $U_{(j,2-j)}x \in \mathcal{F}_{a+2}(Y)$ for $j \ge 2$, it follows that
\[
\widehat{U}x = c \theta \; ({\rm mod} \; \mathcal{F}_{a+2}(Y) ).
\]
Hence $\delta_{\mathbb{Z}_p , c }(Y) < \delta(Y)$. On the other hand, we have already shown that $\delta_{\mathbb{Z}_p , 1} = \delta_0^{(p)}(-Y) = -\delta(Y)$. Using this and \cite[Theorem 4.4]{bh}, we have
\[
0 \le \delta_{\mathbb{Z}_p , c}(Y) + \delta_{\mathbb{Z}_p , 1}(-Y) < \delta(Y) - \delta(Y) = 0,
\]
a contradiction. So even in the $m=0$ case we still have that $\widehat{U} E_{\infty}(Y)_{red} \subseteq E_{\infty}(Y)_{red}$ and so we again conclude that $\delta_j^{(p)}(Y) = \delta_\infty^{(p)}(Y)$ for all $j \ge 0$ and that $\delta_\infty^{(p)}(Y) \le -\lambda(Y)$. This proves (2).

We have proven everything except the inequality $\delta_j^{(p)}(-Y) \ge \lambda(Y)$. But from $\delta_0^{(p)}(Y) \le -\lambda(Y)$ and \cite[Theorem 4.4]{bh}, it follows that
\[
0 \le \delta_j^{(p)}(-Y) + \delta_0^{(p)}(Y) \le \delta_j^{(p)}(-Y) -\lambda(Y),
\]
for all $j \ge 0$. Hence $\delta_j^{(p)}(-Y) \ge \lambda(Y)$.
\end{proof}

\section{Computation of $\theta^{(c)}(T_{a,b})$}\label{sec:tab}

Let $a,b,c > 1$ be coprime integers and suppose also that $c$ is prime. In this section we will prove that $\theta^{(c)}(T_{a,b}) = (a-1)(b-1)/2$, where $T_{a,b}$ is the $(a,b)$-torus knot. Since $\Sigma_c( T_{a,b} ) = \Sigma(a,b,c)$, we will be interested in computing the invariant $j^{(c)}( \Sigma(a,b,c) )$ of the Brieskorn sphere $\Sigma(a,b,c)$. Consequently we are interested in studying the structure of the Floer homology of $\Sigma(a,b,c)$. Combined with the results of Section \ref{sec:fdelta}, we will be able to carry out the computation of $\theta^{(c)}(T_{a,b})$.

\subsection{Floer homology of $-\Sigma(a,b,c)$}\label{sec:abc}

Let $1 < a < b < c$ be pairwise coprime integers and let $\Sigma(a,b,c)$ denote the Brieskorn homology $3$-sphere oriented so that it is the link of the singularity $x^a + y^b + z^c = 0$ in $\mathbb{C}^3$. The graded roots algorithm of N\'emethi can be used to compute the Floer homology $HF^+(-\Sigma(a,b,c))$ \cite{nem}. In this case the algorithm produces a $\tau$ function $\tau \colon\mathbb{Z}_{\ge 0} \to \mathbb{Z}$ from which a graded root $(R ,\chi)$ may be constructed \cite[Example 3.4 (3)]{nem}. The graded root $(R , \chi)$ gives rise to an associated $\mathbb{F}[U]$-module $\mathbb{H}(R,\chi)$, which up to a grading shift coincides with $HF^+(-\Sigma(a,b,c))$.

For a Brieskorn sphere $\Sigma(a,b,c)$, the $\tau$ function $\tau(i)$ is given as follows \cite[\textsection 11]{nem}, \cite{caka}. Let $e_0 , p_1, p_2, p_3$ be the unique integers with $0 < p_1 < a$, $0 < p_2 < b$, $0 < p_3 < c$ and
\[
a b c e_0 + p_1 bc + a p_2 c + ab p_3 = -1.
\]
Then
\begin{equation}\label{equ:tau1}
\tau(i) = \sum_{n=0}^{i-1} \Delta(n),
\end{equation}
where
\begin{equation}\label{equ:tau2}
\Delta(n) = 1 - e_0 n - \left\lceil \frac{p_1 n}{a} \right\rceil - \left\lceil \frac{p_2 n}{b} \right\rceil - \left\lceil \frac{p_3 n}{c} \right\rceil.
\end{equation}
By \cite[Theorem 1.3]{caka}, we have $\tau(n+1) \ge \tau(n)$ for $n > N$, where
\[
N = abc - bc - ac - ab.
\]
Note that $N > 0$ except when $(a,b,c) = (2,3,5)$. We will exclude this case from the discussion unless stated otherwise. For the purpose of computing the Floer homology, it suffices to consider the $\tau$ function up until the point where it is increasing. So we only need the restricted $\tau$ function $\tau \colon [0,N+1] \to \mathbb{Z}$, or equivalently, it suffices to determine the $\Delta$-function $\Delta \colon [0,N] \to \mathbb{Z}$. Furthermore, the function $\Delta$ is completely determined on $[0,N]$ as follows \cite[Theorem 1.3]{caka}. Let
\[
G = \{ g \in \mathbb{Z}_{\ge 0} \; | \; g = bc i + ac j + ab k \text{ for some } i,j,k \in \mathbb{Z}_{\ge 0} \}
\]
be the additive semigroup generated by $bc, ac, ab$. We have that $\Delta(n) \in \{ -1 , 0 , 1 \}$ for all $n \in [0,N]$. Moreover, for any $n \in [0,N]$, we have that 
\[
\Delta(n) = \begin{cases} \; \; \, 1 & n \in G, \\ -1 & N-n \in G, \\ \; \; \, 0 & \text{otherwise}. \end{cases}
\]
It follow that for $n \in [0,N]$, we have
\begin{equation}\label{equ:taudiff}
\tau(n+1) = | G \cap [0,n] | - | G \cap [N-n,N] |
\end{equation}
where $|S|$ denotes the cardinality of a finite set $S$. From this expression it is clear that $\tau$ satisfies $\tau(N+1-n) = \tau(n)$ for all $n \in [0,N]$.

\begin{remark}\label{rem:crt}
Note that since $0 < N < abc$, it follows from the Chinese remainder theorem that any $n \in G \cap [0,N]$ can be written as $abc( i/a + j/b + k/c)$ for {\em uniquely determined} integers $i,j,k$ satisfying $0 < i < a$, $0 < j < b$, $0 < k < c$.
\end{remark}

We briefly explain how to obtain $HF^+(-\Sigma(a,b,c))$ from the $\tau$ function. First we recall the definition of a graded root:
\begin{definition}[\cite{nem}]
Let $R$ be an infinite tree with vertices $\mathcal{V}$ and edges $\mathcal{E}$. Denote by $[u,v]$ the edge joining $u$ and $v$. Edges are unordered, so $[u,v] = [v,u]$. We say that $R$ is a {\em graded root} with grading $\chi \colon \mathcal{V} \to \mathbb{Z}$ if
\begin{itemize}
\item[(a)]{$\chi(u) - \chi(v) = \pm 1$ for all $[u,v] \in \mathcal{E}$,}
\item[(b)]{$\chi(u) > \min \{ \chi(v) , \chi(w) \}$ if $[u,v], [u,w] \in \mathcal{E}$ and $v \neq w$,}
\item[(c)]{$\chi$ is bounded from below, $\chi^{-1}(k)$ is finite for any $k \in \mathbb{Z}$ and $|\chi^{-1}(k)| = 1$ for all sufficiently large $k$.}
\end{itemize}

\end{definition}

Define a partial relation $\succeq$ on $(R , \chi)$ by declaring $v \succeq w$ if there exists a sequence of vertices $w = w_0 , w_1 , \dots , w_k = v$ such that $[w_{i-1} , w_i] \in \mathcal{E}$ and $\chi(w_i) - \chi(w_{i-1}) = 1$ for $i = 1, \dots , k$. From the axioms (a)-(c), it is easily seen that for each $u \in \mathcal{V}$ there is a unique vertex $v \in \mathcal{V}$ with $[u,v] \in \mathcal{E}$ and $\chi(v) = \chi(u)+1$. From this and axiom (c), it is seen that for any two vertices $u,v \in \mathcal{V}$ there is a unique $\succeq$-minimal element $w \in \mathcal{V}$ with $w \succeq v$ and $w \succeq u$. We denote this element by $\sup(u,v)$.

For $v \in \mathcal{V}$, let $\delta_v$ be the number of edges which have $v$ as an endpoint. Clearly $\delta_v \ge 1$ and $\delta_v = 1$ if and only if $v$ is $\succeq$-minimal.

Given a graded root $(R , \chi)$ define $\mathbb{H}(R,\chi)$ to be the graded $\mathbb{F}[U]$-module constructed as follows. A basis for $\mathbb{H}(R,\chi)$ is given by $\{ e_v \; | \; v \in \mathcal{V} \}$ where $deg(e_v) = 2 \chi(v)$ and $U e_v = e_u$, where $u$ is the unique $u \in \mathcal{V}$ such that $[u,v] \in \mathcal{E}$ and $\chi(u) = \chi(v) + 1$. Note that our definition of $\mathbb{H}(R,\chi)$ differs slightly from the one in \cite{nem}. This is because we are using Floer cohomology rather than homology. The underlying graded abelian group of $\mathbb{H}(R,\chi)$ is the same as that in \cite{nem}, but our $\mathbb{F}[U]$-module action is the transpose of the one in \cite{nem}.

For a positive integer $n$ let $\mathcal{T}^+(n)$ denote the $\mathbb{F}[U]$-module $\mathbb{F}[U]/(U^n)$. For any integer $d$ and any $\mathbb{F}[U]$-module $M$, let $M_d$ be the degree-shift of $M$ defined by $(M_d)^j = M^{j-d}$. In particular, for any positive integer $n$ and any integer $d$, we have the $\mathbb{F}[U]$-module $\mathcal{T}^+_d(n) = (\mathcal{T}^+(n))_d$. We refer to $\mathcal{T}^+_d(n)$ as a {\em tower of length} $n$. The lowest degree in $\mathcal{T}^+_d(n)$ is $d$ and the highest degree is $d+2n-2$.

\begin{proposition}[\cite{nem}, Proposition 3.5.2]\label{prop:hfiso}
Let $(R , \chi)$ be a graded root. Set $I = \{ v \in \mathcal{V} \; | \; \delta_v = 1\}$. We choose an ordering on the set $I$ as follows. The first element $v_1 \in I$ is chosen so that $\chi(v_1)$ is the minimal value of $\chi$. If $v_1, \dots , v_k \in I$ are already determined and $J = \{v_1 , \dots , v_k\} \neq I$, then $v_{k+1}$ is chosen from $I \setminus J$ such that $\chi(v_{k+1}) = \min_{v \in I \setminus J} \chi(v)$. Let $w_{k+1} \in \mathcal{V}$ be the unique $\succeq$-minimal vertex of $R$ which dominates both $v_{k+1}$ and some element of $J$. Then we have an isomorphism of $\mathbb{F}[U]$-modules:
\[
\mathbb{H}(R , \chi) \cong \mathbb{F}[U]_{2\chi(v_1)} \oplus \bigoplus_{k \ge 2} \mathcal{T}^+_{2\chi(v_k)}( \chi(w_k) - \chi(v_k) ).
\]
In particular we have an isomorphism
\[
\mathbb{H}_{red}(R , \chi) \cong \bigoplus_{k \ge 2} \mathcal{T}^+_{2\chi(v_k)}( \chi(w_k) - \chi(v_k) ).
\]
\end{proposition}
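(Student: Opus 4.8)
The statement is, up to our switch from homological to cohomological conventions, Proposition~3.5.2 of \cite{nem}, and the argument is purely formal; the plan is to reprove it so the paper is self-contained. First I would record two preliminary facts about the graded root $(R,\chi)$. Because $\chi$ is bounded below, has finite fibres, and $|\chi^{-1}(k)|=1$ for $k \gg 0$, the leaf set $I$ is finite: if $u_0$ is the vertex with $\chi(u_0)$ minimal among values where the fibre is a singleton, then the upward ray from any leaf passes through $u_0$, so all leaves lie in the finite set $\{w : w \preceq u_0\}$. Writing $I = \{v_1,\dots,v_s\}$ and noting that any vertex $u$ has some leaf $v \preceq u$ with $e_u = U^{\chi(u)-\chi(v)}e_v$, it follows that $\mathbb{H}(R,\chi)$ is finitely generated by $e_{v_1},\dots,e_{v_s}$, hence is a finitely generated graded module over the graded PID $\mathbb{F}[U]$.

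Next I would isolate the combinatorial core. For $1 \le k \le s$ and $a \ge 0$ let $u^{(k)}_a$ denote the vertex on the upward ray from $v_k$ with $\chi(u^{(k)}_a) = \chi(v_k)+a$, and set $d_1 = \infty$, $d_k = \chi(w_k)-\chi(v_k)$ for $k \ge 2$. The claim is that $\{e_{u^{(k)}_a} : 1 \le k \le s,\ 0 \le a < d_k\}$ is an $\mathbb{F}$-basis of $\mathbb{H}(R,\chi)$. For existence, given a vertex $u$ take the smallest index $k$ with $v_k \preceq u$ and put $a = \chi(u)-\chi(v_k)$; if $a \ge d_k$ then $u \succeq w_k \succeq v_j$ for some $j < k$, contradicting minimality of $k$, so $a < d_k$ and $u = u^{(k)}_a$. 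For uniqueness, if $u$ lies on the rays from both $v_k$ and $v_l$ with $k < l$, then $u \succeq \sup(v_l,v_k)$; since $\sup(v_l,v_k)$ dominates $v_l$ and the earlier leaf $v_k$, and $w_l$ is the $\succeq$-minimal vertex dominating $v_l$ and some earlier leaf, we get $\chi(w_l) \le \chi(\sup(v_l,v_k)) \le \chi(u) = \chi(v_l)+a' < \chi(v_l)+d_l = \chi(w_l)$, a contradiction.

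With these in hand I would filter: let $M_k \subseteq \mathbb{H}(R,\chi)$ be the $\mathbb{F}[U]$-submodule generated by $e_{v_1},\dots,e_{v_k}$, so $0 = M_0 \subset M_1 \subset \cdots \subset M_s = \mathbb{H}(R,\chi)$. Since $U^a e_{v_1} = e_{u^{(1)}_a} \neq 0$ for all $a$, $M_1 = \mathbb{F}[U]\,e_{v_1} \cong \mathbb{F}[U]_{2\chi(v_1)}$ is free of rank one. For $k \ge 2$, the basis claim shows $e_{u^{(k)}_0},\dots,e_{u^{(k)}_{d_k-1}}$ are linearly independent modulo $M_{k-1}$, while $U^{d_k}e_{v_k} = e_{w_k} \in M_{k-1}$ because $w_k \succeq v_j$ for some $j<k$; hence $M_k/M_{k-1} \cong \mathcal{T}^+_{2\chi(v_k)}(d_k)$. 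It then remains to split each extension $0 \to M_{k-1} \to M_k \to \mathcal{T}^+_{2\chi(v_k)}(d_k) \to 0$; inducting on $k$ gives $\mathbb{H}(R,\chi) \cong \mathbb{F}[U]_{2\chi(v_1)} \oplus \bigoplus_{k \ge 2}\mathcal{T}^+_{2\chi(v_k)}(d_k)$, and the ``in particular'' statement follows by discarding the free summand.

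The splitting step is the one place requiring care. Using the resolution $0 \to \mathbb{F}[U] \xrightarrow{U^{d_k}} \mathbb{F}[U] \to \mathcal{T}^+(d_k) \to 0$ one has $\mathrm{Ext}^1_{\mathbb{F}[U]}(\mathcal{T}^+(d_k), M_{k-1}) \cong M_{k-1}/U^{d_k}M_{k-1}$, and the class of our extension is represented by $U^{d_k}e_{v_k} = e_{w_k}$; so the sequence splits precisely when $e_{w_k} \in U^{d_k}M_{k-1}$. This is where the ordering of the leaves by weakly increasing $\chi$-value enters: writing $w_k = \sup(v_k,v_j)$ with $j < k$, one has $\chi(v_j) \le \chi(v_k)$, whence
\[
U^{d_k}e_{v_k} \;=\; e_{w_k} \;=\; U^{\chi(w_k)-\chi(v_j)}e_{v_j} \;=\; U^{d_k}\bigl(U^{\chi(v_k)-\chi(v_j)}e_{v_j}\bigr) \;\in\; U^{d_k}M_{k-1},
\]
so the obstruction vanishes and the induction goes through. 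I expect this inequality $\chi(v_j) \le \chi(v_k)$ (together with the routine bookkeeping in the combinatorial basis lemma) to be the only subtle point; the rest is formal homological algebra over $\mathbb{F}[U]$.
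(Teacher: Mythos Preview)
The paper does not supply its own proof of this proposition; it is quoted verbatim from N\'emethi \cite{nem} and used as a black box. Your argument is a correct, self-contained reconstruction: the combinatorial basis lemma (each vertex is uniquely of the form $u^{(k)}_a$ with $a<d_k$), the filtration by the $M_k$, and the explicit splitting via $e_{v_k}-U^{\chi(v_k)-\chi(v_j)}e_{v_j}$ all go through exactly as you describe, and the ordering hypothesis $\chi(v_j)\le\chi(v_k)$ is indeed precisely what makes the Ext obstruction vanish.
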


Let $\tau \colon \{0 , 1, \dots , l\} \to \mathbb{Z}$ be any function and suppose there is an $l$ such that $\tau(i+1) \ge \tau(i)$ for all $i \ge l$. Following \cite[Example 3.4]{nem}, we construct a graded root $(R,\chi)$ from $\tau$ as follows. Start with the vertex set $\widetilde{\mathcal{V}} = \{ v_i^k \}$ where $0 \le i \le l$ and $k \ge \tau(i)$, edge set $\widetilde{\mathcal{E}} = [v_i^k , v_i^{k+1}]$ and define $\widetilde{\chi}$ by $\widetilde{\chi}(v_i^k) = k$. Now define $\mathcal{V} = \widetilde{\mathcal{V}}/\! \! \sim$ and $\mathcal{E} = \widetilde{\mathcal{E}}/\! \! \sim$ where for $i \le j$ we set $v_i^k \sim v_j^k$ and $[v_i^k , v_i^{k+1}] \sim [v_j^k , v_j^{k+1}]$ if $k \ge \max_{i \le u \le j} \tau(u)$. We define $\chi$ to be the function on $\mathcal{V}$ induced by $\widetilde{\chi}$. It is easily checked that $(\mathcal{V} , \mathcal{E} , \chi)$ is a graded root. We call it the {\em graded root associated to $\tau$}.


Let $1 < a < b < c$ be pairwise coprime integers and let $\tau$ be the $\tau$-function defined as in Equations (\ref{equ:tau1})-(\ref{equ:tau2}). Let $(R_\tau , \chi_\tau)$ be the associated graded root. Then from \cite{nem} it follows that up to a grading shift $HF^+(-\Sigma(a,b,c))$ is isomorphic to $\mathbb{H}( R_\tau , \chi_\tau)$:
\[
HF^+(-\Sigma(a,b,c)) \cong \mathbb{H}( R_\tau , \chi_\tau)[ u ].
\]
The precise value of the grading shift $u$ can be computed \cite[Proposition 4.7]{nem}, but we will not need this.

Observe that $\tau(0) = \tau(N+1) = 0$ and that $\tau(1) = \tau(N) = 1$. We will see in Proposition \ref{prop:taumax} that $\tau(n) \le 1$ for all $n \in [0,N+1]$, hence $1$ and $N$ are global maxima of $\tau$. Let $n \in [0,N+1]$ be a global maximum of $\tau$, that is, $\tau(n) = 1$. We will say $n$ is a {\em trivial maximum} if we either have that $\tau(i) = 1$ for all $1 \le i \le n$ or $\tau(i) = 1$ for all $n \le i \le N$.

\begin{proposition}\label{prop:topdegtau}
Suppose that all global maxima of $\tau$ are trivial. Let $\ell^+(-\Sigma(a,b,c))$ denote the highest non-zero degree in $HF^+_{red}(-\Sigma(a,b,c))$. Any non-zero element in the image of $U \colon HF^+_{red}(-\Sigma(a,b,c)) \to HF_{red}^+(-\Sigma(a,b,c))$ has degree strictly less than $\ell^+(-\Sigma(a,b,c))$.
\end{proposition}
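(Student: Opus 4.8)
The plan is to read off the $\mathbb{F}[U]$-module structure of $HF^+_{red}(-\Sigma(a,b,c))$ from the graded root $(R_\tau,\chi_\tau)$ via Proposition \ref{prop:hfiso}, and to use the hypothesis that every global maximum of $\tau$ is trivial to pin down the top of $R_\tau$. Recall from Proposition \ref{prop:taumax} below that $\tau\le 1$ on $[0,N+1]$ with $\tau(0)=\tau(N+1)=0$ and $\tau(1)=\tau(N)=1$, so the global maximum value of $\tau$ is $1$. The first thing I would check is that the trivial-maxima hypothesis forces $\{n:\tau(n)=1\}$ to be a union $[1,n_0]\cup[n_1,N]$ of a (maximal) initial block and a (maximal) final block, with $\tau(n)\le 0$ for $n_0<n<n_1$; indeed a global maximum $n$ with $n_0<n<n_1$ could be neither of trivial type.

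Next I would describe $R_\tau$ near its top using the identification rule $v_i^k\sim v_j^k\iff k\ge\max_{i\le u\le j}\tau(u)$. Since $\tau\le 1$, at every level $k\ge 1$ all columns exist and are identified, so above level $1$ the graded root is a single infinite spine whose bottom vertex $w^\ast$ sits at level $1$; below $w^\ast$ it splits into the single level-$0$ vertex $v_0^0$ (from position $0$), the single level-$0$ vertex $v_{N+1}^0$ (from position $N+1$), and, when $n_0<n_1$, a subtree built from the columns $n_0<i<n_1$ (the ``dip''), every vertex of which lies at level $\le 0$. Thus $w^\ast$ is the unique merge vertex of $R_\tau$ at level $1$, every other merge vertex lies at level $\le 0$, and $w^\ast$ has at least the two branches $v_0^0,v_{N+1}^0$ below it, so it occurs as $w_k$ for some $k\ge 2$ in the ordered decomposition of Proposition \ref{prop:hfiso}.

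I would then conclude as follows. By Proposition \ref{prop:hfiso}, $HF^+_{red}(-\Sigma(a,b,c))\cong\bigoplus_{k\ge 2}\mathcal{T}^+_{2\chi(v_k)}(\chi(w_k)-\chi(v_k))[u]$, where the $k$-th summand is a tower of top degree $2\chi(w_k)-2+u$. Since no merge vertex sits above level $1$ and the value $1$ is attained, $\ell^+(-\Sigma(a,b,c))=u$, and the degree-$\ell^+$ part of $HF^+_{red}$ is the sum of the top pieces of the summands with $w_k=w^\ast$. For such a summand the leaf $v_k$ must sit at level $0$: either $v_k\in\{v_0^0,v_{N+1}^0\}$, or $v_k$ is a leaf of the dip subtree processed only after both of $v_0^0,v_{N+1}^0$ (otherwise $\sup(v_k,\cdot)$ would be a level-$\le 0$ merge vertex of the dip rather than $w^\ast$), and then $\chi(v_k)=0$ because $v_k$ lies at level $\le 0$ yet is processed no earlier than the level-$0$ vertex $v_0^0$. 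Hence each such summand is a length-$1$ tower $\mathcal{T}^+_u(1)$, on which $U$ acts as zero. Since $U$ preserves each summand and raises degree by $2$, a summand of top degree $2\chi(w_k)-2+u\le u-2<\ell^+$ contributes nothing to the image of $U$ in degree $\ell^+$, and the length-$1$ summands contribute nothing there either; therefore no nonzero degree-$\ell^+$ class lies in the image of $U$. (If $HF^+_{red}(-\Sigma(a,b,c))=0$ the statement is vacuous, and the $n_0\ge n_1$ case is subsumed in the above.)

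The hard part will be the structural analysis of $R_\tau$ in the second paragraph, and in particular the bookkeeping in the ordered decomposition of Proposition \ref{prop:hfiso}: one has to rule out that some deep leaf of the (possibly complicated) dip subtree attaches directly up to $w^\ast$ and thereby produces a long tower whose top degree equals $\ell^+$. This is exactly where the trivial-maxima hypothesis is used: if $\tau$ took the value $1$ at a non-trivial local maximum there would be a second level-$1$ merge vertex lying over a genuine dip, a deep leaf under it would give a tower of length $\ge 2$ with top degree $\ell^+$, and the conclusion would fail.
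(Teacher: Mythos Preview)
Your approach is essentially the same as the paper's: both read off the tower decomposition of $\mathbb{H}_{red}(R_\tau,\chi_\tau)$ via Proposition~\ref{prop:hfiso}, identify $w^\ast$ as the unique level-$1$ vertex, and show that every tower reaching the top degree has length~$1$. The structural picture you sketch (three branches below $w^\ast$: the two boundary leaves $v_0^0,\,v_{N+1}^0$ and the dip subtree rooted at a single level-$0$ vertex) is exactly right and is implicit in the paper's argument.

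The one place where your reasoning is imprecise is the parenthetical justifying ``processed only after both of $v_0^0,v_{N+1}^0$''. The correct statement is: if $v_k$ is a dip leaf with $w_k=w^\ast$, then \emph{every} prior $v_j$ must lie outside the dip subtree (since the $\sup$ of two dip leaves is a dip vertex at level $\le 0$), hence every prior $v_j$ is a boundary leaf. In particular $v_1$ is a boundary leaf, so $\min_\ell\chi(\ell)=\chi(v_1)=0$, whence $\chi(v_k)=0$ as you want. It is not true in general that \emph{both} boundary leaves precede $v_k$ (take $k=2$), but the conclusion survives. The paper sidesteps this bookkeeping entirely by \emph{choosing} the ordering in Proposition~\ref{prop:hfiso} so that $v_0^0$ and $v_{N+1}^0$ are processed last (valid because their $\chi$-value $0$ is maximal among all leaves); then all dip leaves come first, each $w_k$ for a dip leaf with $k\ge 2$ lies at level $\le 0$, and the two final towers are visibly $\mathcal{T}^+_0(1)$. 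Adopting that explicit choice would make your argument cleaner and remove the need to reason about arbitrary orderings.
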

\begin{proof}
Let $(R_\tau , \chi_\tau)$ be the graded root associated to $\tau$. From the definition of $(R_\tau , \chi_\tau)$, it is clear that $(R_\tau , \chi_\tau)$ depends only on the values of the local minima and local maxima of $\tau$. More precisely, let $S^{min}$ be the set of $m \in [0,N+1]$ such that $m=0$, $m = N+1$, or $\tau(m-1) > \tau(m)$ and there exists a $k \ge 0$ such that $\tau(i) = \tau(m)$ for $m \le i \le m+k$ and $\tau(m+k+1) > \tau(m)$. Similarly, let $S^{max}$ be the set of $M \in [1,N]$ such that $\tau(M) > \tau(M-1)$ and there exists a $k \ge 0$ such that $\tau(i) = \tau(M)$ for $M \le i \le M+k$ and $\tau(M+k+1) < \tau(M)$. Write the elements of $S^{min}$ and $S^{max}$ in increasing order as $S^{min} = \{ m_0 , m_1 , \dots , m_r \}$, $S^{max} = \{ M_1 , M_2 , \dots , M_r \}$ where $0 = m_0 < m_1 < \cdots < m_r = N+1$ and $1 = M_1 < M_2 < \cdots < M_r$. The local minima and maxima must occur in alternating order, so we have
\[
0 = m_ 0 < M_1 < m_1 < M_2 < \cdots < m_{r-1} < M_r < m_r = N+1.
\]
Furthermore, $M_1$ and $M_r$ correspond to the trivial global maxima. By assumption these are the only global maxima. Hence $\tau(M_j) \le 0$ for $1 < j < r$.

Recall that $(R_\tau , \chi_\tau)$ is constructed as follows. Start with $\widetilde{\mathcal{V}} = \{ v_i^k \}$ where $0 \le i \le N+1$ and $k \ge \tau(i)$, edge set $\widetilde{\mathcal{E}} = [v_i^k , v_i^{k+1}]$ and define $\widetilde{\chi}$ by $\widetilde{\chi}(v_i^k) = k$. Then $(R_\tau , \chi_\tau)$ is obtained by taking the quotient of this by the equivalence relation $\sim$, where for $i \le j$ we set $v_i^k \sim v_j^k$ and $[v_i^k , v_i^{k+1}] \sim [v_j^k , v_j^{k+1}]$ if $k \ge \max_{i \le u \le j} \tau(u)$.

It is easily seen that the minima of $\succeq$ on $(R_\tau , \chi_\tau)$ are precisely the elements $I = \{ v_{m_i}^{\tau(m_i)} \}_{0 \le i \le r}$. From Proposition \ref{prop:taumax} we have that $\tau(n) \le 1$ for all $n \in [0,N+1]$ and hence $\tau(m_i) \le 0$ for any local minimum. Hence we may choose a permutation $\sigma \colon \{ 0 , 1 , \dots , r \} \to \{ 0 , 1 , \dots , r\}$ such that $\tau( m_{\sigma(i)} ) \le \tau( m_{\sigma(i+1)})$ for $0 \le i \le r-1$ and $\sigma(r-1) = 0$, $\sigma(r) = r$. This gives an ordering of the set $I$ as in the statement of Proposition \ref{prop:hfiso}, namely $I = \{ v'_0 , v'_1 , v'_2 , \dots , v'_r \}$ where $v'_i = v_{m_{\sigma(i)}}^{\tau(m_{\sigma(i)})}$. For each $i \in \{1 , \dots , r\}$ and each $j \in \{0 , 1 , \dots , i-1\}$ one finds that $sup(v'_i , v'_j) = v_{m_\sigma(i)}^{K_{ij}}$, where $K_{ij}$ is the maximum of $\tau(M_a)$ for all $a$ such that $M_a$ lies in the interval joining $m_{\sigma(i)}$ and $m_{\sigma(j)}$. Now let $w_1 , w_2 , \dots , w_r$ be defined as in the statement of Proposition \ref{prop:hfiso}. Then it follows that $w_i = v_{m_{\sigma(i)}}^{K_i}$, where $K_i = \min_{0 \le j \le i-i} K_{ij}$. Now suppose that $i \neq r-1, r$. Then for all $j \in \{ 0 , \dots , i-1\}$ we have that $\sigma(i) \neq 0,r$ and $\sigma(j) \neq 0,r$. It follows that each maximum $M_a$ in the interval joining $m_{\sigma(i)}$ and $m_{\sigma(j)}$ is not trivial and hence $\tau(M_a) \le 0$. Hence $K_{ij} \le 0$ for all $i \le r-2$ and $j < i$. This also implies that $\chi(w_i) = K_{i} \le 0$ for all $i \le r-2$. On the other hand since $\tau(M_1) = \tau(M_{r}) = 1$, it follows that $K_{r-1} = K_r = 1$ and thus $\chi(w_{r-1}) = \chi(w_r) = 1$. 

From Proposition \ref{prop:hfiso}, we have an isomorphism
\[
\mathbb{H}_{red}(R_\tau , \chi_\tau) \cong \bigoplus_{k = 1}^{r} \mathcal{T}^+_{2\chi(v'_k)}( \chi(w_k) - \chi(v'_k) ).
\]
The highest degree in the tower $\mathcal{T}^+_{2\chi(v'_k)}( \chi(w_k) - \chi(v'_k) )$ is $2\chi(w_k) - 2$. For $k \neq r-1, r$ we have $2\chi(w_k)-2 \le -2$ whereas for $k = r-1,r$ we have $2\chi(w_k)-2 = 0$. Thus the highest non-zero degree in $\mathbb{H}_{red}(R_\tau , \chi_\tau)$ is $0$ and only the towers for $k = r-1 , r$ attain this degree. Moreover the length of the tower $\mathcal{T}^+_{2\chi(v'_k)}( \chi(w_k) - \chi(v'_k) )$ is $\chi(w_k) - \chi(v'_k)$ which for $k=r-1 , r$ equals $1$, since $v'_{r-1} = v_{\sigma(r-1)}^{\tau(m_{\sigma(r-1)})} = v_0^{\tau(0)} = v_0^0$ and $v'_r = v_{\sigma(r)}^{\tau(m_{\sigma(r)})} = v_r^{\tau(m_r)} = v_r^0$, so $\chi(v'_{r-1}) = \chi(v'_{r}) = 0$. It follows that any non-zero element in the image of $U \colon \mathbb{H}_{red}(R_\tau , \chi_\tau) \to \mathbb{H}_{red}(R_\tau , \chi_\tau)$ has degree strictly less than $0$. Now since $HF_{red}^+(-\Sigma(a,b,c))$ is isomorphic to a grading shift of $\mathbb{H}_{red}(R_\tau , \chi_\tau)$, it also follows that any non-zero element in the image of $U \colon HF^+_{red}(-\Sigma(a,b,c)) \to HF_{red}^+(-\Sigma(a,b,c))$ has degree strictly less than $\ell^+(-\Sigma(a,b,c))$.
\end{proof}

\begin{proposition}\label{prop:taumax}
Let $1 < a < b < c$ be pairwise coprime integers and assume that $(a,b,c) \neq (2,3,5)$. Then $\tau(n+1) \le 1$ for all $n \in [0,N]$. Furthermore, all maxima of $\tau$ are trivial, except in the following cases: $(a,b,c) = (2,3,6n-1)$, $n \ge 2$ or $(a,b,c) = (2,3,6n+1)$, $n \ge 1$.
\end{proposition}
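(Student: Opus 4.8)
The plan is to translate everything into a statement about the numerical semigroup $G=\langle bc,ca,ab\rangle$ and then do combinatorics. Write $h(x)=|G\cap[0,x]|$. Recalling (from \cite{caka}, as stated above) that on $[0,N]$ we have $\Delta(n)=1$ if $n\in G$, $\Delta(n)=-1$ if $N-n\in G$, and $\Delta(n)=0$ otherwise — the first two alternatives being mutually exclusive, since $m\in G$ and $N-m\in G$ would give $N\in G$, whereas $N\notin G$ because $G$ is symmetric with Frobenius number $F=2abc-ab-bc-ca$ and $F-N=abc\in G$ — summation gives
\[
\tau(m)\;=\;h(m-1)-|G\cap[N-m+1,N]|\;=\;h(m-1)+h(N-m)-h(N).
\]
Together with $\tau(N+1-m)=\tau(m)$ and the boundary values, this shows that the first claim, $\tau\le 1$, is equivalent to the density inequality $|G\cap[0,k]|\le|G\cap[N-k,N]|+1$ for $0\le k\le N$, and the second claim refines it.

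For part (A) I would establish that inequality from the symmetry $x\in G\iff F-x\notin G$ (with $F=abc+N$) together with the Chinese remainder normal form of $G\cap[0,N]$ (Remark \ref{rem:crt}): each $m\in[0,N]$ has a unique representative $(i_0,j_0,k_0)\in[0,a)\times[0,b)\times[0,c)$ modulo $abc$ in terms of the generators, and $m\in G$ exactly when the corresponding value $bci_0+caj_0+abk_0$ equals $m$; the box involution $(i_0,j_0,k_0)\mapsto(a-1-i_0,\,b-1-j_0,\,c-1-k_0)$ replaces that value by $2abc+N$ minus itself, and tracking its effect on the three value-classes matches each nonzero element of $G\cap[0,k]$ with a distinct element of $G\cap[N-k,N]$ — the one unit of slack being exactly the maximum $\tau(1)=1$. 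Carrying out this matching and checking it really lands in the right window for every $k$ is the technical heart of the argument, and the step I expect to be hardest.

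For part (B), first note that since $\tau\le 1$ and $\tau$ is symmetric about $(N+1)/2$, if $m^\ast$ is the largest index with $\tau\equiv 1$ on $[1,m^\ast]$ then $\tau(m^\ast+1)=0$, $\tau\equiv 1$ on $[N+1-m^\ast,N]$, and $m^\ast=\min(ab,\,N-g^\ast)$ with $g^\ast=\max(G\cap[0,N])$; a global maximum fails to be trivial exactly when $\tau$ returns to $1$ at some $n$ strictly between $m^\ast$ and $N+1-m^\ast$. When $a=2$, $b=3$ this is completely explicit: $c\ge 7$ forces $2c,3c>N=c-6$, so $G\cap[0,N]$ is precisely the set of multiples of $6$ in $[0,N]$, giving $\Delta=+1$ on those, $\Delta=-1$ on $\{N,N-6,N-12,\dots\}\cap[0,N]$ and $\Delta=0$ elsewhere; with $c=6n\mp 1$ one has $N\equiv\mp 1\pmod{6}$, so each block of six consecutive integers carries one $+1$ and one $-1$, $\tau$ takes only the values $0$ and $1$ in a $6$-periodic pattern, and one reads off that an interior return to $1$ occurs precisely for the larger members of the families $(2,3,6n-1)$ and $(2,3,6n+1)$ — the stated ranges $n\ge 2$, $n\ge 1$, together with $(2,3,5)$ (where $N<0$), being a clean superset of the triples that actually fail.

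When $(a,b)\ne(2,3)$ one has $ab\ge 10$, and the plan is to rerun the count of part (A) in the open interval $(m^\ast,N+1-m^\ast)$ but now with slack to spare: there $[N-n+1,N]$ contains strictly more elements of $G$ than $[0,n-1]$ does, because the small elements of $G$ (multiples of $ab$ up to $ac$, and then sparse combinations involving $ac,bc$) are separated by gaps of length $\ge ab-1\ge 9$, whereas near $N$ the semigroup has already filled in by the symmetry $x\in G\iff F-x\notin G$; hence $\tau\le 0$ on that interval and all maxima are trivial. When $ab=6$ these gaps are too short, the inequality degenerates to an equality over long stretches, and $\tau$ oscillates back up to $1$ — which is exactly the mechanism behind the $(2,3,c)$ exceptions and the reason the hypothesis $(a,b)\ne(2,3)$ is needed. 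Thus the single recurring obstacle is the combinatorial one: bounding $h(k)=|G\cap[0,k]|$ against $|G\cap[N-k,N]|$ with exactly one unit of slack and locating where that slack disappears, for which the symmetric-semigroup symmetry and the normal form of Remark \ref{rem:crt} are the essential tools.
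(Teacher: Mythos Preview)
Your reduction to the counting inequality $|G\cap[0,k]|\le|G\cap[N-k,N]|+1$ is correct and matches the paper's setup, and your treatment of the $(2,3,c)$ family is essentially what the paper does. For part (A), though, you are working harder than necessary: the paper simply lets $\alpha_k$ be the largest element of $G$ not exceeding $N-k$ and observes that $j\mapsto j+\alpha_k$ injects $(G\cap[0,k])\setminus\{0\}$ into $G\cap[N-k,N]$ (any $j+\alpha_k<N-k$ with $j>0$ would contradict maximality of $\alpha_k$). This one-line translation replaces your box-involution matching, which you yourself flag as the unfinished hard step.

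The genuine gap is part (B) for $(a,b)\ne(2,3)$. Your density heuristic (gaps of length $\ge ab-1$ near $0$ versus ``filling in'' near $N$) is not sharp enough to close the argument: the families $(2,5,c)$ and $(3,4,c)$ have $ab\ge 10$ and still require individual verification in the paper. What actually drives the paper's argument is that at a putative nontrivial maximum $n\ge(N+1)/2$ with $\tau(n)=1$ and $\tau(n+1)=0$ one has $\Delta(n)=-1$, so $N-n\in G$, so $\alpha_n=N-n$ and the translation map above becomes a \emph{bijection} $G\cap[0,n]\to G\cap[N-n,N]$. Writing $N-n=bc\,i_0+ca\,j_0+ab\,k_0$ in normal form, one then \emph{manufactures} an element of $G\cap[N-n,\infty)$ whose normal form has a coordinate strictly below $(i_0,j_0,k_0)$ --- e.g.\ if $i_0>0$, take $(N-n)+abc(-1/a+k/c)$ with $1/a\le k/c\le 1/a+1/c$. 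Such an element cannot lie in the image of the bijection, hence must exceed $N$, which forces $n<ab$ and thus triviality. The residual case $i_0=j_0=0$ yields by the same device the explicit inequality $1/a+3/b-1/c>1$, which isolates exactly the three families $(2,3,c)$, $(2,5,c)$, $(3,4,c)$; the last two are then handled by direct computation of $\tau$. Your plan contains no analogue of this ``break the bijection'' construction, and without it I do not see how the gap heuristic pins down a finite list of exceptions to check by hand.
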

\begin{proof}
For $n \in [0,N]$, let $\alpha_n$ be the largest element of $G$ less than or equal to $N-n$. Suppose that $j \in G \cap [0,n] \setminus \{0\}$. We claim that $j + \alpha_n \in G \cap [N-n,N]$. Clearly $j + \alpha_n \in G$ because $j, \alpha_n \in G$. Since $j \le n$ and $\alpha_n \le N-n$, we also have $j + \alpha_n \le N$. Lastly, since $j > 0$, we have $j + \alpha_n \ge N-n$ for if not then $\alpha_n$ is not the largest element of $G$ less than or equal to $N-n$. Therefore $j + \alpha_n \in G \cap [N-n,N]$. So we have constructed an injective map $\phi_n : G \cap [0,n] \setminus \{0\} \to G \cap [N-n,N]$, given by $\phi(j) = j + \alpha_n$. Therefore $| G \cap [0,n] | \le 1 + |G \cap [N-n,N]$. Comparing with Equation (\ref{equ:taudiff}), we have shown that $\tau(n) \le 1$.

Now suppose that $n_0 \in [1,N]$ is a maximum of $\tau$, so $\tau(n_0) = 1$. By the symmetry $\tau(N+1+i) = \tau(i)$ of the $\tau$ function, it suffices to consider only maxima $n_0$ such that $n_0 \ge (N+1)/2$. Since $\tau$ can only change by $\pm 1$, there exists an $n \ge n_0$ such that $\tau(i) = 1$ for $n_0 \le i \le n$ and $\tau(n+1) = 0$. Since $\tau(N+1) = 0$, we see that $n \le N$. Also we have $n \ge n_0 \ge (N+1)/2$.

Since $\tau(n) =1$ and $\tau(n+1) = 0$, we have that $\Delta(n) = \tau(n+1) - \tau(n) = -1$. Hence $N - n \in G$ and $n \notin G$. Since $N-n \in G$, it follows that $\alpha_n = N-n$. Consider the map $\phi \colon G \cap [0,n] \to G \cap [N-n,N]$ given by $\phi(j) = j + N-n$ (we already saw that $\phi(j) \in G \cap [N-n ,N]$ for $j > 0$ and we also have $\phi(0) = N-n \in G \cap [N-n,N]$). Since $\tau(n+1) = 0$, it follows that $\phi$ is a bijection. Thus every $g \in G \cap [N-n,N]$ can be written uniquely as $g = (N-n) + g'$ for some $g \in G \cap [0,n]$. Since $N-n \in G$, we may write it in the form
\[
N-n = abc \left( \frac{i_0}{a} + \frac{j_0}{b} + \frac{k_0}{c} \right)
\]
for some $i_0,j_0,k_0 \ge 0$. It follows that any $g \in G \cap [N-n,N]$ has the form
\begin{equation}\label{equ:g}
g = abc \left( \frac{i}{a} + \frac{j}{b} + \frac{k}{c} \right)
\end{equation}
where $i \ge i_0$, $j \ge j_0$, $k \ge k_0$. Furthermore, by Remark \ref{rem:crt}, this is a necessary condition. That is, if $g \in G \cap [N-n,N]$ is written as in (\ref{equ:g}) then we must have $i \ge i_0$, $j \ge j_0$, $k \ge k_0$.

Suppose that $i_0 > 0$. There exists an integer $k$ such that $1/a$ lies in the interval $[(k-1)/c , k/c]$. Thus
\[
\frac{1}{a} \le \frac{k}{c} \le \frac{1}{a} + \frac{1}{c}.
\]
Consider
\[
g = (N-n) + abc\left( -\frac{1}{a} + \frac{k}{c} \right) = abc\left( \frac{i_0-1}{a} + \frac{j_0}{b} + \frac{k_0+k}{c} \right).
\]
Then $g \in G$ and $g \ge N-n$ since $k/c - 1/a \ge 0$. But $g$ is not of the form $abc(i'/a + j'/b + k'/c)$ with $i' \ge i_0$, $j' \ge j_0$, $k' \ge k_0$, hence $g \notin G \cap [N-n,N]$. The only way this can happen is that $g > N$. Thus
\[
g = (N-n) + abc\left( -\frac{1}{a} + \frac{k}{c} \right) > N,
\]
which implies that $n < abc( -1/a + k/c )$. But $k/c \le 1/a + 1/c$, so $n < abc/c = ab$. Note that since $a<b<c$, $ab$ is the smallest positive element of $G$. Thus $i \notin G$ for all $1 \le i \le n$. So $\Delta(i) \le 0$ for $1 \le i \le n$. But we also have that
\[
\tau(n) = 1 = \sum_{i=0}^{n-1} \Delta(i) = 1 + \sum_{i=1}^{n-1} \Delta(i).
\]
Since $\Delta(i) \le 0$ for $1 \le i \le n$, the only way we can have equality is that $\Delta(i) = 0$ for $1 \le i \le n$ and hence $\tau(i) = 1$ for $1 \le i \le n$. This means that $n$ is a trivial maximum.

Next, suppose that $i_0 = 0$ and $j_0 > 0$. Then by a similar argument to the $i_0 > 0$ case, there exists a $k$ such that $1/b \le k/c \le 1/b + 1/c$. Consider
\[
g = (N-n) + abc\left( -\frac{1}{b} + \frac{k}{c} \right).
\]
Arguing as in the $i_0 > 0$ case, we see that $n$ is again a trivial maximum.

Now consider the case that $i_0 = j_0 = 0$. Hence $N-n = abc(k_0/c)$ for some $k_0 \ge 0$. We will assume that $k_0 > 0$ for if $k_0 = 0$, then $n = N$ is a trivial maximum. Furthermore, recall that we are assuming $n \ge (N+1)/2 > N/2$. Therefore
\begin{equation}\label{equ:ink0}
\frac{k_0}{c} < \frac{N}{2abc} = \frac{1}{2} - \frac{1}{2a} - \frac{1}{2b} - \frac{1}{2c}.
\end{equation}
Recall that each element of $G \cap [N-n,N]$ has the form given by (\ref{equ:g}) with $i \ge 0$, $j \ge 0$, $k \ge k_0$. It follows that there does not exist a solution to
\begin{equation}\label{equ:jb}
\frac{k_0}{c} \le \frac{j}{b} + \frac{k_0-1}{c} \le 1 - \frac{1}{a} - \frac{1}{b} -\frac{1}{c}
\end{equation}
with $j \ge 0$. For if such a $j$ exists, we would have
\[
N-n = abc \left( \frac{k_0}{c} \right) < abc\left( \frac{j}{b} + \frac{k_0-1}{c} \right) \le N
\]
and thus $g = abc( j/b + (k_0-1)/c )$ would be an element of $G \cap [N-n,N]$ not of the form $abc(i'/a + j'/b + k'/c)$ with $i' \ge i_0$, $j' \ge j_0$, $k' \ge k_0$. In particular, $j=1$ is not a solution. But since $b < c$, we have $1/b + (k_0-1)/c > k_0/c$. So it must be the second inequality in (\ref{equ:jb}) that is violated. That is, we must have
\[
\frac{j}{b} + \frac{k_0-1}{c} > 1 - \frac{1}{a} - \frac{1}{b} - \frac{1}{c}.
\]
Rearranging and using (\ref{equ:ink0}), we find that
\begin{equation}\label{equ:cond1}
\frac{1}{a} + \frac{3}{b} - \frac{1}{c} > 1.
\end{equation}
If this condition is not satisfied, then all maxima of $\tau$ are trivial.

If $a \ge 4$, then $b \ge 5$ and
\[
\frac{1}{a} + \frac{3}{b} - \frac{1}{c} < \frac{1}{4} + \frac{3}{5} < 1.
\]
So (\ref{equ:cond1}) implies $a < 4$, hence $a = 2$ or $3$.

If $a=3$, then since $1/3 +3/5 < 1$, (\ref{equ:cond1}) implies that $b \le 5$. So $b = 4$ and then (\ref{equ:cond1}) implies that $c > 12$.

If $a=2$, then since $1/2 + 3/6 - 1/c < 1$, we must have $b < 6$. Hence $b = 3$ or $5$.

If $a=2, b=5$, then (\ref{equ:cond1}) is satisfied for any $c > 10$.

If $a=2, b=3$, then (\ref{equ:cond1}) is satisfied for any $c > 3$. But we are excluding $(2,3,5)$ so $c > 5$.

To summarise, (\ref{equ:cond1}) is satisfied only in the following cases:
\begin{itemize}
\item[(1)]{$(a,b,c) = (3,4,c)$, $c > 12$.}
\item[(2)]{$(a,b,c) = (2,5,c)$, $c > 10$.}
\item[(3)]{$(a,b,c) = (2,3,c)$, $c > 5$.}
\end{itemize}

It remains to show in cases (1) and (2) we still have that all maxima of $\tau$ are trivial. We do this by a direct computation of the $\tau$ function.

In case (1) there are four subcases:
\begin{itemize}
\item[(1a)]{$(a,b,c) = (3,4,12k+1)$, $k \ge 1$.}
\item[(1b)]{$(a,b,c) = (3,4,12k+5)$, $k \ge 1$.}
\item[(1c)]{$(a,b,c) = (3,4,12k+7)$, $k \ge 1$.}
\item[(1d)]{$(a,b,c) = (3,4,12k+11)$, $k \ge 1$.}
\end{itemize}

Similarly in case (2) there are four subcases:
\begin{itemize}
\item[(2a)]{$(a,b,c) = (2,5,10k+1)$, $k \ge 1$.}
\item[(2b)]{$(a,b,c) = (2,5,10k+3)$, $k \ge 1$.}
\item[(2c)]{$(a,b,c) = (2,5,10k+7)$, $k \ge 1$.}
\item[(2d)]{$(a,b,c) = (2,5,10k+9)$, $k \ge 1$.}
\end{itemize}

Case (1a): $N = 60k-7$, $G$ is generated by $\{ 12 , 36k+3 , 48k+4 \}$. By the symmetry of the $\tau$ function, it suffices to only look for maxima of $\tau(n+1)$ with $n \in [0,N/2]$. We consider the intersection of $G$ with $[0 , N/2]$. Since $36k+3 > N/2$, all elements of $G \cap [0,N]$ have the form $12j$ for some $j$. Thus
\[
G \cap [0,N/2]  = \{ 12j \}_{0 \le j  \le \lfloor N/24 \rfloor}.
\]
Now we consider the intersection of $N-G$ with $[0,N]$. Elements of $N-G$ in the range $[0,N]$ have the form $N - 12u$, $N - (36k+3)-12u$ or $N - (48k+4)-12u$ for some $u \ge 0$. In the first case, we have
\[
N - 12u = 12(5k-1-u)+5.
\]
In the second case, we have
\[
N -(36k+3)-12u = 12(2k-1-u)+2
\]
and in the third case, we have
\[
N - (48+4) - 12u = 12(k-1-u)+1.
\]
Therefore
\[
(N-G) \cap [0,N] = \{ 12j + 5\}_{0 \le j  \le 5k-1} \cup \{ 12j+2\}_{0 \le j \le 2k-1} \cup \{ 12j+1 \}_{0 \le j \le k-1}.
\]

Partition $[0,N]$ into subintervals $I_j = [12j , 12j+11]$, $0 \le j \le 5k-2$ and $I_{5k-1} = [60k-12 , 60k-7]$. We are only interested in the subintervals which intersect with $[0,N/2]$, so we can assume $j \le \lfloor N/24 \rfloor$. Under this condition, we have $G \cap I_j \cap G = \{12j\}$ and
\[
(N-G) \cap I_j = \begin{cases} 12j+1, 12j+2, 12j+5 & 0 \le j \le k-1, \\ 12j+2, 12j+5 & k \le j \le 2k-1, \\ 12j+5 & 2k \le j. \end{cases}
\]
It follows easily that $\tau$ has only trivial maxima.

Case (1b): $N = 60k+13$, $G$ is generated by $12, 36k+15, 48k+20$. Arguing similarly to case (1a), we find
\[
G \cap [0,N/2]  = \{ 12j \}_{0 \le j  \le \lfloor N/24 \rfloor}
\]
and
\[
(N-G) \cap [0,N] = \{ 12j + 1\}_{0 \le j  \le 5k+1} \cup \{ 12j+10\}_{0 \le j \le 2k-1} \cup \{ 12j+5 \}_{0 \le j \le k-1}.
\]
By a similar argument, we see that $\tau$ has only trivial maxima.

Case (1c): $N = 60k+23$, $G$ is generated by $12, 36k+21, 48k+28$. We find
\[
G \cap [0,N/2]  = \{ 12j \}_{0 \le j  \le \lfloor N/24 \rfloor}
\]
and
\[
(N-G) \cap [0,N] = \{ 12j + 11\}_{0 \le j  \le 5k+1} \cup \{ 12j+2\}_{0 \le j \le 2k} \cup \{ 12j+7 \}_{0 \le j \le k-1}.
\]
We then see that $\tau$ has only trivial maxima.

Case (1d): $N = 60k+43$, $G$ is generated by $12, 36k+33, 48k+44$. We find
\[
G \cap [0,N/2]  = \{ 12j \}_{0 \le j  \le \lfloor N/24 \rfloor}
\]
and
\[
(N-G) \cap [0,N] = \{ 12j + 7\}_{0 \le j  \le 5k+3} \cup \{ 12j+10\}_{0 \le j \le 2k} \cup \{ 12j+11 \}_{0 \le j \le k-1}.
\]
We then see that $\tau$ has only trivial maxima.

Case (2a): $N = 30k-7$, $G$ is generated by $10, 20k+2, 50k+5$. We find
\[
G \cap [0,N/2]  = \{ 10j \}_{0 \le j  \le \lfloor N/20 \rfloor}
\]
and
\[
(N-G) \cap [0,N] = \{ 10j + 3\}_{0 \le j  \le 3k-1} \cup \{ 10j+1\}_{0 \le j \le k-1}
\]
We then see that $\tau$ has only trivial maxima.

Case (2b): $N = 30k-1$, $G$ is generated by $10, 20k+6, 50k+15$. We find
\[
G \cap [0,N/2]  = \{ 10j \}_{0 \le j  \le \lfloor N/20 \rfloor}
\]
and
\[
(N-G) \cap [0,N] = \{ 10j + 9\}_{0 \le j  \le 3k-1} \cup \{ 10j+3\}_{0 \le j \le k-1}
\]
We then see that $\tau$ has only trivial maxima.

Case (2c): $N = 30k+11$, $G$ is generated by $10, 20k+14, 50k+35$. We find
\[
G \cap [0,N/2]  = \{ 10j \}_{0 \le j  \le \lfloor N/20 \rfloor}
\]
and
\[
(N-G) \cap [0,N] = \{ 10j + 1\}_{0 \le j  \le 3k+1} \cup \{ 10j+7\}_{0 \le j \le k-1}
\]
We then see that $\tau$ has only trivial maxima.

Case (2d): $N = 30k+17$, $G$ is generated by $10, 20k+18, 50k+45$. We find
\[
G \cap [0,N/2]  = \{ 10j \}_{0 \le j  \le \lfloor N/20 \rfloor}
\]
and
\[
(N-G) \cap [0,N] = \{ 10j + 7\}_{0 \le j  \le 3k+1} \cup \{ 10j+9\}_{0 \le j \le k-1}
\]
We then see that $\tau$ has only trivial maxima.

\end{proof}

\begin{proposition}\label{prop:topdeg}
Let $1 < a < b < c$ be pairwise coprime integers and assume $(a,b,c) \neq (2,3,5)$. Let $Y = -\Sigma(a,b,c)$ and let $\ell^+(Y)$ denote the highest non-zero degree in $HF^+_{red}(Y)$. Then $\ell^+(Y) = 2\delta(Y) - 2\min\{\tau\}$. Moreover, any non-zero element in the image of $U \colon HF_{red}^+(Y) \to HF_{red}^+(Y)$ has degree strictly less than $\ell^+(Y)$.
\end{proposition}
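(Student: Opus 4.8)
The plan is to deduce both assertions directly from the graded root $(R_\tau,\chi_\tau)$ of $Y=-\Sigma(a,b,c)$ via Proposition~\ref{prop:hfiso}, using the bound $\tau\le 1$ from Proposition~\ref{prop:taumax}, and then to dispose of the finitely many families in which $\tau$ may possess a non-trivial maximum by an explicit computation of $\tau$. Everything of substance has already been done in Propositions~\ref{prop:hfiso}, \ref{prop:topdegtau} and~\ref{prop:taumax}; the proof is essentially an assembly of these.

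First I would establish the degree formula. Recall $HF^+(Y)\cong\mathbb{H}(R_\tau,\chi_\tau)[u]$ for a grading shift $u$. The global minimum of $\tau$ is attained at a local minimum, hence at a leaf of $R_\tau$, so $\min\chi_\tau=\min\{\tau\}$; by Proposition~\ref{prop:hfiso} the bottom of the $\mathbb{F}[U]$-tower of $\mathbb{H}(R_\tau,\chi_\tau)$ then sits in degree $2\min\{\tau\}$, and comparing with $HF^+(Y)\cong\mathbb{F}[U]_{d(Y)}\oplus HF^+_{red}(Y)$ gives $u=d(Y)-2\min\{\tau\}=2\delta(Y)-2\min\{\tau\}$. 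For the top of the reduced part, Proposition~\ref{prop:hfiso} shows that the highest degree of $\mathbb{H}_{red}(R_\tau,\chi_\tau)$ is $2h^\ast-2$, where $h^\ast$ is the largest value of $\chi_\tau$ on a branching vertex of $R_\tau$. A vertex of $R_\tau$ at height $h$ is a connected component of $\{\,i\in[0,N+1] : \tau(i)\le h\,\}$; since $0\in G$ forces $\Delta(0)=1$ and hence $\tau(1)=1$, while $\tau(0)=\tau(N+1)=0$, there are at least two components of height $0$ (here $(a,b,c)\neq(2,3,5)$ guarantees $N\ge 1$), whereas $\tau\le 1$ gives a single component of height $1$. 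Thus the branch through the position $0$ merges into the rest of $R_\tau$ precisely at height $1$, so $h^\ast=1$, the top of $\mathbb{H}_{red}(R_\tau,\chi_\tau)$ is $0$ (in particular $HF^+_{red}(Y)\neq 0$), and shifting by $u$ yields $\ell^+(Y)=u=2\delta(Y)-2\min\{\tau\}$.

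For the statement about $\mathrm{im}(U)$, if every global maximum of $\tau$ is trivial this is exactly Proposition~\ref{prop:topdegtau}. By Proposition~\ref{prop:taumax} the only remaining cases are $(a,b,c)=(2,3,6n-1)$ with $n\ge 2$ and $(a,b,c)=(2,3,6n+1)$ with $n\ge 1$, which together are precisely $(a,b,c)=(2,3,c)$ with $c>5$ coprime to $6$. For such a triple $ab=6$ is the smallest positive element of $G$ and $2c,3c>N=c-6$, so $G\cap[0,N]$ is exactly the set of multiples of $6$ in $[0,N]$; substituting this into~(\ref{equ:taudiff}) gives, for $1\le m\le N$,
\[
\tau(m)=\left\lfloor\tfrac{m-1}{6}\right\rfloor+\left\lfloor\tfrac{N-m}{6}\right\rfloor+1-\left\lfloor\tfrac{N}{6}\right\rfloor .
\]
Since $N=c-6\not\equiv 0\pmod 6$, the elementary inequality $\lfloor x/6\rfloor+\lfloor y/6\rfloor\ge\lfloor(x+y)/6\rfloor-1$ shows $\tau(m)\ge 0$, and the bound $\tau\le 1$ from Proposition~\ref{prop:taumax} applies here as well; hence $\tau$ is $\{0,1\}$-valued. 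Consequently every leaf of $R_\tau$ has height $0$ and every branching vertex has height $1$, so $\mathbb{H}_{red}(R_\tau,\chi_\tau)$ is a direct sum of copies of $\mathcal{T}^+_0(1)$ and $HF^+_{red}(Y)$ is concentrated in the single degree $\ell^+(Y)$. Then $U$ annihilates $HF^+_{red}(Y)$ and the assertion holds vacuously. (Alternatively one may invoke the classical computation of $HF^+(\pm\Sigma(2,3,6n\pm 1))$ as surgeries on the trefoil.)

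The genuinely delicate points are the two structural facts about the graded root: that the highest branching vertex of $R_\tau$ sits at height $\max\{\tau\}=1$, so that the top of $\mathbb{H}_{red}(R_\tau,\chi_\tau)$ is $2\cdot 1-2=0$; and, in the exceptional $(2,3,c)$ family, the verification that $\tau$ takes only the values $0$ and $1$. Both follow fairly mechanically once the graded root is unwound, so the main effort lies in organising the case division and in correctly importing Propositions~\ref{prop:hfiso}, \ref{prop:topdegtau} and~\ref{prop:taumax}.
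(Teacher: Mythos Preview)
Your proof is correct and follows essentially the same route as the paper: compute the grading shift from the bottom of the tower to obtain $\ell^+(Y)=2\delta(Y)-2\min\{\tau\}$, invoke Propositions~\ref{prop:topdegtau} and~\ref{prop:taumax} for the generic case, and treat the $(2,3,6n\pm1)$ families separately by showing $HF^+_{red}(Y)$ is concentrated in a single degree. You supply more detail than the paper in two places---the argument that the top of $\mathbb{H}_{red}(R_\tau,\chi_\tau)$ sits in degree~$0$ via the branching height $h^\ast=1$, and the explicit verification that $\tau$ is $\{0,1\}$-valued when $(a,b)=(2,3)$---where the paper simply writes ``easily seen''; these additions are welcome and the computations check out.
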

\begin{proof}
Recall that $HF^+(Y)$ is isomorphic to $\mathbb{H}(R_\tau , \chi_\tau)$ up to an overall grading shift. The lowest degree in $HF^+(-\Sigma(a,b,c))$ is $2\delta(Y)$ and the lowest degree in $\mathbb{H}(R_\tau , \chi_\tau)$ is $2\min\{ \tau\}$, hence the grading shift is $2\delta(Y) - 2\min\{ \tau \}$. The highest non-zero degree in $\mathbb{H}(R_\tau , \chi_\tau)$ is easily seen to be $0$, hence $\ell^+(Y) = 2\delta(Y) - 2\min\{ \tau \}$.

If $(a,b,c) \neq (2,3,6n \pm 1)$ for any $n$, then all global maxima of $\tau$ are trivial by Proposition \ref{prop:taumax}. Then by Proposition \ref{prop:topdegtau} we have that any non-zero element in the image of $U \colon HF_{red}^+(Y) \to HF_{red}^+(Y)$ has degree strictly less than $\ell^+(Y)$.

If $(a,b,c) = (2,3,6n \pm 1)$ for some $n$, then it is easily seen that $HF_{red}^+(Y)$ is concentrated in a single degree and hence $U$ acts trivially on $HF_{red}^+(Y)$.

\end{proof}

\subsection{Computation of $j^{(c)}(T_{a,b})$ and $\theta^{(c)}(T_{a,b})$}\label{sec:jtheta}

Following \cite{caka}, define $\kappa(a,b,c)$ to be the cardinality of $G \cap [0,N]$.

\begin{lemma}\label{lem:kappa}
Let $\tau_1(a,b,c)$ denote the number of integers $x,y,z$ with $0 < x < a$, $0 < y < b$, $0< z < c$ and $x/a + y/b + z/c < 1$. Then $\tau_1(a,b,c) = \kappa(a,b,c)$.
\end{lemma}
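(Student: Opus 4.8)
```latex
\textbf{Proof proposal.}
The plan is to establish a bijection between the two finite sets being counted. On one side we have the set
\[
P = \{ (x,y,z) \in \mathbb{Z}^3 \; | \; 0 < x < a, \; 0 < y < b, \; 0 < z < c, \; \tfrac{x}{a} + \tfrac{y}{b} + \tfrac{z}{c} < 1 \},
\]
whose cardinality is $\tau_1(a,b,c)$. On the other side we have $G \cap [0,N]$, whose cardinality is $\kappa(a,b,c)$, where $G$ is the semigroup generated by $bc, ac, ab$ and $N = abc - bc - ac - ab$. The key observation is that an element $g \in G \cap [0,N]$ is, by Remark \ref{rem:crt}, uniquely expressible as $g = abc\left( \tfrac{i}{a} + \tfrac{j}{b} + \tfrac{k}{c} \right)$ with $0 < i < a$, $0 < j < b$, $0 < k < c$ (the strict lower bounds hold because $g > 0$ forces $i,j,k$ not all $0$, and then coprimality together with $0 \le g \le N < abc$ forces each of $i,j,k$ to be positive and bounded as stated). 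This is exactly the parametrisation I want.

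First I would make the map in one direction: given $g \in G \cap [0,N]$, apply Remark \ref{rem:crt} to write $g = abc(i/a + j/b + k/c)$ with $(i,j,k)$ in the required range, and send $g \mapsto (i,j,k)$. The condition $g \le N = abc(1 - 1/a - 1/b - 1/c)$ translates into $i/a + j/b + k/c \le 1 - 1/a - 1/b - 1/c$, which after substituting $x = i, y = j, z = k$ is not quite the inequality defining $P$; instead I would reindex by setting $x = a - i$, $y = b - j$, $z = c - k$, so that $0 < x < a$, $0 < y < b$, $0 < z < c$, and the inequality becomes $x/a + y/b + z/c = 3 - (i/a + j/b + k/c) - (1/a+1/b+1/c) \cdot 0$... more carefully, $x/a + y/b + z/c = 3 - (i/a+j/b+k/c)$, and the bound $i/a+j/b+k/c \le 1 - 1/a-1/b-1/c$ gives $x/a+y/b+z/c \ge 2 + 1/a+1/b+1/c$, which is not the right inequality either. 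The correct reindexing is to observe that $g \mapsto N - g$ is a bijection of $G \cap [0,N]$ with $(N - G) \cap [0,N]$, not of $G$ with itself, so instead I should directly match $(i,j,k)$ with the defining inequality of $P$: the inequality $i/a + j/b + k/c \le 1 - 1/a - 1/b - 1/c$ is equivalent to $(a-i)/a + (b-j)/b + (c-k)/c \ge 2 + 1/a+1/b+1/c - 2$, i.e. ... I will instead simply verify that $g = abc(i/a+j/b+k/c) \in [0,N]$ iff $0 < i/a+j/b+k/c$ and $i/a+j/b+k/c + 1/a+1/b+1/c \le 1$. Since $i,j,k \ge 1$ the left condition is automatic, and since the integers are determined mod $a,b,c$ and lie strictly between $0$ and $a,b,c$, the condition $i/a+j/b+k/c \le 1 - 1/a - 1/b - 1/c$ is equivalent (because both sides have the same denominators and $i/a+j/b+k/c$ never equals a value in the open gap) to $i/a + j/b + k/c < 1 - 1/a - 1/b - 1/c + \varepsilon$ for small $\varepsilon$; cleaner: set $(x,y,z) = (i,j,k)$ and note $i/a+j/b+k/c \le 1 - 1/a-1/b-1/c$ iff $(i+1)/a + (j+1)/b + (k+1)/c \le 1$ is false in general, so the honest statement is that the map $(i,j,k) \mapsto (i,j,k)$ identifies $G \cap [0,N]$ with $\{(x,y,z) : 0<x<a, 0<y<b, 0<z<c, \; x/a+y/b+z/c \le 1 - 1/a-1/b-1/c\}$, and then the substitution $(x,y,z) \mapsto (a-x, b-y, c-z)$ carries this bijectively onto $P$, because $x/a+y/b+z/c \le 1-1/a-1/b-1/c$ iff $(a-x)/a + (b-y)/b + (c-z)/c \ge 2 + (1/a+1/b+1/c) - 2 = $ ... and since $(a-x)/a+(b-y)/b+(c-z)/c = 3 - (x/a+y/b+z/c)$, this is $\ge 2 + 1/a+1/b+1/c$, which is $> 2$, not $< 1$. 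So the substitution is wrong and the direct identification $(i,j,k) = (x,y,z)$ is the right one, once one checks that for such triples the value $x/a+y/b+z/c$ is never in the half-open interval $(1 - 1/a-1/b-1/c, 1)$ — but it can be (e.g. it can equal a number close to $1$), so in fact one needs the strict inequality $x/a+y/b+z/c < 1$ to be genuinely equivalent to $\le 1 - 1/a-1/b-1/c$. This equivalence holds precisely because, by coprimality, $abc(x/a+y/b+z/c) = bcx + acy + abz$ is an integer, and $abc(1 - 1/a - 1/b - 1/c) = N$ is an integer, so $x/a+y/b+z/c < 1$ iff $bcx+acy+abz \le abc - 1$ iff (using that $bcx+acy+abz \not\equiv abc \pmod{?}$... actually $bcx + acy + abz \equiv bcx \pmod a$ and $bcx \not\equiv 0$, similarly mod $b$, $c$) $bcx + acy + abz \le abc - bc - ac - ab = N$, where the last step uses the Chinese Remainder Theorem: $bcx + acy + abz$ is divisible by none of $a, b, c$, hence the largest integer $\le abc - 1$ that could equal it is $abc - bc - ac - ab$ is not forced, but rather $bcx+acy+abz \le abc-1$ together with $bcx+acy+abz$ avoiding residues $0 \bmod a$, $0 \bmod b$, $0 \bmod c$ forces it to be $\le$ the largest such integer below $abc$, which is exactly $N$ when ... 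I would spell this residue argument out carefully. This is the main obstacle: the elementary number-theoretic equivalence
\[
\tfrac{x}{a} + \tfrac{y}{b} + \tfrac{z}{c} < 1 \iff bcx + acy + abz \le N
\]
for triples with $0 < x < a$, $0 < y < b$, $0 < z < c$, which rests on the fact that $bcx + acy + abz \not\equiv 0$ modulo each of $a, b, c$ (hence cannot lie in $\{abc - ab + 1, \dots, abc - 1\} \cup \{\,\text{other near-}abc\text{ residue-forbidden ranges}\,\}$ in a way that skips down to $N$).

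Assembling: I would state the bijection $\Phi\colon P \to G \cap [0,N]$, $\Phi(x,y,z) = bcx + acy + abz$, and prove (i) well-definedness, using the residue argument above to show $0 < \Phi(x,y,z) \le N$ and $\Phi(x,y,z) \in G$ trivially; (ii) injectivity, from uniqueness of the representation (Remark \ref{rem:crt}); (iii) surjectivity, again from Remark \ref{rem:crt}, which says every $g \in G \cap [0,N]$ has a unique such representation, and the reverse implication of the equivalence then places the corresponding $(x,y,z)$ in $P$. Counting both sides gives $\tau_1(a,b,c) = \kappa(a,b,c)$, which is the claim.
```
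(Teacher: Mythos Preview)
Your final proposed bijection $\Phi\colon P \to G\cap[0,N]$, $\Phi(x,y,z)=bcx+acy+abz$, does not land in $[0,N]$, and the key equivalence you try to establish,
\[
\frac{x}{a}+\frac{y}{b}+\frac{z}{c}<1 \iff bcx+acy+abz \le N,
\]
is simply false for triples with $0<x<a$, $0<y<b$, $0<z<c$. Take $(a,b,c)=(3,4,5)$, so $N=60-20-15-12=13$, and $(x,y,z)=(1,1,1)$: then $\tfrac{1}{3}+\tfrac{1}{4}+\tfrac{1}{5}=\tfrac{47}{60}<1$, but $bcx+acy+abz=20+15+12=47>13$. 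The residue argument you sketch (that $bcx+acy+abz$ is nonzero mod $a$, $b$, $c$, hence must drop all the way down to $N$) does not force this conclusion; there are many integers in $(N,abc)$ coprime to $abc$. So the map $\Phi$ is not well-defined as stated, and the proof does not go through.

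The fix, and what the paper does, is to shift indices by one. Elements of $G\cap[0,N]$ are, by the Chinese remainder theorem, uniquely written as $g=bci+acj+abk$ with $0\le i<a$, $0\le j<b$, $0\le k<c$; the condition $g\le N$ is then $\tfrac{i}{a}+\tfrac{j}{b}+\tfrac{k}{c}\le 1-\tfrac{1}{a}-\tfrac{1}{b}-\tfrac{1}{c}$, which (using $N\notin G$ to make the inequality strict) rewrites as $\tfrac{i+1}{a}+\tfrac{j+1}{b}+\tfrac{k+1}{c}<1$. Setting $(x,y,z)=(i+1,j+1,k+1)$ gives exactly the triples counted by $\tau_1$. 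In the example above, the two elements $0,12\in G\cap[0,13]$ correspond to $(1,1,1)$ and $(1,1,2)$ in $P$; your map would have sent these triples to $47$ and $59$ instead. The correct bijection is $(x,y,z)\mapsto bc(x-1)+ac(y-1)+ab(z-1)$.
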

\begin{proof}
By Remark \ref{rem:crt}, any $x \in G$ with $x \le N$ has a unique representation $x bc + y ac + zab$ with $0 \le x < a$, $0 \le y < b$, $0 \le z <c$. Thus $\kappa(a,b,c)$ is the number of points $(x,y,z) \in \mathbb{Z}^3_{\ge 0}$ such that
\[
x bc + y ac + z ab \le abc - bc - ac - ab.
\]
From \cite[Theorem 1.3]{caka} it follows that $N \notin G$, so $\kappa(a,b,c)$ is also the number of $x,y,z \ge 0$ such that
\[
x bc + y ac + z ab < abc - bc - ac - ab.
\]
Dividing through by $abc$, this is equivalent to
\[
\frac{x}{a} + \frac{y}{b} + \frac{z}{c} < 1 - \frac{1}{a} - \frac{1}{b} - \frac{1}{c}
\]
which can be rewritten as
\[
\frac{x+1}{a} + \frac{y+1}{b} + \frac{z+1}{q} < 1.
\]
Setting $x' = x+1, y' = y+1, z' = z+1$, we see that $\kappa(a,b,c)$ is equal to $\tau_1(a,b,c)$.
\end{proof}

Recall that $\lambda$ denotes the Casson invariant. From \cite[\textsection 19]{sav}, we have $8\lambda( \Sigma(a,b,c)) = -(a-1)(b-1)(c-1) + 4\tau_1(a,b,c)$. Thus Lemma \ref{lem:kappa} gives:
\begin{equation}\label{equ:kap}
8\lambda( \Sigma(a,b,c) ) = -(a-1)(b-1)(c-1) + 4\kappa(a,b,c).
\end{equation}

From \cite{fs}, \cite{cs}, we also have that
\[
8 \lambda(\Sigma(a,b,c)) = \sum_{j=1}^{c-1} \sigma_{T_{b,c}}(j/c) = \sigma^{(c)}(T_{a,b}).
\]

\begin{theorem}\label{thm:theta}
Let $a,b,c > 1$ be coprime integers and suppose that $c$ is a prime number. Then 
\[
j^{(c)}( -T_{a,b} ) = \begin{cases} \kappa(a,b,c) & \text{if } c \text{ is odd}, \\ 2\kappa(a,b,c) & \text{if } c=2 \end{cases}
\]
and
\[
\theta^{(c)}( T_{a,b} ) = (a-1)(b-1)/2.
\]
\end{theorem}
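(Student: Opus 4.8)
Since $\Sigma_c(-T_{a,b}) = -\Sigma_c(T_{a,b}) = -\Sigma(a,b,c)$, and by definition $j^{(c)}(-T_{a,b}) = j^{(c)}(\Sigma_c(-T_{a,b}),\mathfrak{s}_0) = j^{(c)}(-\Sigma(a,b,c))$ (computed for the restriction to $\mathbb{Z}_c$ of the Seifert circle action), the theorem reduces to the stated formula for $j^{(c)}(-\Sigma(a,b,c))$. Indeed, granting $j^{(c)}(-\Sigma(a,b,c)) = \kappa(a,b,c)$ for $c$ odd and $2\kappa(a,b,c)$ for $c=2$, the value of $\theta^{(c)}(T_{a,b})$ follows by a direct computation: by \cite{fs}, \cite{cs}, equation (\ref{equ:kap}) and Lemma \ref{lem:kappa} one has $\sigma^{(c)}(T_{a,b}) = 8\lambda(\Sigma(a,b,c)) = -(a-1)(b-1)(c-1) + 4\kappa(a,b,c)$, and on substituting this together with the value of $j^{(c)}(-T_{a,b})$ into the definition of $\theta^{(c)}$ the $\kappa$-terms cancel, leaving $\theta^{(c)}(T_{a,b}) = \max\{0,(a-1)(b-1)/2\} = (a-1)(b-1)/2$.

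I would therefore concentrate on computing $j^{(c)}(-\Sigma(a,b,c))$, reordering $a,b,c$ so that $1<a<b<c$ (this affects neither $\kappa$, nor $\min\tau$, nor $\dim_{\mathbb{F}} HF^+_{red}$). The triple $(2,3,5)$ is handled by hand: there $HF^+_{red}(-\Sigma(2,3,5))=0$, so $HSW^*_{\mathbb{Z}_c}(-\Sigma(2,3,5))$ is a single free $H^*_{S^1\times\mathbb{Z}_c}$-module, all the $\delta_j^{(c)}(-\Sigma(2,3,5))$ coincide, and $j^{(c)}(-\Sigma(2,3,5))=0=\kappa(2,3,5)$. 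For every other triple I would apply Proposition \ref{prop:jinv} to $Y=-\Sigma(a,b,c)$: hypothesis (1) holds since $HF^+(-\Sigma(a,b,c))$ lies in even degrees by \cite{os2}; hypothesis (3) holds since $N\ge 1$ (as $(a,b,c)\neq(2,3,5)$) forces the $\tau$-function, which satisfies $\tau(0)=0$, $\tau(1)=1$, $\tau(N+1)=0$, to be non-constant, so its graded root has at least two leaves and $HF^+_{red}(-\Sigma(a,b,c))\neq 0$; and hypothesis (4) is precisely Proposition \ref{prop:topdeg}. Granting hypothesis (2) (treated last), Proposition \ref{prop:jinv} gives $j^{(c)}(-\Sigma(a,b,c)) = \tfrac12\ell^+(-\Sigma(a,b,c)) - \delta_0^{(c)}(-\Sigma(a,b,c)) + \dim_{\mathbb{F}} HF^+_{red}(-\Sigma(a,b,c))$ for $c$ odd, and twice this quantity for $c=2$.

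Two further inputs evaluate the right-hand side. First, Proposition \ref{prop:topdeg} gives $\ell^+(-\Sigma(a,b,c)) = 2\delta(-\Sigma(a,b,c)) - 2\min\tau$, while the proof of Proposition \ref{prop:deltaprop} shows $\delta_0^{(c)}(-\Sigma(a,b,c)) = -\delta(\Sigma(a,b,c)) = \delta(-\Sigma(a,b,c))$; hence $\tfrac12\ell^+(-\Sigma(a,b,c)) - \delta_0^{(c)}(-\Sigma(a,b,c)) = -\min\tau$ and $j^{(c)}(-\Sigma(a,b,c)) = -\min\tau + \dim_{\mathbb{F}} HF^+_{red}(-\Sigma(a,b,c))$. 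Second, I would prove the combinatorial identity $\dim_{\mathbb{F}} HF^+_{red}(-\Sigma(a,b,c)) = \kappa(a,b,c) + \min\tau$: writing the local minima and maxima of $\tau$ on $[0,N+1]$ in alternating order as $0=m_0<M_1<m_1<\cdots<M_r<m_r=N+1$, Proposition \ref{prop:hfiso} presents $HF^+_{red}(-\Sigma(a,b,c))$ as a direct sum of towers of lengths $\chi(w_k)-\chi(v_k)$ for $k\ge 2$, the $v_k$ running over all leaves except the global minimum and the $w_k$ over the $r$ local maxima; summing gives $\dim_{\mathbb{F}} HF^+_{red}(-\Sigma(a,b,c)) = \sum_l\tau(M_l)-\sum_i\tau(m_i)+\min\tau$, and this difference equals the number of downward steps of $\tau$ on $[0,N]$, which by equation (\ref{equ:taudiff}) and $N\notin G$ is $|N-(G\cap[0,N])|=\kappa(a,b,c)$. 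Combining the two computations, $j^{(c)}(-\Sigma(a,b,c))=\kappa(a,b,c)$ (resp.\ $2\kappa(a,b,c)$ for $c=2$), which is the theorem.

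The step I expect to be the main obstacle is hypothesis (2) of Proposition \ref{prop:jinv} for $Y=-\Sigma(a,b,c)$: that $\delta_0^{(c)}(-\Sigma(a,b,c)) - \delta_j^{(c)}(-\Sigma(a,b,c)) = \dim_{\mathbb{F}} HF^+_{red}(-\Sigma(a,b,c))$ for some $j$. Corollary \ref{cor:ineq} always gives the inequality $\delta_0^{(c)}(-\Sigma(a,b,c)) - \delta_j^{(c)}(-\Sigma(a,b,c)) \le \dim_{\mathbb{F}} HF^+_{red}(-\Sigma(a,b,c))$, so the content is the reverse inequality for $j=\infty$, which---using $\delta_0^{(c)}(-\Sigma(a,b,c))=-\delta(\Sigma(a,b,c))$ and $\dim_{\mathbb{F}} HF^+_{red}(-\Sigma(a,b,c))=-\delta(\Sigma(a,b,c))-\lambda(\Sigma(a,b,c))$ from Proposition \ref{prop:deltaprop}(1)---is equivalent to $\delta_\infty^{(c)}(-\Sigma(a,b,c))=\lambda(\Sigma(a,b,c))$. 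This is the special case $Y_0=\Sigma(a,b,c)/\mathbb{Z}_c=S^3$ of Theorem \ref{thm:deltabranch} (combined again with Corollary \ref{cor:ineq}); alternatively it can be obtained directly by analysing the differentials in the spectral sequence of \cite{bh} for $HSW^*_{\mathbb{Z}_c}(-\Sigma(a,b,c))$, using that the fixed-point set of the $\mathbb{Z}_c$-action on $\Sigma(a,b,c)$ is a single circle. Once this is in hand, the remaining steps are bookkeeping.
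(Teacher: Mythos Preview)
Your overall strategy---verify the hypotheses of Proposition \ref{prop:jinv} for $Y=-\Sigma(a,b,c)$ and then read off $j'(Y)$---is exactly the paper's approach, and your treatment of hypotheses (1), (3), (4), the $(2,3,5)$ case, and the final substitution into the definition of $\theta^{(c)}$ all match.

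Where you diverge is in verifying hypothesis (2), i.e.\ in showing $\delta_\infty^{(c)}(-\Sigma(a,b,c))=\lambda(\Sigma(a,b,c))$. You propose invoking Theorem \ref{thm:deltabranch} in the degenerate case $Y_0=S^3$ (a forward reference, and one would have to check the proof of that theorem goes through when $a_1/p=1$), or a direct spectral-sequence argument that you leave unspecified. The paper does something much simpler: since $-\Sigma(a,b,c)=\Sigma_c(-T_{a,b})$ is a branched cover of a knot, property (3) of the knot invariants in \textsection\ref{sec:kci} (established in \cite{bh,bar}) says directly that $\delta_j^{(c)}(-T_{a,b})=-\sigma^{(c)}(-T_{a,b})/2$ for large $j$, hence $\delta_\infty^{(c)}(-\Sigma(a,b,c))=\sigma^{(c)}(T_{a,b})/8=\lambda(\Sigma(a,b,c))$. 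This is the intended shortcut and removes what you flagged as ``the main obstacle''.

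A secondary difference: your combinatorial derivation of $\dim_{\mathbb{F}}HF^+_{red}(-\Sigma(a,b,c))=\kappa(a,b,c)+\min\tau$ is in the paper simply a citation of \cite[\S 5]{caka}, phrased there as $-\min\tau+\delta(Y)=\kappa(a,b,c)-\lambda(Y)$. Your sketch is essentially correct, but the assertion that ``the $w_k$ run over the $r$ local maxima'' is not literally how the $w_k$ are defined in Proposition \ref{prop:hfiso}; the multiset $\{\chi(w_k)\}_{k\ge 2}$ does coincide with $\{\tau(M_l)\}_{l=1}^r$, but this requires a short argument (or just cite \cite{caka}).
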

\begin{proof}
Let $Y = -\Sigma(a,b,c)$. If $(a,b,c)$ is a permutation of $(2,3,5)$, then $HF^+_{red}(Y) = 0$ and $\kappa(2,3,5) = 0$. Then from \cite[Proposition 3.16]{bh}, it follows that $j^{(c)}(-T_{a,b}) = 0$ and $\theta^{(c)}(T_{a,b}) = -\sigma^{(c)}(T_{a,b})/(c-1)$. But since $\kappa(2,3,5) = 0$, Equation (\ref{equ:kap}) gives $\sigma^{(c)}(T_{a,b}) = 8 \lambda(\Sigma(a,b,c)) = -(a-1)(b-1)(c-1)$. Hence $\theta^{(c)}(T_{a,b}) = (a-1)(b-1)/2$.

Henceforth we assume that $(a,b,c)$ is not a permutation of $(2,3,5)$ and hence $HF^+_{red}(Y) \neq 0$. Recall that $\Sigma(a,b,c)$ is the boundary of a negative definite plumbing \cite{nr} whose plumbing graph has only one bad vertex in the sense of \cite{os2}. Then from \cite[Corollary 1.4]{os2}, we have that $HF^+(Y)$ is concentrated in even degrees. As explained in \cite[\textsection 7]{bh} we have $\Sigma(a,b,c) = \Sigma_c(T_{a,b})$ and the generator of the $\mathbb{Z}_c$-action on $\Sigma(a,b,c)$ is isotopic to the identity. Therefore in the spectral sequence $\{ E_r^{p,q} , d_r \}$ for the equivariant Seiberg--Witten--Floer cohomology of $Y$, we have $E_2^{0,q} = H^0( \mathbb{Z}_c ; HSW^q(Y)) \cong HSW^q(Y)$. Furthermore, the graded roots algorithm implies that $HF_{red}^+(Y)$ is concentrated in degrees $d(Y)$ and above. It follows that there can be no differentials in the spectral sequence and hence $\delta_{\mathbb{Z}_c , S^0}(Y) = \delta(Y)$. Recall that $\delta^{(c)}_j( K ) = -\sigma^{(c)}(K)/2$ for all sufficiently large $j$. Then since $Y = -\Sigma(a,b,c) = \Sigma_c( -T_{a,b})$, it follows that $$\delta_{\mathbb{Z}_c , S^j}(Y) = -\sigma^{(c)}(-T_{a,b})/8 = \lambda( \Sigma(a,b,c) ) = -\lambda(Y)$$ for sufficiently large $j$. But since $HF^*_{red}(Y)$ is concentrated in even degrees, we also have $dim_{\mathbb{F}}(HF^*_{red}(Y)) = \delta(Y) + \lambda(Y)$. Hence for large enough $j$, we have $\delta_{\mathbb{Z}_c , S^0}(Y) - \delta_{\mathbb{Z}_c , S^j}(Y) = \delta(Y) + \lambda(Y) = dim_{\mathbb{F}}(HF^*_{red}(Y))$. From Proposition \ref{prop:topdeg}, it follows that any element in the image of $U \colon HF_{red}^+(Y , \mathfrak{s}) \to HF_{red}^+(Y , \mathfrak{s})$ has degree strictly less than $\ell^+(Y,\mathfrak{s})$. Thus conditions (1)-(4) of Proposition \ref{prop:jinv} are met. Therefore (since $c$ is an odd prime and $Y$ is an integral homology sphere) we have
\[
j'(Y) = \ell^+(Y)/2 - \delta_{\mathbb{Z}_c , S^0}(Y) + \delta(Y) + \lambda(Y) = \ell^+(Y)/2 + \lambda(Y),
\]
where $j'(Y) = j^{(c)}(Y)$ if $c$ is odd and $j'(Y) = j^{(2)}(Y)/2$ if $c=2$. Moreover, Proposition \ref{prop:topdeg} also gives $\ell^+(Y) = 2( \delta(Y) -\min(\tau))$. From \cite[\textsection 5]{caka}, we also have that $-\min(\tau) + \delta(Y) = \kappa(a,b,c) - \lambda(Y)$. Thus $\ell^+(Y) = 2(\kappa - \lambda(Y))$. Hence
\[
j'(-T_{a,b}) = j^{(c)}(Y) = \ell^+(Y)/2 + \lambda(Y) = \kappa(a,b,c),
\]
which gives $j^{(c)}(-T_{a,b}) = \kappa(a,b,c)$ if $c$ is odd and $j^{(2)}(-T_{a,b}) = 2 \kappa(a,b,c)$ if $c=2$. We also have $\sigma^{(c)}( T_{a,b} ) = -8 \lambda(Y) = 8 \lambda( \Sigma(a,b,c) )$ and thus we have 
\begin{align*}
\theta^{(c)}(T_{a,b}) &= \max\left\{ 0 , \frac{2j'(Y)}{(c-1)} - \frac{\sigma^{(c)}(T_{a,b})}{2(c-1)} \right\} \\
&= \max\left\{ 0 , \frac{ 2\kappa(a,b,c) -4\lambda( \Sigma(a,b,c) )}{(c-1)} \right\} \\
&= \frac{1}{2}(a-1)(b-1)
\end{align*}
where the last line follows from Equation (\ref{equ:kap}).
\end{proof}

\section{Branched covers}\label{sec:branched}

Let $Y = \Sigma( a'_1 , a'_2 , \dots , a'_r)$ be a Brieskorn homology sphere and $p$ a prime such that $p$ divides $a'_1 a'_2 \cdots a'_r$. Without loss of generality we may assume $p$ divides $a'_1$. We set $a_1 = a'_1/p$ and $a_j = a'_j$ for $j > 1$. So $Y = \Sigma(pa_1 , a_2 , \dots , a_r)$. Then $\mathbb{Z}_p$ acts on $Y$ with quotient space $Y_0 = Y/\mathbb{Z}_p = \Sigma(a_1, \dots , a_r)$ and the quotient map $Y \to Y_0$ is a $p$-fold cyclic branched cover.

\begin{theorem}\label{thm:branch}
We have that 
\[
\delta(-Y) - \delta^{(p)}_\infty(-Y) \ge {\rm rk}( HF_{red}^+(Y)) - p \, {\rm rk}( HF_{red}^+(Y_0) ).
\]
\end{theorem}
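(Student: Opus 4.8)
The plan is to run the spectral sequence of \cite[Theorem 3.2]{bh} for $HSW^*_{\mathbb{Z}_p}(-Y)$, in the spirit of \textsection\ref{sec:fdelta}--\textsection\ref{sec:deltabri}, and to feed into it an explicit comparison between the Seifert invariants --- hence the N\'emethi $\tau$-functions --- of $Y=\Sigma(pa_1,a_2,\dots,a_r)$ and of the quotient $Y_0=\Sigma(a_1,\dots,a_r)$. First I would record the standard picture: since the $\mathbb{Z}_p$-action is the restriction of the Seifert circle action, its generator is smoothly isotopic to the identity and acts trivially on $HF^+(-Y)$, so for $p$ odd
\[
E_2(-Y)\ \cong\ \frac{\mathbb{F}[U,R,S]}{(R^2)}\,\omega\ \oplus\ \frac{HF_{red}^+(-Y)[R,S]}{(R^2)},
\]
with $\omega$ of bidegree $(0,-d(Y))$, and analogously with $\mathbb{F}[Q]$ when $p=2$. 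Because $HF^+(-Y)$ is supported in degrees $\ge -d(Y)$, the class $\omega$ is a permanent cycle that is never a target of a differential, so $\delta_0^{(p)}(-Y)=-\delta(Y)=\delta(-Y)$ and the $p=0$ tower $\mathbb{F}[U]\omega$ survives to $E_\infty$.

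Next I would reduce the claim to a dimension count. By Corollary \ref{cor:ineq} applied to $-Y$ (whose hypotheses hold, $HF^+(-Y)$ being supported in even degrees) one has $\delta(-Y)-\delta_\infty^{(p)}(-Y)=\delta_0-\delta_{j_r}\le dim_{\mathbb{F}}( HF_{red}^+(-Y) )={\rm rk}( HF_{red}^+(Y) )$, while Lemma \ref{lem:indep} identifies the left-hand side with $dim_{\mathbb{F}}\, span_{\mathbb{F}}\{ U^a x_i\ |\ 1\le i\le r,\ 0\le a<n_i\}$ for a distinguished family $x_i\in J^*_{red}\subseteq HF_{red}^+(-Y)$. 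Hence the theorem is equivalent to the bound
\[
dim_{\mathbb{F}}( HF_{red}^+(-Y) )\ -\ dim_{\mathbb{F}}\, span_{\mathbb{F}}\{ U^a x_i\}\ \le\ p\,{\rm rk}( HF_{red}^+(Y_0) ),
\]
that is, to controlling the part of $HF_{red}^+(-Y)$ which is either killed by an outgoing differential $d_r\colon E_r^{0,*}\to E_r^{r,*}$ or survives but fails to produce a jump of $\{\delta_j^{(p)}(-Y)\}$. The heart of the proof is to obtain this bound from the branched-cover structure: compute the Seifert data $(e_0,b_1,\dots)$ of $\Sigma(pa_1,a_2,\dots,a_r)$ from those of $\Sigma(a_1,\dots,a_r)$ via \eqref{equ:integral}, deduce the resulting relation between the $\Delta$- and $\tau$-functions (the same input underlying the Karakurt--Lidman inequality ${\rm rk}( HF_{red}^+(Y) )\ge p\,{\rm rk}( HF_{red}^+(Y_0) )$), and read off from the graded root of $Y$ a decomposition of $HF_{red}^+(-Y)$ in which the non-contributing classes are organised into at most $p\,{\rm rk}( HF_{red}^+(Y_0) )$ blocks. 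Morally the factor $p$ reflects that $Y$ minus its branch locus is an honest $\mathbb{Z}_p$-cover of $Y_0$ minus $K_0$, so away from the branch set $HF^+(-Y)$ resembles $p$ copies of $HF^+(-Y_0)$; the mismatch near the branch locus is what forces the genuine filtration jumps.

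The main obstacle is this last step --- converting the combinatorial relation between the $\tau$-functions (equivalently, between the graded roots) of $Y$ and $Y_0$ into the precise statement that the non-contributing part of $HF_{red}^+(-Y)$ has $\mathbb{F}$-dimension at most $p\,{\rm rk}( HF_{red}^+(Y_0) )$, while keeping track of the interaction between the higher differentials $d_r$ and the filtered $\widehat{U}$-action on the tower. Everything else --- the structure of $E_\infty(-Y)$, the identification of $\delta_0^{(p)}(-Y)$, and the passage between filtration jumps and $\mathbb{F}$-dimensions --- is a direct adaptation of Lemma \ref{lem:indep}, Corollary \ref{cor:ineq} and Proposition \ref{prop:deltaprop}.
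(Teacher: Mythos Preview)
Your proposal has a genuine gap, and the paper's proof follows an entirely different route.

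The paper's argument is a $4$-manifold cobordism argument, not a spectral-sequence/graded-roots computation. Concretely: take the negative definite plumbing $W_0$ bounding $Y_0$, push off a Seifert surface for the branch knot to get $\Sigma\subset W_0$, and let $W\to W_0$ be the $p$-fold cyclic branched cover. Then $W$ bounds $Y$ equivariantly, and the equivariant Froyshov inequality \cite[Theorem 5.3]{bh} applied to $W$ (with a $\mathbb{Z}_p$-invariant spin$^c$-structure pulled back from a characteristic on $W_0$) gives $\delta_\infty^{(p)}(-Y)+\delta(W,\mathfrak{s})\le 0$. Maximising over characteristics on $W_0$ turns $\delta(W,\mathfrak{s})$ into $p\,\delta(Y_0)+(p\sigma(W_0)-\sigma(W))/8$, and the signature defect is identified with $p\lambda(Y_0)-\lambda(Y)$ via the equivariant Casson invariant results of Collin--Saveliev \cite{cosa}. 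Combining this with ${\rm rk}(HF_{red}^+(Y))=-\delta(Y)-\lambda(Y)$ (and similarly for $Y_0$) yields the inequality directly. No analysis of the spectral sequence, the $\tau$-functions, or the graded root of $Y$ is needed.

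Your approach, by contrast, reduces correctly via Corollary \ref{cor:ineq} to bounding the ``non-contributing'' part of $HF_{red}^+(-Y)$ by $p\,{\rm rk}(HF_{red}^+(Y_0))$, but then the proof stops: you yourself flag that last step as ``the main obstacle'', and it is in fact the entire content of the theorem. There is no mechanism in your outline that makes $Y_0$ visible to the spectral sequence of $-Y$; in the branched case, localisation in $S$ does not recover $HSW^*(Y_0)$ the way it does for free actions (compare Theorem \ref{thm:free}), and the Karakurt--Lidman combinatorics you invoke proves only the rank inequality ${\rm rk}(HF_{red}^+(Y))\ge p\,{\rm rk}(HF_{red}^+(Y_0))$, not any statement about which classes in $HF_{red}^+(-Y)$ support filtration jumps in the equivariant theory. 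The heuristic that ``away from the branch set $HF^+(-Y)$ resembles $p$ copies of $HF^+(-Y_0)$'' has no rigorous counterpart in the argument as written. So the proposal is not a proof: the missing step is precisely where the equivariant Froyshov inequality (together with the Collin--Saveliev identity) does the work in the paper.
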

\begin{proof}
Let $W_0$ be the negative definite plumbing bounded by $Y_0$. Since $Y_0$ is an integral homology sphere it follows that $H_1(W_0 ; \mathbb{Z}) = 0$. Let $k \subset Y_0$ denote the branch locus of $Y \to Y_0$. Then $k$ is a knot in $Y_0$. Let $\Sigma \subset W_0$ be the pushoff of a Seifert surface for $k$, so $\Sigma$ is a properly embedded surface in $W_0$ which meets $\partial W_0 = Y_0$ in $k$. Let $W \to W_0$ be the $p$-fold cyclic cover of $W_0$ branched over $\Sigma$. Then $W$ has boundary $Y$ and the $\mathbb{Z}_p$-action on $Y$ extends to $W$. By \cite[Proposition 2.5]{bar}, for any characteristic $c \in H^2(W_0 ; \mathbb{Z})$ there exists a $\mathbb{Z}_p$-invariant spin$^c$-structure $\mathfrak{s}$ on $W$ such that $c_1(\mathfrak{s}) = \pi^*(c)$ in $H^2(W ; \mathbb{Q})$. Now we apply the equivariant Fr{\o}yshov inequality \cite[Theorem 5.3]{bh} to $W$ giving $\delta_\infty(-Y) + \delta(W , \mathfrak{s}) \le 0$, where
\begin{align*}
\delta(W , \mathfrak{s}) &= \frac{ c_1(\mathfrak{s})^2 - \sigma(W) }{8} \\
&= \frac{ p c^2 - \sigma(W) }{8} \\
&= p \left( \frac{ c^2 - \sigma(W_0) }{8} \right) + \frac{ p\sigma(W_0) - \sigma(W)}{8}.
\end{align*}
The maximum of $(c^2  -\sigma(W_0))/8$ over all characteristics of $H^2( W_0 ; \mathbb{Z})$ equals $\delta(Y_0)$ \cite[Theorem 8.3]{nem}. Hence we obtain
\[
\delta^{(p)}_\infty(-Y) + p \delta(Y_0) + \frac{ p\sigma(W_0) - \sigma(W)}{8} \le 0,
\]
which we may rewrite as
\[
\delta(-Y) - \delta^{(p)}_\infty(-Y) \ge -(\delta(Y) - p \delta(Y_0) ) + \frac{ p\sigma(W_0) - \sigma(W)}{8}.
\]

Next, we claim that $(p \sigma(W_0) - \sigma(W) )/8 = p \lambda(Y_0) - \lambda(Y)$, where $\lambda(Y_0), \lambda(Y)$ are the Casson invariants of $Y_0$ and $Y$. Assuming this claim for the moment, our inquality becomes
\[
\delta(-Y) - \delta^{(p)}_\infty(-Y) \ge -(\delta(Y)+\lambda(Y)) + p( \delta(Y_0) + \lambda(Y_0) ).
\]
But since $HF_{red}^+(Y)$ and $HF_{red}^+(Y_0)$ are concentrated in odd degrees, \cite[Theorem 1.3]{os} gives
\[
{\rm rk}( HF_{red}^+(Y)) = -\delta(Y) - \lambda(Y), \quad {\rm rk}( HF_{red}^+(Y_0)) = -\delta(Y_0) - \lambda(Y_0),
\]
and hence we obtain
\[
\delta(-Y) - \delta^{(p)}_\infty(-Y) \ge {\rm rk}( HF_{red}(-Y)) - p \, {\rm rk}( HF_{red}(-Y_0) ).
\]

It remains to prove the claim that $(p \sigma(W_0) - \sigma(W) )/8 = p \lambda(Y_0) - \lambda(Y)$. From \cite[Theorem 2]{cosa}, we have that
\[
\lambda^{\mathbb{Z}/p}(Y) - p \lambda(Y_0) = \frac{\sigma(W) - p \sigma(W_0)}{8}
\]
where $\lambda^{\mathbb{Z}/p}(Y)$ is the equivariant Casson invariant of $Y$ with respect to the $\mathbb{Z}_p$-action \cite{cosa}. Furthermore \cite[Theorem 3]{cosa} implies that $\lambda^{\mathbb{Z}/p}(Y) = \lambda(Y)$, because $k \subset Y_0 = \Sigma(a_1 , \dots , a_r)$ is a fibre of the Seifert fibration on $Y_0$, so it is a graph knot in the terminology of \cite[\textsection 5]{cosa}. This proves the claim that $\lambda(Y) - p \lambda(Y_0) = (\sigma(W) - p \sigma(W_0))/8$.
\end{proof}

Let $Y,Y_0$ be as in Theorem \ref{thm:branch}. Then by \cite[Theorem 1.1]{kali}, we have an inequality ${\rm rk}( HF_{red}^+(Y)) \ge p \, {\rm rk}( HF_{red}^+(Y_0) )$. Combined with Theorem \ref{thm:branch}, this gives:
\[
p \, {\rm rk}( HF_{red}^+(Y_0) ) \le {\rm rk}( HF_{red}^+(Y))  \le p \, {\rm rk}( HF_{red}^+(Y_0) ) + (\delta(-Y) - \delta^{(p)}_\infty(-Y) ).
\]
In particular, the equality $\delta^{(p)}_\infty(-Y) = \delta(-Y)$ can only happen if ${\rm rk}( HF_{red}^+(Y)) = p \, {\rm rk}( HF_{red}^+(Y_0) )$. Hence we obtain:

\begin{corollary}
Let $Y,Y_0$ be as in Theorem \ref{thm:branch}. If ${\rm rk}( HF_{red}^+(Y)) > p \, {\rm rk}( HF_{red}^+(Y_0) )$ then $\delta^{(p)}_\infty(-Y) < \delta(-Y)$.
\end{corollary}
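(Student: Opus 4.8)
The plan is to obtain this statement as an immediate consequence of Theorem~\ref{thm:branch}, which already carries all of the geometric content. That theorem gives the inequality
\[
\delta(-Y) - \delta^{(p)}_\infty(-Y) \ge {\rm rk}( HF_{red}^+(Y)) - p \, {\rm rk}( HF_{red}^+(Y_0) ).
\]
Since the ranks of $HF_{red}^+(Y)$ and $HF_{red}^+(Y_0)$ are non-negative integers, the hypothesis ${\rm rk}( HF_{red}^+(Y)) > p \, {\rm rk}( HF_{red}^+(Y_0) )$ is equivalent to ${\rm rk}( HF_{red}^+(Y)) - p \, {\rm rk}( HF_{red}^+(Y_0) ) \ge 1$. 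Plugging this into the displayed inequality forces $\delta(-Y) - \delta^{(p)}_\infty(-Y) \ge 1 > 0$, which is exactly $\delta^{(p)}_\infty(-Y) < \delta(-Y)$. That is the whole proof.

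To phrase it in the way the surrounding discussion is organised, I would first recall the Karakurt--Lidman bound ${\rm rk}( HF_{red}^+(Y)) \ge p \, {\rm rk}( HF_{red}^+(Y_0) )$ from \cite{kali} and note that, together with Theorem~\ref{thm:branch}, it sandwiches ${\rm rk}( HF_{red}^+(Y))$ between $p \, {\rm rk}( HF_{red}^+(Y_0) )$ and $p \, {\rm rk}( HF_{red}^+(Y_0) ) + (\delta(-Y) - \delta^{(p)}_\infty(-Y))$. Reading this contrapositively: if $\delta^{(p)}_\infty(-Y) = \delta(-Y)$, the two bounds collapse and one gets ${\rm rk}( HF_{red}^+(Y)) = p \, {\rm rk}( HF_{red}^+(Y_0) )$. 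The corollary is precisely the contrapositive of this observation, so it requires no new input.

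There is no genuine obstacle to overcome at this step: the substance lies entirely in Theorem~\ref{thm:branch} (whose proof combines the equivariant Froyshov inequality from \cite{bh}, N\'emethi's description of $\delta(Y_0)$ via characteristic vectors of the negative definite plumbing, and the Collin--Saveliev surgery formula for the equivariant Casson invariant together with the fact that a Seifert fibre is a graph knot) and in the Karakurt--Lidman rank inequality. The only point worth making explicit in the write-up is the integrality of ${\rm rk}( HF_{red}^+(Y)) - p \, {\rm rk}( HF_{red}^+(Y_0) )$, which is what upgrades the weak inequality $\ge$ of Theorem~\ref{thm:branch} to the strict inequality $<$ in the conclusion.
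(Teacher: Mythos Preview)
Your proposal is correct and matches the paper's own treatment: the corollary is stated immediately after the sandwich inequality derived from Theorem~\ref{thm:branch} and the Karakurt--Lidman bound, with the contrapositive reading you describe serving as the entire argument. Your first paragraph is in fact slightly more direct than the paper's phrasing, since the hypothesis already makes the right-hand side of Theorem~\ref{thm:branch} strictly positive without any appeal to integrality or to \cite{kali}; the integrality remark and the sandwich are not wrong, just unnecessary for this particular implication.
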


\section{Free actions}\label{sec:free}

Suppose $p$ does not divide $a_1 a_2 \cdots a_r$. Then the restriction of the $S^1$-action on $Y = \Sigma(a_1 , \dots , a_r)$ acts freely on $Y$. Let $Y_0 = Y/\mathbb{Z}_p$ be the quotient. Then $Y$ is a rational homology sphere. In fact if $Y = M( e_0 , (a_1 , b_1) , \dots , (a_r , b_r))$, then it is easily seen that $Y_0 = M( pe_0 , (a_1 , pb_1) , \dots , (a_r , pb_r) )$. It is easy to see that $H_1(Y_0 ; \mathbb{Z}) \cong \mathbb{Z}_p$ and thus $Y_0$ has exactly $p$ spin$^c$-structures. Further, the pullback to $Y$ of any spin$^c$-structure on $Y_0$ must coincide with the unique spin$^c$-structure on $Y$.

\begin{theorem}\label{thm:free}
For any spin$^c$-structure $\mathfrak{s}_0$ on $Y_0$, we have 
\[
\delta^{(p)}_\infty(Y) - \delta(Y) = {\rm rk}( HF_{red}^+(Y) ) - {\rm rk}( HF_{red}^+(Y_0 , \mathfrak{s}_0 ) ).
\]
\end{theorem}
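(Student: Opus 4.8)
The plan is to reduce the identity, via Proposition~\ref{prop:deltaprop}, to a statement about the equivariant spectral sequence of $Y$ alone, and to effect that reduction by a free‑quotient comparison between the $\mathbb{Z}_p$‑equivariant Seiberg--Witten--Floer cohomology of $Y$ and the ordinary Floer cohomology of $Y_0$. The proof of Proposition~\ref{prop:deltaprop} gives, for all $j\ge0$, $\delta^{(p)}_j(Y)=\delta^{(p)}_\infty(Y)=\delta(Y)+m$, where $E_\infty(Y)\cong\tfrac{\mathbb{F}[U,R,S]}{(R^2)}U^{m}\theta\ \oplus\ E_\infty(Y)_{red}$ with $\theta$ in degree $2\delta(Y)$, $\ 0\le m\le{\rm rk}(HF^+_{red}(Y))=-\delta(Y)-\lambda(Y)$, and $\dim_{\mathbb{F}}HSW^*_{\mathbb{Z}_p}(Y)=\dim_{\mathbb{F}}E_\infty(Y)$ in every degree. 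Thus the theorem is equivalent to the pair of identities
\[
\dim_{\mathbb{F}}E_\infty(Y)_{red}={\rm rk}(HF^+_{red}(Y))-m
\quad\text{and}\quad
\dim_{\mathbb{F}}E_\infty(Y)_{red}={\rm rk}(HF^+_{red}(Y_0,\mathfrak{s}_0)),
\]
and the free‑quotient comparison will supply the second one (together with the $\mathfrak{s}_0$‑independence of ${\rm rk}(HF^+_{red}(Y_0,\mathfrak{s}_0))$, which can also be seen directly from the fact that $Y_0$ is a Seifert rational homology sphere whose $HF^+(Y_0,\mathfrak{s}_0)$ is supported in a single parity, computed in each spin$^c$‑structure by N\'emethi's graded‑roots algorithm).

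For the comparison: since $p\nmid a_1\cdots a_r$, the subgroup $\mathbb{Z}_p\subset S^1$ acts freely on $Y$, and as $p$ is prime every point of the Seiberg--Witten--Floer spectrum $SWF(Y)$ is either $\mathbb{Z}_p$‑fixed or has trivial stabiliser. The fixed‑point spectrum $SWF(Y)^{\mathbb{Z}_p}$ is, up to a grading shift, the Seiberg--Witten--Floer spectrum of $(Y_0,\mathfrak{s}_0)$, where $\mathfrak{s}_0$ is the spin$^c$‑structure on $Y_0$ determined by the chosen lift of the $\mathbb{Z}_p$‑action to the unique spin$^c$‑structure on $Y$; the $p$ lifts differ by characters of $\mathbb{Z}_p$, all of which extend over $S^1$, so the resulting equivariant spectra, and hence $\delta^{(p)}_\infty(Y)$, are independent of the choice while $\mathfrak{s}_0$ runs over all $p$ spin$^c$‑structures of $Y_0$. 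Applying the $\mathbb{Z}_p$‑homotopy‑orbit construction to the isotropy‑separation cofibre sequence $SWF(Y)^{\mathbb{Z}_p}\to SWF(Y)\to Q$ — where the $\mathbb{Z}_p$‑action on $Q$ is free (so $Q_{h\mathbb{Z}_p}\simeq Q/\mathbb{Z}_p$) and trivial on $SWF(Y)^{\mathbb{Z}_p}$ — and then taking $S^1$‑equivariant cohomology, together with the reducible‑tower cofibre sequences for $Y$ and for $Y_0$, yields a long exact sequence of $H^*_{S^1}$‑modules
\[
\cdots\to \widetilde H^*_{S^1}(T)\xrightarrow{\,f\,} HSW^*_{\mathbb{Z}_p}(Y)\xrightarrow{\,g\,} HF^+_{red}(Y_0,\mathfrak{s}_0)[\sigma]\xrightarrow{\,\partial\,} \widetilde H^{*+1}_{S^1}(T)\to\cdots,
\]
where $\widetilde H^*_{S^1}(T)$ is a grading shift of the free module $\tfrac{\mathbb{F}[U,R,S]}{(R^2)}$ (the contribution of the reducible of $Y$, a $\mathbb{Z}_p$‑representation sphere Thom‑ified over $B\mathbb{Z}_p$) and $[\sigma]$ an explicit shift.

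Since an $\mathbb{F}[U]$‑linear map from a torsion module to a torsion‑free one vanishes, $\partial=0$; by exactness the sequence collapses to $0\to \widetilde H^*_{S^1}(T)\to HSW^*_{\mathbb{Z}_p}(Y)\to HF^+_{red}(Y_0,\mathfrak{s}_0)[\sigma]\to 0$, which splits as graded $\mathbb{F}$‑vector spaces, so $HSW^*_{\mathbb{Z}_p}(Y)\cong\tfrac{\mathbb{F}[U,R,S]}{(R^2)}[n]\ \oplus\ HF^+_{red}(Y_0,\mathfrak{s}_0)[\sigma]$. Equating this with $HSW^*_{\mathbb{Z}_p}(Y)\cong\tfrac{\mathbb{F}[U,R,S]}{(R^2)}[2\delta(Y)+2m]\ \oplus\ E_\infty(Y)_{red}$ from Proposition~\ref{prop:deltaprop}, the free summands force $n=2\delta(Y)+2m$ (compare dimensions in high degrees, or compare the two computations of $\delta^{(p)}_0(Y)$), and the finite summands then give $E_\infty(Y)_{red}\cong HF^+_{red}(Y_0,\mathfrak{s}_0)[\sigma]$ as graded $\mathbb{F}$‑vector spaces, which is the second equality above (and shows in particular that $E_\infty(Y)_{red}$ is finite‑dimensional). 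It remains to prove the first equality $\dim_{\mathbb{F}}E_\infty(Y)_{red}={\rm rk}(HF^+_{red}(Y))-m$, which is now a statement purely about the equivariant spectral sequence of $Y$: it should follow from a careful analysis of that spectral sequence in the spirit of \cite[\textsection 5]{bh}, the point being that each of the $m$ units by which the base of the free tower is pushed down is paired with a differential that simultaneously removes one generator's worth of $E_2(Y)_{red}$, and that no residual ``tails'' survive. Establishing this last dimension count — in tandem with the precise form of the free‑quotient comparison, especially the grading shifts $n$ and $\sigma$ and the $\mathbb{Z}_p$‑representation at the reducible of $Y$ — is the main obstacle; granting it, the two equalities combine to give the theorem.
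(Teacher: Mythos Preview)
Your overall framework is right: via Proposition~\ref{prop:deltaprop} the theorem reduces to $m={\rm rk}(HF^+_{red}(Y))-{\rm rk}(HF^+_{red}(Y_0,\mathfrak{s}_0))$, and a fixed-point comparison between $HSW^*_{\mathbb{Z}_p}(Y)$ and $HSW^*(Y_0,\mathfrak{s}_0)$ is the correct bridge. But there are two genuine gaps.

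First, you explicitly leave unproven the dimension count relating $E_\infty(Y)_{red}$ to ${\rm rk}(HF^+_{red}(Y))-m$, and this is precisely where the substance of the argument lies. The paper proves this by showing that the only nonzero differentials in the spectral sequence go from tower to reduced part, i.e.\ that $d_r|_{E_r(Y)_{red}}=0$ for every $r\ge 2$. Parity handles odd $r$; for even $r$ the paper passes to \emph{integer} coefficients, where $H^*_{\mathbb{Z}_p}(pt;\mathbb{Z})\cong\mathbb{Z}[S]/(pS)$ is concentrated in even degrees, forcing the reduced-to-reduced component of $d_r$ to vanish, and then descends back to $\mathbb{F}_p$. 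This upgrades the easy inequality $s_r-s_{r+1}\ge m_{r+1}-m_r$ to an equality at every page, which telescopes to $m={\rm rk}(HF^+_{red}(Y))-{\rm rk}((M_\infty)_{red})$. Without this integer-coefficient trick you only get an inequality, not the theorem.

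Second, your isotropy-separation long exact sequence is not set up correctly, and the conclusion you draw from it is false in general. The $\mathbb{Z}_p$-fixed locus of the Conley index contributes $HSW^*(Y_0,\mathfrak{s}_0)\otimes_{\mathbb{F}}H^*_{\mathbb{Z}_p}$, not a bare grading shift of $HF^+_{red}(Y_0,\mathfrak{s}_0)$; splicing in the reducible-tower sequences does not collapse this to the three-term sequence you write. More seriously, $E_\infty(Y)_{red}$ is typically \emph{infinite}-dimensional over $\mathbb{F}$ (it carries a nontrivial $S$-action): for instance when $m=0$ one has $E_\infty(Y)_{red}=E_2(Y)_{red}\cong HF^+_{red}(Y)[R,S]/(R^2)$. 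The invariant that matches ${\rm rk}(HF^+_{red}(Y_0,\mathfrak{s}_0))$ is not $\dim_{\mathbb{F}}E_\infty(Y)_{red}$ but the rank of the stable column $(M_\infty)_{red}$, i.e.\ the per-degree rank of $S^{-1}E_\infty(Y)_{red}$. The paper obtains this cleanly by applying the localisation theorem in equivariant cohomology (inverting $S$) to the pair $(I(Y),I(Y)^{\mathbb{Z}_p})$, which gives $S^{-1}HSW^*_{\mathbb{Z}_p}(Y)\cong HSW^*(Y_0,\mathfrak{s}_0)\otimes_{\mathbb{F}}S^{-1}H^*_G$ directly and bypasses the free-quotient piece $Q$ altogether.
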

\begin{proof}
Set $G = \mathbb{Z}_p$ and $H^*_G = H^*_G( pt ; \mathbb{F})$. By choosing a $G$-invariant metric on $Y$, we may construct a $S^1 \times G$-equivariant Conley index for $(Y,\mathfrak{s})$, which we denote by $I(Y , \mathfrak{s})$, here $\mathfrak{s}$ denotes the unique spin$^c$-structure on $Y$. As shown in \cite[\textsection 3]{lima}, the Conley index $I(Y_0 , \mathfrak{s}_0)$ for $(Y_0 , \mathfrak{s}_0)$ can be identified with the $\mathbb{Z}_p$-fixed point set $I(Y , \mathfrak{s})^{\mathbb{Z}_p}$ of $I(Y , \mathfrak{s})$. Thus we have isomorphisms
\begin{align*}
\widetilde{H}^*_{S^1 \times G}( I(Y , \mathfrak{s})^{\mathbb{Z}_p} ; \mathbb{F} ) & \cong \widetilde{H}^*_{S^1 \times G}( I(Y_0 , \mathfrak{s}_0) ; \mathbb{F}) \\
& \cong \widetilde{H}^*_{S^1}( I(Y_0 , \mathfrak{s}_0) , \mathbb{F}) \otimes_{\mathbb{F}} H^*_G \\
& \cong HSW^*( Y_0 , \mathfrak{s}_0) \otimes_{\mathbb{F}} H^*_G.
\end{align*}
It should be noted that the above isomorphisms only preserve relative gradings. Recall that $H^*_G \cong \mathbb{F}[Q]$, $deg(Q) = 1$ if $p=2$ and $H^*_G \cong \mathbb{F}[R,S]/(R^2)$, $deg(R) = 1$, $deg(S) = 2$ if $p$ is odd. In the case $p=2$, define $S = Q^2$. Let $\mathcal{S} = \{ 1 , S , S^2 , \dots \}$. Then $\mathcal{S}$ is a multiplicative subset of $H^*_{S^1 \times G} = H^*_{S^1 \times G}(pt ; \mathbb{F}) \cong H^*_G[U]$. The localisation theorem in equivariant cohomology \cite[III, Theorem 3.8]{die} applied to the pair $( I(Y , \mathfrak{s} ) , I(Y , \mathfrak{s})^{\mathbb{Z}_p})$ and multiplicative set $\mathcal{S}$ implies that the inclusion $I(Y , \mathfrak{s})^{\mathbb{Z}_p} \to I(Y , \mathfrak{s})$ induces an isomorphism
\begin{equation}\label{equ:lociso}
S^{-1} \widetilde{H}^*_{S^1 \times G}( I(Y , \mathfrak{s}) ; \mathbb{F} ) \to S^{-1} \widetilde{H}^*_{S^1 \times G}( I(Y , \mathfrak{s})^{\mathbb{Z}_p} ; \mathbb{F} ).
\end{equation}
Since $\widetilde{H}^*_{S^1 \times G}( I(Y , \mathfrak{s}) ; \mathbb{F} ) \cong HSW^*_{\mathbb{Z}_p}$ and as we have shown above, $\widetilde{H}^*_{S^1 \times G}( I(Y , \mathfrak{s})^{\mathbb{Z}_p} ; \mathbb{F} ) \cong HSW^*( Y_0 , \mathfrak{s}_0) \otimes_{\mathbb{F}} H^*_G$, we get an isomorphism
\[
S^{-1} HSW^*_{\mathbb{Z}_p}( Y , \mathfrak{s} ) \cong HSW^*(Y_0 , \mathfrak{s}_0) \otimes_{\mathbb{F}} S^{-1} H^*_G.
\]
This is an isomorphism of relatively graded $H^*_{S^1 \times G}$-modules.

For the rest of the proof we restrict to the case that $p$ is odd. The proof in the case that $p=2$ is similar. We have
\[
HSW^*(Y_0 , \mathfrak{s}_0) \cong HF^+_*(Y_0 , \mathfrak{s}_0) \cong \mathbb{F}[U] \theta_0 \oplus HF_{red}^+(Y_0 , \mathfrak{s}_0)
\]
for some $\theta_0$. Combined with (\ref{equ:lociso}), we have an isomorphism
\begin{equation}\label{equ:lociso1}
S^{-1} HSW^*_{\mathbb{Z}_p}( Y , \mathfrak{s} ) \cong \frac{\mathbb{F}[U,R,S,S^{-1}]}{(R^2)} \theta_0 \oplus \frac{HF_{red}^+(Y_0, \mathfrak{s}_0)[R,S,S^{-1}]}{(R^2)}.
\end{equation}
On the other hand, the proof of Proposition \ref{prop:deltaprop} gives an isomorphism
\[
HSW^*_{\mathbb{Z}_p}(Y , \mathfrak{s}) \cong E_\infty(Y) \cong \frac{\mathbb{F}[U,R,S]}{(R^2)} U^m \theta \oplus E_\infty(Y)_{red}
\]
under which the $\mathbb{F}[U]$-module structure is given by an endomorphism of the form $\widehat{U} = U_{(0,2)} + U_{(1,1)} + \cdots $ with $U_{(0,2)} = U$. Localising with respect to $S$ gives an isomorphism
\[
S^{-1} HSW^*_{\mathbb{Z}_p}(Y , \mathfrak{s}) \cong E_\infty(Y) \cong \frac{\mathbb{F}[U,R,S,S^{-1}]}{(R^2)} U^m \theta \oplus S^{-1} E_\infty(Y)_{red}.
\]
Define $W \subseteq S^{-1} HSW^*_{\mathbb{Z}_p}( Y , \mathfrak{s} ) )$ to be the set of $x \in S^{-1} HSW^*_{\mathbb{Z}_p}( Y , \mathfrak{s} ) )$ such that for each $j \ge 0$, there exists a $k \ge 0$ for which $U^k x \in \mathcal{F}_j(Y)$. The proof of Proposition \ref{prop:deltaprop} demonstrates that $W \cong S^{-1} E_\infty(Y)_{red}$. On the other hand, the isomorphism (\ref{equ:lociso1}) clearly shows that $W \cong HF_{red}^+(Y_0, \mathfrak{s}_0)[R,S,S^{-1}]/(R^2)$. Combining these, we have an isomorphism
\[
S^{-1} E_\infty(Y)_{red} \cong HF_{red}^+(Y_0, \mathfrak{s}_0)[R,S,S^{-1}]/(R^2).
\]

In any fixed degree $j$, the rank of $(HF_{red}^+(Y_0, \mathfrak{s}_0)[R,S,S^{-1}]/(R^2))^j$ is equal to ${\rm rk}( HF_{red}^+(Y_0 , \mathfrak{s}_0))$, hence the same is true of $S^{-1} E_\infty(Y)_{red}$. From \cite[Lemma 5.7]{bh}, we have that $S \colon E_{r}^{p,q}(Y) \to E_r^{p+2,q}(Y)$ is an isomorphism for all large enough $p$. Hence for large enough $p$, $E_r^{2p,*}(Y)$ is independent of $p$ and we denote the resulting group by $M_r^*$. We similarly define $M_\infty^*$. Clearly the rank of $(S^{-1} E_\infty(Y)_{red})^j$ for any $j$ equals ${\rm rk}( M_\infty )_{red}$. So we have proven that
\[
{\rm rk}(HF_{red}^+(Y_0 , \mathfrak{s}_0) ) = {\rm rk}(M_\infty)_{red}.
\]
Set $s_r = {\rm rk}(M_r)_{red}$. It follows from \cite[Lemma 5.8]{bh} that $(M_{r+1})_{red}$ is a subquotient of $(M_r)_{red}$. Hence the sequence $s_2 , s_3, \dots $ is decreasing and equals ${\rm rk}(M_\infty)_{red}$ for sufficiently large $r$. Furthermore, $M_2 \cong HF^+(Y)$, so $s_2 = {\rm rk}( HF_{red}^+(Y) )$.

Recall from the proof of Proposition \ref{prop:deltaprop} that 
\[
E_r(Y) \cong \frac{\mathbb{F}[U,R,S]}{(R^2)} U^{m_r}\theta \oplus E_r(Y)_{red}
\]
for some increasing sequence $0 = m_0 \le m_1 \le \dots$. Suppose $m_{r+1} > m_r$. Then $d_r( U^{m_r + j} \theta ) \neq 0$ for $0 \le j \le m_{r+1} - m_r$. Notice that $U^{m_r+j}\theta$ has bi-degree $(0 , a)$ where $a = 2m_r + 2j + d(Y)$ is even, hence $d_r( U^{m_r + j} \theta )$ has bi-degree $( r , a + (1-r) )$. But all elements of $E_r(Y)_{red}$ have bi-degree $(u,v)$ with $v$ odd (because $E_r(Y)_{red}$ is a subquotient of $HF_{red}^+(Y)[ R , S ]/(R^2)$ and $HF_{red}^+(Y)$ is concentrated in odd degrees). Hence $a + (1-r)$ must be odd, which means that $r$ is even. So we can regard $d_r$ as mapping into $(M_r)_{red}$. Hence the rank of $(M_{r+1})_{red}$ is at least $m_{r+1} - m_{r}$ less than the rank of $(M_r)_{red}$, that is,
\begin{equation}\label{equ:srmr}
s_{r} - s_{r+1} \ge m_{r+1} - m_r.
\end{equation}
We claim that this inequality is actually an equality. Equivalently $M_{r+1}$ is the quotient of $M_r$ by the span of $\{ d_r( U^{m_r + j} ) \}$, $0 \le j \le m_{r+1} - m_r$. This is also equivalent to saying that $d_r( x ) = 0$ for all $r \ge 2$ and all $x \in E_r(Y)_{red}$. Consider a non-zero homogeneous element $x \in E_r(Y)_{red}^{a,b}$. Since $E_r(Y)_{red}$ is a subquotient of $HF_{red}^+(Y)[ R , S ]/(R^2)$ and $HF_{red}^+(Y)$ is concentrated in odd degrees, we have that $b$ is odd. Then $d_r(x)$ has bi-degree $(a+r , b+1-r)$. If $d_r(x) \neq 0$, then $b+1-r$ must be odd and so $r$ must be odd. Thus in order to prove the claim, it is sufficient to show that $d_r = 0$ for all odd $r$.

Consider the equivariant Seiberg--Witten--Floer cohomology of $Y$ with integral coefficients $HSW^*_{\mathbb{Z}_p}( Y ; \mathbb{Z})$. This is a module over $H^*_{\mathbb{Z}_p}( pt ; \mathbb{Z} ) \cong \mathbb{Z}[ S ]/( pS )$ where $deg(S) = 2$. The key point to observe here is that $H^*_{\mathbb{Z}_p}( pt ; \mathbb{Z} )$ is concentrated in even degrees. There is a spectral sequence $(E_r^{p,q}(Y ; \mathbb{Z}) , d_r)$ and a filtration $\{ \mathcal{F}_j(Y ; \mathbb{Z}) \}$ such that $E_\infty(Y ; \mathbb{Z})$ is the associated graded module of the filtration. The mod $p$ reduction map $\mathbb{Z} \to \mathbb{Z}_p = \mathbb{F}$ induces a morphism of $HSW^*( Y ; \mathbb{Z}) \to HSW^*(Y ; \mathbb{F})$. There is also a reduction map of the equivariant Floer cohomologies, the filtrations and the spectral sequences. The results of \cite{os2} also hold with integer coefficients so
\[
HF^+(Y ; \mathbb{Z}) \cong \mathbb{Z}[U]_{d(Y)} \oplus HF_{red}^+(Y ; \mathbb{Z})
\]
and $HF_{red}^+(Y ; \mathbb{Z})$ is concentrated in odd degrees. By the universal coefficient theorem $HF_{red}^+(Y ; \mathbb{F}) \cong HF_{red}^+(Y ; \mathbb{Z}) \otimes_{\mathbb{Z}} \mathbb{F}$. Furthermore, we have that
\begin{align*}
E_2(Y ; \mathbb{Z}) &\cong H^*( \mathbb{Z}_p ; HSW^*(Y ; \mathbb{Z} )) \\
& \cong \frac{\mathbb{Z}[U , S]}{(pS)} \theta \oplus \frac{HF_{red}^+(Y ; \mathbb{Z})[S]}{(pS)}.
\end{align*}
Clearly every homogeneous element in $\mathbb{Z}[U , S]/(pS) \theta$ has bi-degree $(a,b)$ where $a$ and $b$ are even and every homogeneous element in $HF_{red}^+(Y ; \mathbb{Z})[S]/(pS)$ has bi-degree $(a,b)$ where $a$ is even and $b$ is odd. From this and the fact that the image of $d_r \colon E_r(Y ; \mathbb{Z} ) \to E_r(Y ; \mathbb{Z})$ is contained in $E_r(Y ; \mathbb{Z})_{red}$ it follows easily that $d_r \colon E_r(Y ; \mathbb{Z} ) \to E_r( Y ; \mathbb{Z})$ is zero for odd $r$.

Now using induction on $r$ one shows the following properties hold: (1) any $x \in E_r^{a,b}(Y)$ with $a$ even is in the image of the reduction map $E_r^{a,b}(Y ; \mathbb{Z}) \to E_r^{a,b}(Y)$, (2) every $x \in E_r^{a,b}(Y)$ with $a$ odd is of the form $x = Ry$ where $y$ is in the image of the reduction map $E_r^{a,b}(Y ; \mathbb{Z}) \to E_r^{a,b}(Y)$ and (3) $d_r \colon E_r(Y) \to E_r(Y)$ is zero for even $r$. In particular, this proves the claim that $d_r \colon E_r(Y) \to E_r(Y)$ is zero for all odd $r$ and hence the inequality (\ref{equ:srmr}) is actually an equality:
\begin{equation}\label{equ:srmr2}
s_{r} - s_{r+1} = m_{r+1} - m_r.
\end{equation}
Let $s = {\rm rk}( (M_\infty)_{red} ) = {\rm rk}( HF_{red}^+(Y_0 , \mathfrak{s}_0) )$ and let $m = \lim_{r \to \infty} m_r$. Summing (\ref{equ:srmr2}) from $r=2$ to infinity gives $s_2 - s = m - m_2$. But $m_2 = 0$ and $s_2 = {\rm rk}( HF_{red}^+(Y))$, so we get $m = {\rm rk}( HF_{red}^+(Y)) - {\rm rk}( HF_{red}^+(Y_0 , \mathfrak{s}_0))$. But recall from the proof of Proposition \ref{prop:deltaprop} that $\delta_\infty^{(p)}(Y) = \delta(Y) + m$, hence we get
\[
\delta_\infty^{(p)}(Y) = \delta(Y) + {\rm rk}( HF_{red}^+(Y)) - {\rm rk}( HF_{red}^+(Y_0 , \mathfrak{s}_0)).
\]
\end{proof}

We will take $\mathfrak{s}_0$ to be the restriction to $Y_0$ of the canonical spin$^c$-structure on the negative definite plumbing which $Y_0$ bounds. With this choice of spin$^c$-structure, the computation of $HF^+(-Y_0 , \mathfrak{s}_0)$ is easily obtained through the graded roots algorithm \cite{nem}. Let $\Delta_p(n)$ denote the delta function for $(Y_0 , \mathfrak{s}_0)$. Then (since $e_0 < 0$) we have
\begin{equation}\label{equ:deltap}
\Delta_p(n) = 1 + np e_0 - \sum_{j=1}^{r} \left\lceil \frac{npb_j}{a_j} \right\rceil
\end{equation}

\begin{proposition}\label{prop:deltap}
We have that $\Delta_p(n) \ge 0$ for $n > N/p$. In particular, if $p > N$, then $HF_{red}^+(-Y_0 , \mathfrak{s}_0 ) = 0$.
\end{proposition}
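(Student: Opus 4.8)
The plan is to reduce the proposition to an elementary estimate on the ceiling terms appearing in formula (\ref{equ:deltap}) for $\Delta_p$. First I would rewrite $\Delta_p$ in a more usable form: multiplying the integrality relation (\ref{equ:integral}) by $np$ gives $npe_0 + \sum_{j=1}^r \frac{npb_j}{a_j} = -\frac{np}{a_1\cdots a_r}$, and substituting this into (\ref{equ:deltap}) puts it in the shape
\[
\Delta_p(n) = 1 + \frac{np}{a_1\cdots a_r} - \sum_{j=1}^r \left( \left\lceil \frac{npb_j}{a_j} \right\rceil - \frac{npb_j}{a_j} \right).
\]
For each $j$, the term $\lceil npb_j/a_j\rceil - npb_j/a_j$ is the distance from the rational number $npb_j/a_j$ up to the next integer; since $npb_j$ is an integer this distance lies in $\{0, \tfrac1{a_j}, \dots, \tfrac{a_j-1}{a_j}\}$, so in particular it is at most $1 - \tfrac1{a_j}$.

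Summing these bounds over $j$ yields $\sum_{j=1}^r\big(\lceil npb_j/a_j\rceil - npb_j/a_j\big) \le r - \sum_{j=1}^r\tfrac1{a_j}$, and hence
\[
\Delta_p(n) \ge 1 + \frac{np}{a_1\cdots a_r} - r + \sum_{j=1}^r \frac1{a_j}.
\]
If $n > N/p$, i.e. $np > N = a_1\cdots a_r\big((r-2) - \sum_i\tfrac1{a_i}\big)$, then $\frac{np}{a_1\cdots a_r} > (r-2) - \sum_i\tfrac1{a_i}$, so the right-hand side above is strictly greater than $1 + (r-2) - \sum_i\tfrac1{a_i} - r + \sum_j\tfrac1{a_j} = -1$. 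Since $\Delta_p(n)$ is an integer, this forces $\Delta_p(n) \ge 0$, which is the first assertion.

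For the "in particular" statement, suppose $p > N$. Then $np \ge p > N$ for every $n \ge 1$, so the estimate gives $\Delta_p(n) \ge 0$ for all $n \ge 1$, while $\Delta_p(0) = 1$; thus $\Delta_p(n) \ge 0$ for all $n \ge 0$. Consequently the $\tau$-function $\tau_p(i) = \sum_{n=0}^{i-1}\Delta_p(n)$ is non-decreasing, so the graded root $(R_{\tau_p},\chi_{\tau_p})$ associated to $\tau_p$ is a single monotone ray and $\mathbb{H}(R_{\tau_p},\chi_{\tau_p})$ is a single tower, i.e. $\mathbb{H}_{red}(R_{\tau_p},\chi_{\tau_p}) = 0$. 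Since $HF^+(-Y_0,\mathfrak{s}_0)$ is, up to a grading shift, $\mathbb{H}(R_{\tau_p},\chi_{\tau_p})$, we conclude $HF^+_{red}(-Y_0,\mathfrak{s}_0) = 0$.

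I do not anticipate a genuine obstacle: the heart of the argument is that the slack in the ceiling estimate is exactly calibrated to the value of $N$, so that once $np > N$ the integer $\Delta_p(n)$ is pinned strictly above $-1$ and is therefore non-negative. The only points requiring care are the bookkeeping with the sign conventions of (\ref{equ:deltap}) and (\ref{equ:integral}) when deriving the rewritten formula for $\Delta_p$, and the (routine) passage from "$\tau_p$ non-decreasing" to "$HF^+_{red}(-Y_0,\mathfrak{s}_0) = 0$" via the graded roots algorithm.
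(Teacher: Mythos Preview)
Your proof is correct and follows essentially the same approach as the paper: rewrite $\Delta_p$ via the integrality relation (\ref{equ:integral}) as $1 + np/(a_1\cdots a_r)$ minus a sum of fractional parts $\lceil npb_j/a_j\rceil - npb_j/a_j$, bound each fractional part by $1 - 1/a_j$, and conclude $\Delta_p(n) > -1$ hence $\ge 0$ for $np > N$. Your treatment of the ``in particular'' clause via the monotonicity of $\tau_p$ is spelled out in more detail than in the paper, but is exactly the intended argument.
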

\begin{proof}
Following \cite[\textsection 3]{caka}, we let $f(x) = \lceil x \rceil - x$. Then from Equations (\ref{equ:integral}) and (\ref{equ:deltap}), we see that
\[
\Delta_p(n) = 1 + \frac{np}{a_1 \cdots a_r} - \sum_{j=1}^r f\left( \frac{ npb_j}{a_j} \right).
\]
For any integer $m$ we have $f(m/a_j) \le 1-1/a_j$, hence
\begin{equation}\label{equ:deltaest}
\Delta_p(n) \ge 1 + \frac{np}{a_1 \cdots a_r} - r + \sum_{j=1}^{r} \frac{1}{a_j}.
\end{equation}
Now suppose that $pn > N$. Thus
\[
pn > a_1 \cdots a_n \left( (r-2) - \sum_{j=1}^r \frac{1}{a_j} \right).
\]
Re-arranging, we get
\[
(1-r) + \frac{np}{a_1 \cdots a_r} + \sum_{j=1}^{r} \frac{1}{a_j} > -1.
\]
Combined with (\ref{equ:deltaest}), we get $\Delta_p(n) > -1$. But $\Delta_p$ is integer-valued, so $\Delta_p(n) \ge 0$.
\end{proof}

\begin{lemma}\label{lem:rk}
We have that ${\rm rk}( HF_{red}^+(-Y) ) > {\rm rk}( HF_{red}^+( -Y_0 , \mathfrak{s}_0 ))$ unless $r=3$ and up to reordering $(a_1,a_2,a_3)$ is one of the following:
\[
(2,3,5), (2,3,7), (2,3,11), (2,3,13), (2,3,17), (2,5,7), (2,5,9), (3,4,5), (3,4,7).
\]
\end{lemma}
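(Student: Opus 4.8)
The approach is to read both reduced Floer homologies off the graded-root (equivalently $\tau$-function) descriptions and to reduce the failure of strict inequality to a comparison of the $\tau$-function of $Y$ with its ``$p$-sparse'' sub-sampling. First, the inequality ``$\ge$'' comes for free: the reduced Floer homologies of $Y$ and $-Y$ have the same rank, and likewise for $(Y_0,\mathfrak{s}_0)$ and $(-Y_0,\mathfrak{s}_0)$, so it is equivalent to compare ${\rm rk}(HF_{red}^+(Y))$ and ${\rm rk}(HF_{red}^+(Y_0,\mathfrak{s}_0))$; by Theorem \ref{thm:free} their difference equals $\delta_\infty^{(p)}(Y)-\delta(Y)$, which is $\ge 0$ by Proposition \ref{prop:deltaprop}(2) together with the first property of the $\delta$-invariants. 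Thus the whole content of the lemma is to decide when equality holds.

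Next I would compute both sides via graded roots. Since $-Y=-\Sigma(a_1,\dots,a_r)$ and $-Y_0$ equipped with $\mathfrak{s}_0$ both bound negative-definite plumbings with a single bad vertex, $HF^+(-Y)$ and $HF^+(-Y_0,\mathfrak{s}_0)$ are, up to grading shift, the graded-root modules of $\tau(i)=\sum_{m<i}\Delta(m)$ and $\tau_p(i)=\sum_{j<i}\Delta_p(j)$ respectively. The key observation is that $\Delta_p(j)=\Delta(pj)$ (which follows from (\ref{equ:deltap}) and the description of $\Delta$ in \textsection\ref{sec:abc}), so that $\tau_p$ is assembled from the values of the $\Delta$-function of $Y$ at multiples of $p$. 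For the graded root attached to any such $\tau$, the rank of the reduced part is $\sum_i\bigl(\max_u\tau(u)-\tau(i)\bigr)_+$, where $\max_u$ runs over the interval between $i$ and the location of the minimum of $\tau$; I would record this count as a disjoint union of ``boxes''. Applying this to both $\tau$ and $\tau_p$, and using that $\Delta(m)\ge 0$ for $m>N$ (Proposition \ref{prop:deltap} with $p=1$, whose proof does not use primality) so that $\tau_p$ can only descend over indices $j\le N/p$, the plan is to build an injection from the boxes of the $\tau_p$-root into the boxes of the $\tau$-root — roughly sending column $j$ of $\tau_p$ into column $pj$ of $\tau$ — and to show it is surjective only when the $\Delta$-function of $Y$ has no more descents than its $p$-sparse sample; equivalently, only when $G\cap[0,N]$ is essentially a single arithmetic progression whose step is divisible by $p$.

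That last condition forces $N$, hence $a_1\cdots a_r$, to be bounded and in particular forces $r=3$; translating it into inequalities among $a,b,c$ (with $p$ the least prime not dividing $abc$) leaves a finite list of triples. For each remaining triple and the finitely many relevant $p$ one then computes $\tau$ and $\tau_p$ explicitly, in the style of the case analysis in Proposition \ref{prop:taumax}, and checks that equality survives exactly for the nine triples listed — here one uses that whenever $p>N$, Proposition \ref{prop:deltap} already gives ${\rm rk}(HF_{red}^+(Y_0,\mathfrak{s}_0))=0$, so equality would force $HF_{red}^+(Y)=0$, which by the graded-root computation happens only for $\Sigma(2,3,5)$, leaving only $p\le N$ to inspect. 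The main obstacle is the comparison in the second paragraph: making the injection of boxes precise, and showing it is strictly non-surjective off a genuinely finite and explicitly enumerable set of Seifert invariants — this is where the bulk of the combinatorics lies, and it is also why the resulting list is a (slightly redundant) superset of the truly exceptional cases, which the sharper analysis of Theorem \ref{thm:free1} subsequently trims.
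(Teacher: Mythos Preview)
Your derivation of the non-strict inequality is correct and, in fact, slicker than the paper's: you use Theorem~\ref{thm:free} and Proposition~\ref{prop:deltaprop}(3), which are already available, whereas the paper re-derives $\ge$ by quoting the abstract delta-sequence machinery of Karakurt--Lidman. The observation $\Delta_p(j)=\Delta(pj)$ is also the right starting point and is exactly what the paper uses.

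The gap is in the strict part. Your plan is to compare ``boxes'' of the two graded roots via an injection and argue that non-surjectivity fails only on a finite, explicitly enumerable set of Seifert data. But you never specify the injection, and the heuristic condition you extract (``$G\cap[0,N]$ is essentially a single arithmetic progression with step divisible by $p$'') does not by itself bound $N$ or force $r=3$; you acknowledge that this is ``the main obstacle'' and leave it unaddressed. As written, there is no mechanism that produces a concrete numerical inequality on the $a_i$, so the passage from your box-comparison to a finite list is missing its key step.

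The paper's route is both more direct and what makes the finiteness work. Using the Karakurt--Lidman delta-sequence formalism, the embedding $\phi(n)=pn$ identifies $(P_p,\Delta_p)$ with a subsequence of $(P,\Delta)$. Strictness then follows from exhibiting a single element $x\in P\setminus\phi(P_p)$ with $2x\le N$: the pair $\{x,N-x\}$ is a delta subsequence disjoint from $\phi(P_p)$ and contributes rank $1$ by \cite[Corollary~3.12]{kali}. The paper simply takes $x=a_2a_3\cdots a_r$ (the smallest positive element of $G$ after ordering $a_1>a_2>\cdots>a_r$), which is automatically coprime to $p$. The failure of $2x\le N$ is then the explicit inequality
\[
\frac{3}{a_1}+\frac{1}{a_2}+\cdots+\frac{1}{a_r}\ge r-2,
\]
which immediately forces $r=3$ and yields the listed triples by elementary estimates. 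This single well-chosen $x$ replaces your entire box-injection argument; I would recommend reorganising your proof around it.
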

\begin{proof}
We assume that $Y$ is not $\Sigma(2,3,5)$ and hence $N > 0$. Let $N_p = \lfloor N/p \rfloor$. By Proposition \ref{prop:deltap} we only need to consider $\Delta_p(x)$ for $x \in [0 , N_p]$.

In the proof we make use of abstract delta sequences \cite[\textsection 3]{kali}. Let $P_p = G \cap [ 0 , N_p]$. Consider the map $\phi \colon P_p \to [0,N]$ given by $\phi(n) = np$. From Equation (\ref{equ:deltap}) it is immediately clear that $\Delta(np) = \Delta_p(n)$, hence $\phi$ identifies the delta sequence $( P_p , \Delta_p)$ with a subsequence of the delta sequence $( P , \Delta)$. This gives the inequality ${\rm rk}( HF_{red}^+(-Y) ) \ge {\rm rk}( HF_{red}^+( -Y_0 , \mathfrak{s}_0 ))$ by \cite[Proposition 3.5]{kali}. To refine this to a strict inequality we will show that (except for the cases listed in the statement of the lemma) there exists an $x \in P \setminus \phi(P_p)$ such that $2x \le N$. In this case $\{ x , N -x \}$ defines a delta subsequence of $( P , \Delta )$ disjoint from $\phi(P_p)$ and then by \cite[Corollary 3.12]{kali} we obtain a strict inequality ${\rm rk}( HF_{red}^+(-Y) ) > {\rm rk}( HF_{red}^+( -Y_0 , \mathfrak{s}_0 ))$, as the module $\mathbb{H}_{red}$ corresponding to $\{ x , N - x \}$ has rank $1$.

We seek an $x \in P \setminus \phi(P_p)$ such that $2x \le N$. Equivalently $x \in P$, $p$ does not divide $x$ and $2x \le N$. Reorder $a_1, \dots  ,a_r$ such that $a_1 > a_2 > \cdots > a_r$. Consider $x = a_2 a_3 \cdots a_r$. It follows that $p$ does not divide $x$ as $p$ is coprime to $a_2, \dots , a_r$ and we also have that $x \in P$. Hence we obtain a strict rank inequality provided that $2x \le N$. Suppose on the contrary that $2x > N$, that is,
\[
2a_2 \cdots a_r > a_1 \cdots a_r \left( (r-2) - \sum_{j=1}^{r} \frac{1}{a_j} \right).
\]
Re-arranging, this is equivalent to
\begin{equation}\label{equ:condition}
\frac{3}{a_1} + \frac{1}{a_2} + \frac{1}{a_3} + \cdots + \frac{1}{a_r} \ge (r-2).
\end{equation}
Since $a_j \ge 2$ for all $j$ and at most one can be equal to $2$, we get that
\[
\frac{3}{a_1} + \frac{1}{a_2} + \cdots + \frac{1}{a_r} < \frac{3}{a_1} + \frac{(r-1)}{2},
\]
hence
\[
\frac{3}{a_1} + \frac{(r-1)}{2} > (r-2).
\]
From $a_1 > a_2 > \cdots > a_r \ge 2$, we see that $a_r \ge r+1$, hence
\[
\frac{3}{r+1} + \frac{(r-1)}{2} > (r-2)
\]
which simplifies to $(r+1)(r-3) < 6$. If $r \ge 5$, then $(r+1)(r-3) \ge 12$, so this leaves only the cases $r=3,4$.

If $r=4$, then since $a_4 \ge 2$, $a_3 \ge 3$, $a_2 \ge 4$, $a_1 \ge 5$, we have
\[
\frac{3}{a_1} + \frac{1}{a_2} + \frac{1}{a_3} + \frac{1}{a_4} \le \frac{3}{5} + \frac{1}{4} + \frac{1}{3} + \frac{1}{2} = \frac{101}{60} < 2.
\]
Hence (\ref{equ:condition}) can not be satisfied with $r=4$.

Now suppose that $r=3$. If $a_3 \ge 4$, then $a_2 \ge 5$, $a_1 \ge 6$ and so
\[
\frac{3}{a_1} + \frac{1}{a_2} + \frac{1}{a_3} \le \frac{3}{6} + \frac{1}{5} + \frac{1}{4} = \frac{19}{20} < 1
\]
and so (\ref{equ:condition}) can only be satisfied if $a_3 = 2$ or $3$.

If $a_3 = 3$ and $a_4 \ge 5$, then $a_1 \ge 7$ (since $a_1$ must be coprime to $a_3 = 3$) and so
\[
\frac{3}{a_1} + \frac{1}{a_2} + \frac{1}{a_3} \le \frac{3}{7} + \frac{1}{5} + \frac{1}{3} = \frac{101}{105} < 1.
\]
So (\ref{equ:condition}) implies that $a_4 = 4$. Then since $3/8 + 1/4 + 1/3 = 23/24 < 1$, it follows that $a_1 < 8$. Hence if $a_3 = 3$, then $(a_1, a_2, a_3) = (5,4,3)$ or $(7,4,3)$.

Lastly, suppose that $a_3 = 2$. If $a_2 > 5$, then $a_2 \ge 7$ and $a_1 \ge 9$ (since $a_1,a_2$ must be coprime to $a_3$). So
\[
\frac{3}{a_1} + \frac{1}{a_2} + \frac{1}{a_3} \le \frac{3}{9} + \frac{1}{7} + \frac{1}{2} = \frac{41}{42} < 1.
\]
So $a_2 \le 5$, hence $a_2 = 3$ or $5$.

If $a_3 = 2$ and $a_2 = 5$, then (\ref{equ:condition}) implies $a_1 \le 10$, hence $(a_1, a_2, a_3) = (7, 5, 2)$ or $(9, 5, 2)$.

If $a_3 = 2$ and $a_2 = 3$, then (\ref{equ:condition}) implies $a_1 \le 18$, hence $(a_1,a_2,a_3)$ is one of $(5, 3, 2), (7, 3, 2), (11, 3, 2), (13, 3, 2), (17, 3, 2)$.
\end{proof}

\begin{table}
\begin{equation*}
\renewcommand{\arraystretch}{1.4}
\begin{tabular}{|c|c|c|c|c|}
\hline
$(a_1,a_2,a_3)$ & $N$ & ${\rm rk}( HF_{red}^+(-Y) )$ & $p$ & ${\rm rk}( HF_{red}^+(-Y_0) )$ \\
\hline
$(2,3,7)$ & $1$ & $1$ & none & \\
\hline
$(2,3,11)$ & $5$ & $1$ & $5$ & $1$ \\
\hline
$(2,3,13)$ & $7$ & $2$ & $5$ & $0$ \\
&  &  & $7$ & $1$ \\
\hline
$(2,3,17)$ & $11$ & $2$ & $5$ & $1$ \\
&  &  & $7$ & $0$ \\
&  &  & $11$ & $1$ \\
\hline
$(2,5,7)$ & $11$ & $2$ & $3$ & $0$ \\
&  &  & $11$ & $1$ \\
\hline
$(2,5,9)$ & $17$ & $2$ & $7$ & $1$ \\
&  &  & $11$ & $0$ \\
&  &  & $13$ & $0$ \\
&  &  & $17$ & $1$ \\
\hline
$(3,4,5)$ & $13$ & $2$ & $2$ & $0$ \\
&  &  & $7$ & $0$ \\
&  &  & $11$ & $0$ \\
&  &  & $13$ & $1$ \\
\hline
$(3,4,7)$ & $23$ & $2$ & $2$ & $1$ \\
&  &  & $5$ & $0$ \\
&  &  & $11$ & $1$ \\
&  &  & $13$ & $0$ \\
&  &  & $17$ & $0$ \\
&  &  & $19$ & $0$ \\
&  &  & $23$ & $1$ \\
\hline
\end{tabular}
\end{equation*}
\caption{}\label{fig}
\end{table}

\begin{theorem}\label{thm:free2}
We have that ${\rm rk}( HF_{red}^+(-Y) ) > {\rm rk}( HF_{red}^+( -Y_0 , \mathfrak{s}_0 ))$ except in the following cases:
\begin{itemize}
\item[(1)]{$Y = \Sigma(2,3,5)$ and $p$ is any prime.}
\item[(2)]{$Y = \Sigma(2,3,11)$ and $p=5$.}
\end{itemize}
In case (1) we have ${\rm rk}( HF_{red}^+(-Y) ) = {\rm rk}( HF_{red}^+( -Y_0 , \mathfrak{s}_0 )) = 0$ and in case (2) we have ${\rm rk}( HF_{red}^+(-Y) ) = {\rm rk}( HF_{red}^+( -Y_0 , \mathfrak{s}_0 )) = 1$.
\end{theorem}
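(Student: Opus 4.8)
The plan is to combine Lemma~\ref{lem:rk}, which reduces the statement to finitely many Brieskorn spheres, with Proposition~\ref{prop:deltap}, which further reduces the relevant primes to a finite set, and then to verify the inequality case by case by explicitly computing graded roots. First I would apply Lemma~\ref{lem:rk}: the strict inequality ${\rm rk}( HF_{red}^+(-Y) ) > {\rm rk}( HF_{red}^+( -Y_0 , \mathfrak{s}_0 ))$ already holds unless $r = 3$ and, after reordering, $(a_1,a_2,a_3)$ is one of the nine triples $(2,3,5)$, $(2,3,7)$, $(2,3,11)$, $(2,3,13)$, $(2,3,17)$, $(2,5,7)$, $(2,5,9)$, $(3,4,5)$, $(3,4,7)$. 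For $Y = \Sigma(2,3,5)$ we have $N < 0$, so by \cite[Theorem~1.3]{caka} the $\tau$-function of $-Y$ is nondecreasing on all of $\mathbb{Z}_{\ge 0}$ and hence $HF_{red}^+(-Y) = 0$; the right-hand side also vanishes, since for $p \mid 30$ the quotient $Y_0$ is $S^3$, while for $p \nmid 30$ Proposition~\ref{prop:deltap} gives $\Delta_p(n) \ge 0$ for all $n \ge 0$ (as $N < 0$), so $HF_{red}^+(-Y_0,\mathfrak{s}_0) = 0$. This is case~(1). In each of the remaining eight triples $N \ge 1$, so Equation~(\ref{equ:taudiff}) together with the symmetry $\tau(N+1-n) = \tau(n)$ gives $\tau(0) = 0 < 1 = \tau(1)$ and $\tau(N) = 1 > 0 = \tau(N+1)$; hence the graded root of $-Y$ has at least two leaves and ${\rm rk}( HF_{red}^+(-Y) ) \ge 1$.

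Next I would restrict attention to a finite set of primes. If $p$ does not divide $a_1 a_2 a_3$ and $p > N$, then $HF_{red}^+(-Y_0,\mathfrak{s}_0) = 0$ by Proposition~\ref{prop:deltap}, so the strict inequality is immediate from ${\rm rk}( HF_{red}^+(-Y) ) \ge 1$. Thus for each of the eight triples it remains only to handle the finitely many primes $p \le N$ with $p \nmid a_1 a_2 a_3$, and for each such pair $(Y,p)$ I would carry out two graded roots computations. The first computes the $\tau$-function of $-\Sigma(a_1,a_2,a_3)$ on $[0,N+1]$ from Equations~(\ref{equ:tau1})--(\ref{equ:tau2}) (equivalently, from the semigroup count in Equation~(\ref{equ:taudiff})), builds the associated graded root, and reads off ${\rm rk}( HF_{red}^+(-Y) )$ via Proposition~\ref{prop:hfiso}. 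The second computes the delta function $\Delta_p$ of $(Y_0,\mathfrak{s}_0)$ from Equation~(\ref{equ:deltap}), which by Proposition~\ref{prop:deltap} need only be evaluated on $[0,\lfloor N/p\rfloor]$, builds the corresponding graded root, and reads off ${\rm rk}( HF_{red}^+(-Y_0,\mathfrak{s}_0) )$. The outcomes of all of these computations are exactly what Table~\ref{fig} records.

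Comparing the two rank columns of Table~\ref{fig} then shows that ${\rm rk}( HF_{red}^+(-Y) ) > {\rm rk}( HF_{red}^+(-Y_0,\mathfrak{s}_0) )$ in every remaining case except $(a_1,a_2,a_3) = (2,3,11)$ with $p = 5$, where both ranks equal $1$; this is case~(2). Together with case~(1) this proves the theorem. I expect the main obstacle to be not conceptual but organizational: the argument is a bounded finite verification, and the real care lies in making sure that every free prime $p \le N$ is accounted for in each of the eight triples and that the bookkeeping of the $\tau$- and $\Delta_p$-functions underlying Table~\ref{fig} is carried out correctly.
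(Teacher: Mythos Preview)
Your proposal is correct and follows essentially the same route as the paper: reduce via Lemma~\ref{lem:rk} to the nine triples, dispose of $\Sigma(2,3,5)$ using $N<0$ and Proposition~\ref{prop:deltap}, cut the remaining primes down to $p\le N$ with $p\nmid a_1a_2a_3$ using Proposition~\ref{prop:deltap}, and then verify the finitely many surviving pairs by the graded-root computations recorded in Table~\ref{fig}. One small remark: the aside about $p\mid 30$ in the $\Sigma(2,3,5)$ case is unnecessary, since the standing hypothesis of \S\ref{sec:free} is that $p\nmid a_1\cdots a_r$.
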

\begin{proof}
By Lemma \ref{lem:rk}, we only need to consider the case that $r=3$ and up to reordering $(a_1,a_2,a_3)$ is one of:
\[
(2,3,5), (2,3,7), (2,3,11), (2,3,13), (2,3,17), (2,5,7), (2,5,9), (3,4,5), (3,4,7).
\]
In the case $Y = \Sigma(2,3,5)$ we have that ${\rm rk}( HF_{red}^+(-Y) ) = 0$ and $N < 0$. Hence we also have ${\rm rk}( HF_{red}^+(-Y_0) ) = 0$ by Proposition \ref{prop:deltap}.

In the remaining cases we have ${\rm rk}( HF_{red}^+(-Y) ) > 0$ and $N > 0$. Hence if $p > N$ then ${\rm rk}( HF_{red}^+(-Y) ) > {\rm rk}( HF_{red}^+(-Y_0) ) = 0$, by Proposition \ref{prop:deltap}. So we only need to consider primes such that $p \le N$ and coprime to $a_1, a_2, a_3$. This leaves only finitely many cases of $4$-tuples $\{ (a_1,a_2,a_3, p) \}$ to consider. We check each of these cases by directly computing the ranks of $HF_{red}^+(-Y)$ and $HF_{red}^+(-Y_0)$. The results are shown in Table \ref{fig}. By inspection we see that ${\rm rk}( HF_{red}^+(-Y) ) > {\rm rk}( HF_{red}^+( -Y_0 , \mathfrak{s}_0 ))$ in all cases except for $Y = \Sigma(2,3,11)$ and $p=5$.

\end{proof}

\begin{theorem}
Let $Y = \Sigma(a_1 , a_2 , \dots , a_r)$ be a Brieskorn homology sphere and let $p$ be a prime not dividing $a_1 \cdots a_r$. Then $\delta_\infty^{(p)}(Y) > \delta(Y)$ except in the following cases:
\begin{itemize}
\item[(1)]{$Y = \Sigma(2,3,5)$ and $p$ is any prime.}
\item[(2)]{$Y = \Sigma(2,3,11)$ and $p=5$.}
\end{itemize}
In both cases we have $\delta_\infty^{(p)}(Y) = \delta(Y) = 1$.
\end{theorem}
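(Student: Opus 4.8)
The plan is to read this off directly from Theorem~\ref{thm:free} and the rank comparison in Theorem~\ref{thm:free2}. Recall that Theorem~\ref{thm:free} asserts, for \emph{every} spin$^c$-structure $\mathfrak{s}_0$ on $Y_0 = Y/\mathbb{Z}_p$, the identity
\[
\delta^{(p)}_\infty(Y) - \delta(Y) = \mathrm{rk}\big(HF^+_{red}(Y)\big) - \mathrm{rk}\big(HF^+_{red}(Y_0 , \mathfrak{s}_0)\big).
\]
In particular the right-hand side is independent of $\mathfrak{s}_0$, and it is non-negative (e.g.\ since $\delta^{(p)}_\infty(Y) = \delta^{(p)}_0(Y) \ge \delta(Y)$ by Proposition~\ref{prop:deltaprop}(2) and property~(1) of the $\delta$-invariants). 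Hence $\delta^{(p)}_\infty(Y) > \delta(Y)$ exactly when $\mathrm{rk}(HF^+_{red}(Y)) > \mathrm{rk}(HF^+_{red}(Y_0 , \mathfrak{s}_0))$ for one (equivalently, any) choice of $\mathfrak{s}_0$, and $\delta^{(p)}_\infty(Y) = \delta(Y)$ exactly when these two ranks agree.

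First I would reconcile the two theorems with respect to orientation. Theorem~\ref{thm:free2} is phrased for $-Y$ and $-Y_0$ (with $\mathfrak{s}_0$ the restriction of the canonical spin$^c$-structure of the negative-definite plumbing bounded by $Y_0$), but the rank of reduced Heegaard Floer homology is insensitive to reversing orientation and to conjugation of spin$^c$-structures: $\mathrm{rk}\,HF^+_{red}(Y) = \mathrm{rk}\,HF^+_{red}(-Y)$, since $HF^+_{red}(-Y) \cong HF^-_{red}(-Y) \cong HF^+_{red}(Y)$ up to grading shifts, and likewise for $Y_0$ with $\mathfrak{s}_0$. Combined with the remark above that $\mathrm{rk}\,HF^+_{red}(Y_0,\mathfrak{s}_0)$ does not depend on $\mathfrak{s}_0$, the conclusion of Theorem~\ref{thm:free2} is precisely that $\mathrm{rk}\,HF^+_{red}(Y) > \mathrm{rk}\,HF^+_{red}(Y_0 , \mathfrak{s}_0)$ for all $Y$ and $p$ except $Y = \Sigma(2,3,5)$ (any admissible $p$) and $(Y,p) = (\Sigma(2,3,11),5)$, and that in these two exceptional cases the two ranks coincide (equal to $0$, resp.\ $1$). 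Feeding this into the displayed identity gives $\delta^{(p)}_\infty(Y) > \delta(Y)$ outside the exceptional cases and $\delta^{(p)}_\infty(Y) = \delta(Y)$ in them.

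It remains to evaluate $\delta(Y)$ in the two exceptional cases, and show it equals $1$. For $Y = \Sigma(2,3,5)$ this is classical: $-Y$ bounds the negative-definite $E_8$-plumbing and $d(-Y) = -2$, so $\delta(Y) = d(Y)/2 = 1$. Alternatively, and uniformly for both cases, I would use Proposition~\ref{prop:deltaprop}(1), which gives $\delta(Y) = -\,\mathrm{rk}\,HF^+_{red}(Y) - \lambda(Y)$; here $\mathrm{rk}\,HF^+_{red}(Y)$ is $0$ for $\Sigma(2,3,5)$ and $1$ for $\Sigma(2,3,11)$ by Theorem~\ref{thm:free2}, while $(\ref{equ:kap})$ gives $\lambda(\Sigma(2,3,5)) = -1$ and $\lambda(\Sigma(2,3,11)) = -2$ (one computes $\kappa(2,3,5)=0$ and $\kappa(2,3,11)=1$). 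In both cases this yields $\delta(Y) = 1$, hence $\delta^{(p)}_\infty(Y) = \delta(Y) = 1$.

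Since the entire content of the statement is carried by Theorems~\ref{thm:free} and~\ref{thm:free2}, there is no genuine obstacle to this proof; the only points requiring care are the orientation/spin$^c$ bookkeeping of the second paragraph and the small computation $\delta(Y)=1$ of the third. I would also note in passing that the exceptional cases have $\delta(Y) = 1 \neq 0$, so they cannot bound rational homology $4$-balls, which is why Corollary~\ref{cor:ne} is unaffected by them.
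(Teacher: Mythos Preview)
Your proposal is correct and follows essentially the same approach as the paper: combine Theorem~\ref{thm:free} with Theorem~\ref{thm:free2} to get the strict inequality outside the two exceptional cases, and then verify $\delta(Y)=1$ there. The only minor difference is that the paper obtains $\delta(Y)=1$ by citing \cite[\S 8]{os}, whereas you compute it via Proposition~\ref{prop:deltaprop}(1) and the Casson invariant; your orientation/spin$^c$ bookkeeping is a helpful elaboration but not a departure from the paper's argument.
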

\begin{proof}
Theorems \ref{thm:free} and \ref{thm:free2} imply that $\delta_\infty^{(p)}(Y) \ge \delta(Y)$ with equality only in the cases listed. In these cases, we have $\delta_\infty^{(p)}(Y) = \delta(Y)$ and $\delta(Y) = 1$ \cite[\textsection 8]{os}.
\end{proof}



\bibliographystyle{amsplain}

\end{document}